\documentclass[a4paper,11pt,reqno]{amsart}

\usepackage[utf8]{inputenc}
\usepackage[margin=1in]{geometry}

\usepackage{amsmath}
\usepackage{amssymb}
\usepackage{amscd}
\usepackage{mathtools}
\usepackage{mathrsfs}

\usepackage{bbm}

\usepackage{graphicx}
\usepackage{tikz}
\usetikzlibrary{positioning,arrows.meta,shapes.geometric,calc}

\usepackage{enumitem}

\usepackage{color}

\usepackage[colorlinks=true, urlcolor=blue, citecolor=blue, linkcolor=blue, linktocpage, pdfpagelabels, bookmarksnumbered, bookmarksopen]{hyperref}
\usepackage[margin=1in]{geometry}
\usepackage{tikz}
\usetikzlibrary{positioning,arrows.meta,shapes.geometric,calc}
\usepackage{caption}

\newtheorem{theorem}{Theorem}[section]
\newtheorem{lemma}[theorem]{Lemma}
\newtheorem{proposition}[theorem]{Proposition}
\newtheorem{corollary}[theorem]{Corollary}

\theoremstyle{definition}
\newtheorem{definition}[theorem]{Definition}
\newtheorem{remark}[theorem]{Remark}

\numberwithin{equation}{section}

\allowdisplaybreaks

\renewcommand{\d}{\mathrm{d}}
\renewcommand{\div}{\operatorname{div}}

\newcommand{\loc}{\mathrm{loc}}

\newcommand{\R}{\mathbb{R}}
\newcommand{\N}{\mathbb{N}}
\newcommand{\M}{\mathbb{M}}
\newcommand{\Po}{\mathbb{P}}
\newcommand{\T}{\mathcal{T}}

\title{$H$-convergence and $\Gamma$-convergence in the Riesz fractional setting: the nonlinear case}

\author[G.\ C.\ Brusca]{Giuseppe C.\ Brusca}
\address[G.\ C.\ Brusca]{
\newline\indent Scuola Internazionale Superiore di Studi Avanzati (SISSA)
\newline\indent via Bonomea 265, Trieste, Italy}
\email{gbrusca@sissa.it}

\author[M.\ Caponi]{Maicol Caponi}
\address[M.\ Caponi]{
\newline\indent Dipartimento di Ingegneria e Scienze dell'Informazione e Matematica (DISIM)
\newline\indent Università degli Studi dell'Aquila 
\newline\indent via Vetoio 1, Coppito, 67100 L'Aquila, Italy}
\email{maicol.caponi@univaq.it}

\author[A.\ Carbotti]{Alessandro Carbotti}
\address[A.\ Carbotti]{
\newline\indent Dipartimento di Matematica e Fisica ``Ennio De Giorgi''
\newline\indent Università del Salento
\newline\indent via Per Arnesano, 73100 Lecce, Italy}
\email{alessandro.carbotti@unisalento.it}

\author[A.\ Maione]{Alberto Maione}
\address[A.\ Maione]{
\newline\indent Dipartimento di Matematica
\newline\indent Politecnico di Milano
\newline\indent Campus Leonardo, Edificio 14 ``Nave", via Edoardo Bonardi 9, 20133 Milano,  Italy}
\email{alberto.maione@polimi.it}

\author[F.\ Paronetto]{Fabio Paronetto}
\address[F.\ Paronetto]{
\newline\indent Dipartimento di Matematica ``Tullio Levi Civita''
\newline\indent Università di Padova
\newline\indent via Trieste 63, 35121 Padova, Italy}
\email{fabio.paronetto@unipd.it}

\keywords{$H$-convergence, $\Gamma$-convergence, fractional operators, monotone operators, Riesz potential}

\subjclass[2020]{(Primary) 35B40, (Secondary) 35J60, 35R11, 47H05, 49J45}

\begin{document}


\begin{abstract}
This paper concerns the $H$-convergence of nonlinear nonlocal monotone operators defined through the Riesz fractional gradient and divergence. We show that the $H$-convergence in this nonlocal framework is equivalent to the $H$-convergence of the corresponding local one. As a consequence, we obtain a $H$-compactness result for a suitable class of nonlocal monotone operators. We then study the $\Gamma$-convergence of nonlocal energy functionals associated with the subclass of conservative monotone operators, proving that it is equivalent to the $\Gamma$-convergence of the corresponding local energies. A key ingredient is a new uniqueness result for the integral representation of both local and nonlocal functionals. As a by-product, we obtain the $\Gamma$-compactness of the class of nonlocal energies under consideration. Finally, we show the equivalence between the $H$-convergence of nonlocal conservative monotone operators and the $\Gamma$-convergence of the associated energy functionals.
\end{abstract}

\maketitle




\section{Introduction}
$H$-convergence is a mathematical tool for the homogenisation of composite materials and dates back to the late 1960s with the pioneering works of Spagnolo~\cite{Spagnolo, Spagnolo67, Spagnolo68} and De Giorgi-Spagnolo~\cite{DGS}, where the linear symmetric case was studied under the name of $G$-convergence, extended by Tartar~\cite{Tartar0, tartar-murat} to the possibly non-symmetric setting; see also~\cite{MuratI,Tartar} for a modern account.
The theory considers sequences of measurable matrix-valued functions $B_h$ satisfying the following standard uniform ellipticity and boundedness assumption
\begin{equation}\label{eq:standard_growth}
    \lambda |\xi|^2 \le B_h(x)\xi\cdot\xi \le \Lambda |\xi|^2,
\end{equation}
and focuses on the asymptotic behaviour of the solutions of the family of Dirichlet problems
\begin{equation}\label{eq:Lh}
\begin{cases}
-\div(B_h(x)\nabla w_h(x)) = g(x) & \text{in } \Omega, \\
w_h(x) = 0 & \text{on } \partial\Omega,
\end{cases}
\end{equation}
by identifying a suitable measurable matrix-valued function $B_\infty$, called the $H$-limit, such that the solutions $w_h$ converge to the solution $w_\infty$ of the Dirichlet problem
\begin{equation*}
\begin{cases}
-\div(B_\infty(x)\nabla w_\infty(x)) = g(x) & \text{in } \Omega, \\
w_\infty(x) = 0 & \text{on } \partial\Omega.
\end{cases}
\end{equation*}
A central question was whether the class of matrix-valued functions satisfying~\eqref{eq:standard_growth} is relatively closed under $H$-convergence, namely that every $H$-limit belongs to the same class.

After the linear case had been thoroughly understood, 
several extensions to nonlinear settings were considered in~\cite{CPDMDF,DASC,FM86,FM87,Maione,MPV,Tartar}. 
The authors introduced different classes of monotone operators, depending on the structural assumptions adopted to extend~\eqref{eq:standard_growth} to the nonlinear setting.
Some of these classes are closed under $H$-convergence (see, e.g.,~\cite{CPDMDF,Tartar}), whereas for others the $H$-limits are known only to belong to a larger class (see e.g.~\cite{DASC}).

In the fractional setting, several works have recently addressed the study of $H$-convergence of nonlocal linear operators; we refer in particular to the contributions~\cite{bbcejw,bpw,CCM25,FBRS17,Waurick,Waurick2}.
In~\cite{FBRS17}, the authors consider scalar perturbations of the Gagliardo seminorm, while in~\cite{Waurick,Waurick2} a topological technique is introduced to approach the nonlocal setting in the Hilbertian case.
In~\cite{CCM25}, instead, the authors study $H$-convergence for the following sequence of nonlocal Dirichlet problems in fractional divergence form
\begin{equation*}
\begin{cases}
-\div^s(B_h(x)\nabla^s u_h(x)) = f(x) & \text{in } \Omega, \\
u_h(x) = 0 & \text{in } \mathbb{R}^n \setminus \Omega.
\end{cases}
\end{equation*}
The operators $\nabla^s$ and $\div^s$ are the so-called Riesz fractional gradient and fractional divergence, introduced in~\cite{ShiehSpector15,ShiehSpector18} and extensively studied in, e.g.,~\cite{BCCS22,ComiStefani19,ComiStefani22,ComiStefani23,Stefani}. 
They make it possible to formulate the problems above in fractional divergence form and satisfy the identity $-\operatorname{div}^s\nabla^s=(-\Delta)^{s}$, so that the operators $-\div^s(B_h\nabla^s\,)$ may be viewed as perturbations of the fractional Laplacian.
The main result of~\cite{CCM25}, Theorem~3.1, shows that the class of possibly non-symmetric matrices satisfying~\eqref{eq:standard_growth} in $\Omega$ and sharing a common extension outside $\Omega$ is closed under $H$-convergence. 
More recently, in~\cite{bbcejw}, the authors address some of the open questions raised in~\cite[Section~6]{CCM25}.
In particular, they obtain $H$-compactness without prescribing the matrices outside $\Omega$, which was a technical requirement in~\cite{CCM25}, and include the one-dimensional case $n=1$.
They also observe that, in this Hilbertian setting, the results obtained in~\cite{CCM25} can be alternatively deduced from the topological notion of nonlocal $H$-convergence, introduced in~\cite{Waurick}.

In this paper, we address another open problem raised in~\cite{CCM25}, which is the $H$-convergence of nonlinear nonlocal monotone operators in fractional divergence form. 
More precisely, we identify a class of monotone maps that is closed under $H$-convergence, in the spirit of~\cite{CPDMDF,Tartar}.
We consider sequences of maps $A_h\colon\mathbb{R}^n\times \mathbb{R}^n\to \mathbb{R}^n$ characterised by the following lower bound
\begin{align*}
    (A_h(x,\xi) - A_h(x,\eta)) \cdot (\xi - \eta) &\ge \lambda |\xi - \eta|^p
\end{align*}
together with the growth condition, introduced following~\cite{CPDMDF},
\begin{align*}
    |A_h(x,\xi) - A_h(x,\eta)| &\le \Lambda \left[1 + A_h(x,\xi)\cdot\xi + A_h(x,\eta)\cdot\eta\right]^{\frac{p-2}{p}}\left[(A_h(x,\xi) - A_h(x,\eta)) \cdot (\xi - \eta)\right]^{\frac{1}{p}}.
\end{align*}
In the linear case $A_h(x,\xi)=B_h(x)\xi$, these assumptions are consistent with those considered in~\cite{CCM25}, and in the nonlinear Hilbertian setting $p=2$ they reduce to the bounds introduced in~\cite[Chapter~11]{Tartar}.
Instead, for $p>2$, they differ from the commonly adopted assumptions, such as those in~\cite{DASC}, under which the $H$-limits belong only to a larger class of monotone maps. 
The nonlinear nonlocal Dirichlet problems considered in this paper are of fractional divergence type and take the form
\begin{equation*}
\begin{cases}
-\div^s(A_h(x,\nabla^s u_h(x))) = f(x) & \text{in } \Omega, \\
u_h(x) = 0 & \text{in } \mathbb{R}^n \setminus \Omega.
\end{cases}
\end{equation*}
Unlike the linear case treated in~\cite{bbcejw}, we still assume the maps $A_h$ to be prescribed outside $\Omega$, as in~\cite{CCM25}.
Indeed, in the linear setting, for every weakly converging sequence $u_h$ in $H^{s,p}_0(\Omega)$ \cite[Lemma~2.12]{KrSc22} ensures the strong convergence of the fractional gradients $\nabla^s u_h$ outside $\Omega$.
This allows one to pass to the limit in the products $B_h\nabla^s u_h$ under the weak* convergence of the matrices $B_h$.
In contrast, in the nonlinear case, the weak* convergence of the maps $A_h$ is not sufficient to pass to the limit in the nonlinear compositions $A_h(\,\cdot,\,\nabla^s u_h)$, even in the presence of strong convergence of the fractional gradients.
The problem of removing the exterior prescription therefore remains open.

The first main result of this paper is Theorem~\ref{thm:Hs}, which establishes the equivalence between local and nonlocal $H$-convergence, the latter in the sense of Definition~\ref{def:nonlocal_H-convergence}, for the sequences of monotone operators
\[
\mathcal{A}^s_h\coloneqq-\div^s(A_h(\,\cdot\,,\nabla^s\,)),\quad \mathcal{A}_h\coloneqq-\div(A_h(\,\cdot\,,\nabla\,)),
\]
respectively associated with the local and nonlocal Dirichlet problems
\begin{equation*}
\begin{cases}
-\div^s(A_h(x,\nabla^s u_h(x))) = f(x) & \text{in } \Omega, \\
u_h(x) = 0 & \text{in } \mathbb{R}^n \setminus \Omega,
\end{cases}\quad 
\begin{cases}
-\div(A_h(x,\nabla w_h(x))) = f(x) & \text{in $\Omega$}, \\
w_h(x) = 0 & \text{on $\partial\Omega$}.
\end{cases}
\end{equation*}
As a consequence, the $H$-compactness of the class of monotone maps in the local setting, recalled in Proposition~\ref{prop:Hcomploc}, immediately yields the nonlocal $H$-compactness result, stated in Corollary~\ref{cor:nonlocal_H-compactness}.

From a technical viewpoint, the proof of Theorem~\ref{thm:Hs} is inspired by some ideas developed in~\cite{CuKrSc23,KrSc22}, which exploit the relation between local and fractional gradients; see Proposition~\ref{prop:equiv}.
This allows one to transfer local $H$-convergence results to the nonlocal setting and to construct a nonlocal $H$-limit candidate.
The main difficulty lies in handling mixed local–nonlocal terms involving both $\nabla^s u_h$ and $\nabla w_h$, and in passing to the limit. 
To overcome this issue, we rely on an integration-by-parts formula together with compactness arguments. Similar local-to-nonlocal arguments also appear in the study of div-curl formulas in the fractional setting; see~\cite[Section~3.2]{Caponi26}. 

The proof of Theorem~\ref{thm:Hs} is completed by establishing a uniqueness result for nonlocal $H$-limits (Lemma~\ref{lem:uniqueness}), analogous to the classical local one.
As a consequence, the nonlocal $H$-limit candidate obtained from the local theory is uniquely determined. We emphasise that Lemma~\ref{lem:uniqueness} is not entirely straightforward.
In fact, in the local setting, uniqueness is obtained through localization techniques, which do not naturally extend to the nonlocal framework. 
Moreover, the classical construction of affine test functions $\varphi_{x_0,\xi}$ satisfying
\[
\nabla \varphi_{x_0,\xi} = \xi
\]
in a neighbourhood of a point $x_0\in\Omega$, for a prescribed vector $\xi\in\mathbb{R}^n$, becomes considerably more delicate in the nonlocal setting; see Remark~\ref{rem:uniqueness}.

In the second part of this work, we investigate the relation between $H$- and $\Gamma$-convergence in the nonlinear nonlocal setting. 
From a historical perspective, 
this connection already emerged in the pioneering work of De Giorgi and Spagnolo~\cite{DGS}, where the energy functionals play a central role in the analysis of homogenisation problems.
As observed in~\cite{degiorgi2}, these ideas anticipated several aspects of the theory of $\Gamma$-convergence, which was introduced by De Giorgi and Franzoni in~\cite{degiorgi-fran} a few years later as a unified variational theory.
The link between $H$- and $\Gamma$-convergence lies in the fact that each PDE in~\eqref{eq:Lh}, associated with \emph{symmetric} matrices, can be interpreted as the Euler-Lagrange equation of the energy functional
\begin{equation*}
\frac{1}{2}\int_{\Omega}B_h(x)\nabla w_h(x)\cdot \nabla w_h(x)\, \d x-\int_\Omega g(x) w_h(x)\,\d x.
\end{equation*}
Under suitable assumptions, $\Gamma$-convergence implies convergence of minimisers, that is solutions to~\eqref{eq:Lh}; conversely, $H$-convergence implies $\Gamma$-convergence.
We refer the interested reader to~\cite[Theorem~13.12]{DalMaso} for a proof of the equivalence between $H$- and $\Gamma$-convergence in the linear local case, and to~\cite[Theorem~5.1]{CCM25} for the recent extension to the linear nonlocal case.

In the nonlinear setting, the situation becomes more delicate.
Indeed, one first needs to identify a natural subclass of monotone maps that can be associated with energy functionals, in the same way that symmetric matrices do in the linear theory.
For this reason, in Definition~\ref{def:class_conservative_fields} we introduce the class of \emph{conservative} monotone maps $A_h\colon\mathbb{R}^n\times\mathbb{R}^n\to\mathbb{R}^n$, i.e., monotone maps satisfying the former growth conditions and for which there exist potentials $\varphi_h\colon \mathbb{R}^n\times\R^n\to\R$ such that
\[
A_h(x,\xi)=\nabla_\xi \varphi_h(x,\xi).
\]
We emphasise that, in the linear case, conservativity is equivalent to symmetry; see Remark~\ref{rem:conservative_symmetric}.
We then study the $\Gamma$-convergence of the nonlocal energies defined as
\begin{equation*}
F^s_h(u)\coloneqq 
\begin{cases}
\displaystyle \int_{\R^n}\varphi_h(x,\nabla^s u(x))\, \d x&\quad\text{if $u\in H^{s,p}_0(\Omega)$},\\
\infty&\quad\text{if $u\in L^p(\R^n)\setminus H^{s,p}_0(\Omega)$}.
\end{cases}
\end{equation*}
In analogy with the previous result for the $H$-convergence, in Theorem~\ref{thm:Gammas} we show that the $\Gamma$-convergence of $F^s_h$ is equivalent to the $\Gamma$-convergence of the associated local energies 
\begin{equation*}
F_h(u)\coloneqq 
\begin{cases}
\displaystyle \int_\Omega\varphi_h(x,\nabla u(x))\,\d x & \quad \text{if } u\in W_0^{1,p}(\Omega),\\
\infty & \quad \text{if $u\in L^p(\Omega)\setminus W_0^{1,p}(\Omega)$}.
\end{cases}
\end{equation*}
As a by-product, we obtain the $\Gamma$-compactness of the class of nonlocal energies associated with conservative monotone operators; see Corollary~\ref{cor:Gamma-compactness}.

We emphasise that Theorem~\ref{thm:Gammas} is not straightforward. 
The implication \emph{from local to nonlocal $\Gamma$-convergence} (Proposition~\ref{prop:gamma1}) is inspired by~\cite{CuKrSc23}, where an analogous result was established for nonlocal energies associated with fractional gradients of finite interaction horizon.
However, since that framework differs substantially from the present one, where the Riesz fractional gradient involves long-range interactions, our result cannot be deduced directly from~\cite{CuKrSc23}. 
Our proof is based on revisiting the argument of~\cite{CuKrSc23} and combining it with the techniques developed in~\cite{KrSc22} for the Riesz fractional gradient in the context of relaxation. We refer the interested reader to~\cite{ABSS,ACFS,arroyo,BCM21} for other results on $\Gamma$-convergence of functionals involving Riesz fractional gradients. In addition, the proof of the converse implication \emph{from nonlocal to local $\Gamma$-convergence} (Proposition~\ref{prop:gamma2}) is based on the local $\Gamma$-compactness result stated in Corollary~\ref{cor:local_Gamma-compactness}, which can be deduced from the results in~\cite{ADMZ}, and a uniqueness result for the integral representation of nonlocal functionals, which is established in Proposition~\ref{prop:uniq_rapp}, as a consequence of Lemma~\ref{lem:uniq_rappr}. 
To the best of our knowledge, this uniqueness result does not appear in the literature, even in the classical local setting.

In Theorem~\ref{thm:Hs-Gammas}, we finally show the equivalence between the $H$-convergence of nonlocal conservative monotone operators and the $\Gamma$-convergence of the associated energy functionals.
The proof combines Theorem~\ref{thm:Hs}, Theorem~\ref{thm:Gammas}, and Proposition~\ref{prop:H-Gamma}.
The latter is a consequence of some results in~\cite{ADMZ}, and shows this equivalence in the local setting. 
As a further consequence, the class of conservative monotone operators is also closed under $H$-convergence.

Summarising, the results of this paper show that both $H$- and $\Gamma$-convergence in the nonlinear nonlocal setting can be entirely characterised by their local counterparts, and vice versa.
In the spirit of~\cite{CCM25}, such relations are summarised in the following commutative diagram.

\begin{figure}[ht]
\centering

\begin{tikzpicture}[
    scale=0.85,
    transform shape,
    >=stealth,
    thick,
    box/.style={
        rectangle,
        draw,
        rounded corners,
        fill=gray!20,
        text width=4cm,
        align=center,
        minimum height=1.2cm
    }
]

\node (HLoc) [box]
{$H$-convergence of local operators};

\node (HNonloc) [box, right=4.5cm of HLoc]
{$H$-convergence of nonlocal operators};

\node (GammaLoc) [box, below=3cm of HLoc]
{$\Gamma$-convergence of local energies};

\node (GammaNonloc) [box, below=3cm of HNonloc]
{$\Gamma$-convergence of nonlocal energies};

\draw[double distance=1pt,<->]
(HLoc) -- node[above]
{Thm.~\ref{thm:Hs}}
(HNonloc);

\draw[double distance=1pt,<->]
(GammaLoc) -- node[below]
{Thm.~\ref{thm:Gammas}}
(GammaNonloc);

\draw[double distance=1pt,<->]
(HLoc) -- node[left, align=center]
{Prop.~\ref{prop:H-Gamma}\\ 
\footnotesize (conservative maps)}
(GammaLoc);

\draw[double distance=1pt,<->]
(HNonloc) -- node[right, align=center]
{Thm.~\ref{thm:Hs-Gammas}\\ 
\footnotesize (conservative maps)}
(GammaNonloc);

\end{tikzpicture}

\caption*{Relations between $H$- and $\Gamma$-convergence in the local and nonlocal settings.}
\end{figure}

The paper is structured as follows. Section~\ref{sec_2} introduces the nonlocal functional framework. 
In Section~\ref{sec:H-def}, we discuss in detail the class of monotone maps under consideration, comparing it with the most commonly used classes in the literature. 
We also extend the notion of $H$-convergence to the nonlinear nonlocal setting.
Section~\ref{sec:H-equiv} is devoted to the proof of Theorem~\ref{thm:Hs} and the resulting $H$-compactness of the class of nonlocal monotone operators, shown in Corollary~\ref{cor:nonlocal_H-compactness}.
In Section~\ref{sec_4}, we address the second main topic of the paper, namely the $\Gamma$-convergence of nonlocal energy functionals associated with conservative monotone maps.
Finally, Section~\ref{sec:Gamm-equiv} and Section~\ref{sect:equivGammaH} are respectively devoted to the proofs of Theorem~\ref{thm:Gammas} and Theorem~\ref{thm:Hs-Gammas}.


\section{Notation and preliminaries}\label{sec_2}

Let us introduce the notation considered in this paper. 
The set of positive natural numbers is denoted by $\N$, while $\N_0$ denotes the set of natural numbers including zero. 
For $x\in \R^n$ and $r>0$, we let $B_r(x)\subset \R^n$ denote the open ball of radius $r$ centered at $x$. 
Given a measurable set $E \subset\R^n$, we denote by $\mathbf{1}_E\colon\R^n\to\{0,1\}$ its characteristic function. 
Given two open sets $U, V \subset \R^n$, we write $V \subset\subset U$ if there exists a compact set $K$ such that $V \subset K \subset U$. 
Given an open set $U\subset\R^n$, the collection of all open subset $V\subset U$ is denoted by $\mathscr{A}(U)$. 
We use standard notation for Lebesgue and Sobolev spaces. For $p \in [1,\infty]$, we let $p'$ denote the Hölder's conjugate exponent of $p$, namely $p' \coloneqq \frac{p}{p-1}$.



Let $n\ge 1$ be a natural number and let $s\in(0,1)$.
Let us recall the notions of \emph{Riesz $s$-fractional gradient} and \emph{Riesz $s$-fractional divergence}, introduced in~\cite{ComiStefani19,ShiehSpector15,Silhavy20}. 
Following~\cite{Silhavy20}, we first introduce the space
\begin{align*}
\mathcal{T}(\R^n)\coloneqq\left\{\psi \in C^\infty(\R^n):\text{$\int_{\R^n} |\nabla^k\psi(x)|\,\d x < \infty$, $\nabla^k \psi(x) \to 0$ as $|x|\to\infty$ for every $k \in \N_0$}\right\}.
\end{align*}
Note that
\[
C_c^\infty(\R^n) \subset \T(\R^n)
\subset L^1(\R^n)\cap L^\infty(\R^n).
\]

\begin{definition}
Let $s\in (0,1)$. For $\psi \in \T(\R^n)$ and $\Psi \in \T(\R^n;\R^n)$ the \emph{Riesz $s$-fractional gradient}
$\nabla^s\psi \colon \R^n \to \R^n$ and the
\emph{Riesz $s$-fractional divergence}
$\div^s\Psi \colon \R^n \to \R$ are defined, respectively, by
\begin{align*}
\nabla^s \psi(x) &\coloneqq \mu_{n,s} \int_{\R^n} \frac{(\psi(y)-\psi(x))(y-x)}{|y-x|^{n+s+1}} \,\d y\quad\text{for $x \in \R^n$},\\
\div^s \Psi(x) &\coloneqq \mu_{n,s} \int_{\R^n} \frac{(\Psi(y)-\Psi(x))\cdot (y-x)}{|y-x|^{n+s+1}} \,\d y\quad\text{for $x \in \R^n$}.
\end{align*}
The positive normalization constant $\mu_{n,s}$, whose explicit value only plays a minor role in this paper, depends only on $n$ and $s$. 
We refer to~\cite[Eq.~(4)]{Silhavy20} for its precise definition.  
\end{definition}

The nonlocal operators $\nabla^s$ and $\div^s$ are well defined, in the sense that the above integrals converge for every $x \in \R^n$. 
Moreover, as observed in~\cite[Proposition~5.2]{Silhavy20}, 
\[
\nabla^s \psi \in \T(\R^n;\R^n)\quad\text{and}\quad\div^s \Psi \in \T(\R^n).
\]
We stress that, even if $\psi \in C_c^\infty(\R^n)$ and $\Psi \in C_c^\infty(\R^n;\R^n)$, the functions $\nabla^s \psi$ and $\div^s \Psi$ do not have compact support.

The fractional gradient and divergence are closely related to the classical fractional Laplacian, which we recall below.

\begin{definition}
Let $s\in (0,1)$. For $\psi \in \T(\R^n)$, the {\em $\frac{s}{2}$-fractional Laplacian} $(-\Delta)^\frac{s}{2}\psi\colon\R^n\to\R$ is defined by
\begin{equation*}
(-\Delta)^\frac{s}{2}\psi(x)\coloneqq \displaystyle c_{n,s}\int_{\mathbb{R}^n}\frac{\psi(x)-\psi(y)}{|x-y|^{n+s}}\,{\rm d}y\quad\text{for $x\in\R^n$},
\end{equation*}
where $c_{n,s}$ is a positive normalization constant depending only on $n$ and $s$; see~\cite{Silhavy20} for its explicit definition (note that $c_{n,s}$ coincides with $-\nu_{n,s}$ in Eq.~(6) therein).  
\end{definition}

The fractional Laplacian $(-\Delta)^\frac{s}{2}\psi$ is well defined, in the sense that the above integral converges for every $x \in \R^n$. 
Moreover, as observed in~\cite[Proposition~5.2]{Silhavy20}, we have
\[
(-\Delta)^\frac{s}{2}\psi \in \T(\R^n).
\]
By~\cite[Theorem~5.3]{Silhavy20}, the following relation holds for every $\psi \in \T(\mathbb{R}^n)$ and $s,r\in (0,1)$
\[
-\operatorname{div}^s\nabla^r\psi=(-\Delta)^{\frac{s+r}{2}}\psi\quad\text{in $\mathbb{R}^n$}.
\]

Let us introduce the functional framework.

\begin{definition}
Let $\Omega \subset \R^n$ be an open set, $s \in (0,1)$, and $p \in [1,\infty)$. We define the \emph{Riesz fractional Sobolev space} $H_0^{s,p}(\Omega)$ as the closure of  $C_c^\infty(\Omega)$ with respect to the norm
\[
\|\cdot\|_{H_0^{s,p}(\Omega)}\coloneqq\left(\|\cdot\|_{L^p(\Omega)}^p+\|\nabla^s\cdot\|_{L^p(\R^n)}^p\right)^\frac{1}{p}.
\]
We denote by $H^{-s,p'}(\Omega)$ the dual space of $H_0^{s,p}(\Omega)$, that is $H^{-s,p'}(\Omega)\coloneqq (H^{s,p}_0(\Omega))'$. When $\Omega = \R^n$, we simply write $H^{s,p}(\R^n) \coloneqq H_0^{s,p}(\R^n)$.
\end{definition}

The space $H_0^{s,p}(\Omega)$ endowed with the norm $\|\,\cdot\,\|_{H_0^{s,p}(\Omega)}$ is a separable Banach space, and it is reflexive when $p \in (1,\infty)$. Moreover, when $\Omega = \R^n$, it coincides with the fractional Sobolev space $S^{s,p}(\R^n)$ introduced in~\cite[Definition~2.5]{KrSc22}; see~\cite[Theorem~2.7]{KrSc22}. 
Furthermore, as observed in~\cite[Definition~2.9]{KrSc22}, a sequence $(u_j)_j$ converges weakly in $H_0^{s,p}(\Omega)$ if and only if $(u_j)_j$ converges weakly in $L^p(\Omega)$ and $(\nabla^s u_j)_j$ converges weakly in $L^p(\R^n;\R^n)$.

The following result provides a useful connection between the Sobolev spaces $W^{1,p}_{\loc}(\R^n)$ and $H^{s,p}(\R^n)$. For the proof, we refer the interested reader to~\cite[Proposition~3.1]{KrSc22}.

\begin{proposition}\label{prop:equiv}
Let $s\in (0,1)$ and $p\in [1,\infty)$. Then the following two statements hold.
\begin{itemize}
\item[(i)] For every $u \in H^{s,p}(\R^n)$, there exists $v \in W^{1,p}_{\rm loc}(\R^n)$ such that
\[
\nabla v = \nabla^s u\quad\text{in $\R^n$}.
\]
\item[(ii)] For every $v \in W^{1,p}(\R^n)$, the function $u\coloneqq(-\Delta)^\frac{1-s}{2}v\in H^{s,p}(\R^n)$ satisfies
\[
\nabla^s u = \nabla v\quad\text{in $\R^n$}
\] 
and
\begin{equation}\label{eq:frac-lap-est}
\|u\|_{L^p(\R^n)} \le C \|v\|_{L^p(\R^n)}^s  \|\nabla v\|_{L^p(\R^n)}^{1-s},
\end{equation}
for a constant $C>0$ depending only on $n$ and $s$.
\end{itemize}
\end{proposition}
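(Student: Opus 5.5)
The plan is to exploit the Fourier-multiplier identity $\nabla^s = \nabla (-\Delta)^{-\frac{1-s}{2}}$, equivalently $\nabla^s u = \nabla I_{1-s} u$, where $I_{1-s}$ denotes the Riesz potential of order $1-s$. Both statements then reduce to mapping properties of Riesz potentials and of the fractional Laplacian. We first argue on a dense class, $\T(\R^n)$ or $C_c^\infty(\R^n)$, and then extend by density and closedness of the gradient.

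For (ii), take first $v\in C_c^\infty(\R^n)$ and set $u=(-\Delta)^{\frac{1-s}{2}}v$, which lies in $\T(\R^n)$ by~\cite[Proposition~5.2]{Silhavy20}. The relation $\nabla^s u=\nabla v$ follows from the composition rules in~\cite{Silhavy20}. On the Fourier side, $\widehat{\nabla^s u}(\xi)=i\xi|\xi|^{s-1}\hat u(\xi)=i\xi|\xi|^{s-1}|\xi|^{1-s}\hat v(\xi)=\widehat{\nabla v}(\xi)$. Rigorously, one writes $\nabla^s=\nabla^{s}$ and $(-\Delta)^{\frac{1-s}{2}}$ as convolution-type operators that commute on $\T$.

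For the estimate~\eqref{eq:frac-lap-est}, split the singular integral defining $(-\Delta)^{\frac{1-s}{2}}v(x)$ at a radius $R$:
\[
(-\Delta)^{\frac{1-s}{2}}v(x)=c\int_{|h|<R}\frac{v(x)-v(x+h)}{|h|^{n+1-s}}\,\d h+c\int_{|h|\ge R}\frac{v(x)-v(x+h)}{|h|^{n+1-s}}\,\d h .
\]
In the near part, write $v(x)-v(x+h)=-\int_0^1\nabla v(x+th)\cdot h\,\d t$. Minkowski's inequality then gives an $L^p$ bound $C R^{s}\|\nabla v\|_{L^p}$, since $\int_{|h|<R}|h|^{-n+s}\,\d h\sim R^s$. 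In the far part, use $\|v(\cdot)-v(\cdot+h)\|_{L^p}\le 2\|v\|_{L^p}$ to get a bound $CR^{s-1}\|v\|_{L^p}$. Optimising with $R=\|v\|_{L^p}/\|\nabla v\|_{L^p}$ yields $C\|v\|_{L^p}^s\|\nabla v\|_{L^p}^{1-s}$. Since $(-\Delta)^{\frac{1-s}{2}}$ is of order $1-s<1$, the symmetrisation normally needed for orders $\ge1$ is unnecessary. The same interpolation gives $\|\nabla^s u\|_{L^p}=\|\nabla v\|_{L^p}$ on the dense class. For general $v\in W^{1,p}(\R^n)$, approximate by $v_k\in C_c^\infty$ in $W^{1,p}$. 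The estimate shows that $u_k$ is Cauchy in $L^p$ and $\nabla^s u_k=\nabla v_k$ is Cauchy in $L^p$, so $u_k\to u$ in $H^{s,p}(\R^n)$ with $\nabla^s u=\nabla v$. Here the limit $u$ agrees with $(-\Delta)^{\frac{1-s}{2}}v$ defined by the integral, for instance via convergence in the sense of distributions.

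For (i), take $u_k\in C_c^\infty(\R^n)$ with $u_k\to u$ in $H^{s,p}$. Define $v_k\coloneqq I_{1-s}u_k$. The Riesz potential of a compactly supported smooth function is smooth, with decay $|x|^{1-s-n}$ (locally integrable everywhere), and $\nabla v_k=\nabla^s u_k$ by~\cite[Theorem~5.3]{Silhavy20}-type composition. We must pass to the limit locally; this is the main obstacle, since $I_{1-s}$ is not bounded on $L^p$ and $v_k$ need not converge. The remedy is to renormalise: set $\tilde v_k\coloneqq v_k-\fint_{B_1}v_k$. On each ball $B_R$, Poincaré's inequality gives $\|\tilde v_k-\tilde v_j\|_{L^p(B_R)}\le C_R\|\nabla^s u_k-\nabla^s u_j\|_{L^p(\R^n)}$, after normalising on $B_1\subset B_R$. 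Hence $(\tilde v_k)$ is Cauchy in $W^{1,p}(B_R)$ for every $R$. A diagonal argument produces $v\in W^{1,p}_{\loc}(\R^n)$ with $\nabla v=\lim\nabla^s u_k=\nabla^s u$ in $L^p_{\loc}$, and hence a.e. in $\R^n$. The key point is that only gradients of the approximants need to converge globally, which is exactly what membership in $H^{s,p}$ provides.
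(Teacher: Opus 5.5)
Your proposal is correct. The paper gives no proof of its own here; it cites \cite[Proposition~3.1]{KrSc22}. According to the remark that follows, the cited proof takes $v=I_{1-s}u$ directly when $p<\frac{n}{1-s}$ and uses an approximation procedure only when $p\ge\frac{n}{1-s}$. Your proof of (i) runs the approximation uniformly for all $p\in[1,\infty)$: Riesz potentials of $C_c^\infty$ approximants, renormalised by their mean on $B_1$, with Poincaré's inequality on balls. This approach never needs $I_{1-s}u$ to make sense for the limit $u$ and avoids Hardy--Littlewood--Sobolev. The price is losing the explicit representation $v=I_{1-s}u$ and the global integrability of $v$ that comes with it, neither of which the statement requires. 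For (ii), splitting the kernel at $R=\|v\|_{L^p}/\|\nabla v\|_{L^p}$ gives a self-contained proof of \eqref{eq:frac-lap-est}, with a constant depending only on $n$ and $s$ as claimed.

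A few small points should be tightened, though none is a real gap. First, $u_k=(-\Delta)^{\frac{1-s}{2}}v_k$ lies in $\T(\R^n)$, while $H^{s,p}(\R^n)$ is defined as the closure of $C_c^\infty(\R^n)$. So before saying $u_k\to u$ in $H^{s,p}(\R^n)$ you must justify $\T(\R^n)\subset H^{s,p}(\R^n)$. One way is to cut off with $\chi(\cdot/R)$ and use the Leibniz formula, which holds pointwise for $\T$-functions. Then note $\|\nabla^s\chi(\cdot/R)\|_{L^\infty}=R^{-s}\|\nabla^s\chi\|_{L^\infty}$ and that the bound \eqref{eq:nabla_NL} also decays like $R^{-s}$. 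Alternatively, invoke $H^{s,p}(\R^n)=S^{s,p}(\R^n)$ from \cite[Theorem~2.7]{KrSc22}.

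Second, identifying the limit needs no distributional argument. Your splitting estimate holds verbatim for every $v\in W^{1,p}(\R^n)$, using $\|v(\cdot+h)-v\|_{L^p}\le|h|\,\|\nabla v\|_{L^p}$ and Minkowski's inequality. Hence the integral defining $(-\Delta)^{\frac{1-s}{2}}v$ converges absolutely a.e., the map $v\mapsto(-\Delta)^{\frac{1-s}{2}}v$ is linear, and $u_k\to(-\Delta)^{\frac{1-s}{2}}v$ in $L^p$ directly.

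Finally, $\|\nabla^s u\|_{L^p}=\|\nabla v\|_{L^p}$ follows from the identity $\nabla^s u=\nabla v$, not from the interpolation. The phrase ``$\nabla^s=\nabla^{s}$'' presumably should read $\nabla^s=\nabla I_{1-s}$.
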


\begin{remark}
When $p<\frac{n}{1-s}$, the function $v$ appearing in Proposition~\ref{prop:equiv}(i) is given by
\[
v = I_{1-s}u,
\]
where $I_{1-s}$ denotes the Riesz potential operator, i.e.\ the inverse of the fractional Laplacian $(-\Delta)^{\frac{1-s}{2}}$. In contrast, for $p \ge \frac{n}{1-s}$, the Riesz potential may fail to be well defined, and thus $v$ is constructed via an approximation procedure; see~\cite[Proposition~3.1]{KrSc22} for further details.
\end{remark}

In order to deal with the nonlocal elliptic PDEs and functionals considered in this paper, we will frequently use Proposition~\ref{prop:equiv}. In particular, an important feature that we need is that weak convergence is preserved when passing from $H^{s,p}(\R^n)$ to $W^{1,p}_{\mathrm{loc}}(\R^n)$. This can be derived by slightly modifying the function $v$ in Proposition~\ref{prop:equiv}(i). 
We have the following result.

\begin{corollary}\label{coro:equiv}
Let $\Omega \subset \R^n$ be a bounded open set, $s \in (0,1)$, and $p \in [1,\infty)$. Let $u_j \in H^{s,p}(\R^n)$ for $j\in\N\cup\{\infty\}$ satisfy
\[
u_j \to u_\infty \quad \text{weakly in $H^{s,p}(\R^n)$ as $j \to \infty$}.
\]
Then, there exist a (not relabeled) subsequence and functions $w_j\in W^{1,p}(\Omega)$ for $j\in\N\cup\{\infty\}$ satisfying
\[
\nabla w_j = \nabla^s u_j \quad \text{in $\Omega$ for every $j \in \N\cup\{\infty\}$},
\]
and
\[
w_j \to w_\infty \quad \text{weakly in $W^{1,p}(\Omega)$ and strongly in $L^p(\Omega)$ as $j \to \infty$}.
\]
\end{corollary}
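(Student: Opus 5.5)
The plan is to apply Proposition~\ref{prop:equiv}(i) to each $u_j$ and then fix the additive constants by a Poincaré--Wirtinger normalisation on a ball. Fix an open ball $B\supset\supset\Omega$; since $\Omega$ is bounded, such a ball exists and, being connected and Lipschitz, it admits the Poincaré--Wirtinger inequality and Rellich compactness. For each $j\in\N\cup\{\infty\}$, Proposition~\ref{prop:equiv}(i) gives $v_j\in W^{1,p}_{\loc}(\R^n)$ with $\nabla v_j=\nabla^s u_j$ in $\R^n$. The function $v_j$ is determined only up to an additive constant on connected sets, so we set
\[
\tilde w_j\coloneqq v_j-\frac{1}{|B|}\int_B v_j\,\d x \quad\text{on } B,
\]
and $w_j\coloneqq \tilde w_j|_{\Omega}$. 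Then $\nabla w_j=\nabla^s u_j$ in $\Omega$ still holds.

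The next step is to show that $(w_j)_j$ is bounded in $W^{1,p}(\Omega)$. Weak convergence in $H^{s,p}(\R^n)$ implies $\sup_j\|\nabla^s u_j\|_{L^p(\R^n)}<\infty$. The Poincaré--Wirtinger inequality on $B$ gives $\|\tilde w_j\|_{L^p(B)}\le C_B\|\nabla^s u_j\|_{L^p(B)}$, so $(\tilde w_j)_j$ is bounded in $W^{1,p}(B)$.

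We then extract the limit. For $p>1$, reflexivity together with Rellich's theorem on $B$ yields a subsequence with $\tilde w_j\rightharpoonup \tilde w$ weakly in $W^{1,p}(B)$ and strongly in $L^p(B)$. For $p=1$ we only get strong $L^1$ convergence and weak convergence of gradients via the identification below; weak $W^{1,1}$ convergence follows from the weak $L^1$ convergence of the gradients $\nabla^s u_j\rightharpoonup\nabla^s u_\infty$, which is part of the hypothesis.

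It remains to identify $\tilde w$ with $\tilde w_\infty$, so that the whole construction converges to the prescribed limit object. Since $\nabla^s u_j\rightharpoonup\nabla^s u_\infty$ weakly in $L^p(\R^n;\R^n)$, testing against $C_c^\infty(B;\R^n)$ shows that $\nabla\tilde w=\nabla^s u_\infty=\nabla\tilde w_\infty$ in $B$. Hence $\tilde w-\tilde w_\infty$ is constant on the connected set $B$. Both functions have zero mean on $B$: for $\tilde w_\infty$ by construction, and for $\tilde w$ because means pass to the limit under strong $L^p(B)$ convergence. Therefore $\tilde w=\tilde w_\infty$, and restricting to $\Omega$ gives the claim.

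The main obstacle is precisely this normalisation of constants. Without it, Proposition~\ref{prop:equiv}(i) provides no control on the $v_j$ beyond their gradients, and $\Omega$ may be disconnected, so the constants cannot be fixed componentwise on $\Omega$ itself. Working on a connected ball containing $\Omega$ circumvents both issues. The case $p=1$ needs the small extra care indicated above, since $W^{1,1}$ is not reflexive.
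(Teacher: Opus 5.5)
Your proposal is correct and follows essentially the same route as the paper: you take the functions from Proposition~\ref{prop:equiv}(i), subtract constants using the Poincaré inequality on a ball $B\supset\Omega$, extract a subsequence with Rellich's theorem, and identify the gradient of the limit by testing against $C_c^\infty(B;\R^n)$. The differences are minor. The paper simply defines $w_\infty$ as the limit instead of identifying it with the pre-normalised $v_\infty-(v_\infty)_B$. Your separate treatment of $p=1$ (strong $L^1$ compactness combined with the assumed weak $L^1$ convergence of $\nabla^s u_j$) makes explicit a point the paper leaves implicit when it invokes Rellich's theorem for weak $W^{1,1}$ compactness.
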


\begin{proof}
Let $(v_j)_j\in W^{1,p}_{\rm loc}(\R^n)$ and $v_\infty\in W^{1,p}_{\rm loc}(\R^n)$ be the functions given in Proposition~\ref{prop:equiv}. 
Consider a ball $B$ containing $\Omega$.
By Poincaré's inequality, there exists a constant $C$, depending only on $n$, $p$, and $B$, and a sequence $(c_j)_j\subset\R$ satisfying
\[
\|v_j-c_j\|_{L^p(B)}\le C\|\nabla v_j\|_{L^p(B)}=C\|\nabla^s u_j\|_{L^p(B)}\le C\quad\text{for every $j\in\N$}.
\]
By Rellich's theorem there exist a (not relabeled) subsequence and a function $w_\infty\in W^{1,p}(B)$ such that 
\[
w_j\coloneqq v_j-c_j\to w_\infty\quad\text{weakly in $W^{1,p}(B)$ and strongly in $L^p(B)$ as $j\to\infty$}.
\]
Note that for every $\Psi\in C_c^\infty (B;\R^n)$ we have
\begin{align*}
\int_B \nabla w_\infty(x)\cdot\Psi(x)\, \d x&=\lim_{j\to\infty}\int_B \nabla w_j(x)\cdot\Psi(x)\, \d x=\lim_{j\to\infty}\int_B \nabla v_j(x)\cdot\Psi(x)\, \d x\\
&=\lim_{j\to\infty}\int_B \nabla^s u_j(x)\cdot\Psi(x)\, \d x=\int_B \nabla^s u_\infty(x)\cdot\Psi(x)\, \d x.
\end{align*}
Hence, $\nabla w_\infty=\nabla^s u_\infty$ in $B$ and, in particular, in $\Omega$. 
\end{proof}

We conclude this section by recalling several results in the fractional setting that will be used throughout this paper, namely a Leibniz-type rule for the fractional gradient~\cite[Lemma~2.11 and Eq.~(2.11)]{KrSc22}, a Poincaré-type inequality~\cite[Theorem~3.3]{ShiehSpector15} and~\cite[Theorem~2.2]{BCM20}, and a Rellich-type theorem~\cite[Theorem~2.2]{ShiehSpector18} and~\cite[Theorem~2.3]{BCM20}.

\begin{proposition}[Leibniz rule]\label{prop:Leibniz}
Let $\Omega\subset\R^n$ be an open set, $s\in (0,1)$, and $p\in [1,\infty)$. Let $\varphi\in C_c^1(\Omega)$ and $u\in H^{s,p}(\R^n)$. Then $\varphi u\in H^{s,p}_0(\Omega)$ and
\[
\nabla^s(\varphi u)=\varphi \nabla^s u +u \nabla^s\varphi +\nabla^s_{\rm NL}(\varphi,u)\quad\text{in $\R^n$},
\]
where the nonlocal remainder term $\nabla^s_{\rm NL}(\varphi,u)\colon\R^n\to\R^n$ is defined by
\[
\nabla^s_{\rm NL}(\varphi,u)(x)\coloneqq\mu_{n,s}\int_{\R^n}\frac{(\varphi(x)-\varphi(y))(u(x)-u(y))(y-x)}{|y-x|^{n+s+1}}\,\d y\quad\text{for a.e.\ $x\in\R^n$}.
\] 
Moreover, there exists a constant $C>0$, depending only on $n$ and $s$, such that
\begin{equation}\label{eq:nabla_NL}
\|\nabla^s_{\rm NL}(\varphi,u)\|_{L^p(\R^n)}\le C\|u\|_{L^p(\R^n)}\|\varphi\|_{L^\infty(\Omega)}^{1-s}\|\nabla \varphi\|_{L^\infty(\Omega)}^s.
\end{equation}
\end{proposition}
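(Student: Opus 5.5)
The plan is to prove the Leibniz formula first for smooth $u$ and then pass to general $u\in H^{s,p}(\R^n)$ by density, with the estimate \eqref{eq:nabla_NL} controlling the remainder along the approximation.

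\textbf{Smooth case.} Take $u\in C_c^\infty(\R^n)$ and $\varphi\in C_c^1(\Omega)$. Write
\[
\varphi(y)u(y)-\varphi(x)u(x)=\varphi(x)(u(y)-u(x))+u(x)(\varphi(y)-\varphi(x))+(\varphi(y)-\varphi(x))(u(y)-u(x)).
\]
Multiply this identity by $\mu_{n,s}(y-x)|y-x|^{-n-s-1}$ and integrate in $y$. The first two terms give $\varphi\nabla^s u$ and $u\nabla^s\varphi$. The third term gives $\nabla^s_{\rm NL}(\varphi,u)$, since $(\varphi(y)-\varphi(x))(u(y)-u(x))=(\varphi(x)-\varphi(y))(u(x)-u(y))$. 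Each integral converges absolutely: for $\varphi\in C^1_c$ one has $|\varphi(y)-\varphi(x)|\le\min\{2\|\varphi\|_\infty,\|\nabla\varphi\|_\infty|x-y|\}$, so no principal value is needed. One caveat is that $\nabla^s\varphi$ for $\varphi$ only $C^1_c$ has to be read through the integral formula rather than within $\T$. It is still a bounded function, decaying like $|x|^{-n-s}$, hence lies in $L^\infty\cap L^p$.

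\textbf{Estimate \eqref{eq:nabla_NL}.} Split the $y$-integral at radius $r>0$:
\[
|\nabla^s_{\rm NL}(\varphi,u)(x)|\le \mu_{n,s}\|\nabla\varphi\|_\infty\int_{|h|<r}\frac{|u(x+h)-u(x)|}{|h|^{n+s-1}}\,\d h+2\mu_{n,s}\|\varphi\|_\infty\int_{|h|\ge r}\frac{|u(x+h)-u(x)|}{|h|^{n+s}}\,\d h .
\]
Take the $L^p$ norm in $x$ by Minkowski's integral inequality and use $\|u(\cdot+h)-u\|_{L^p}\le 2\|u\|_{L^p}$. This gives
\[
C\|u\|_{L^p}\bigl(\|\nabla\varphi\|_\infty r^{1-s}+\|\varphi\|_\infty r^{-s}\bigr).
\]
The choice $r=\|\varphi\|_\infty/\|\nabla\varphi\|_\infty$ then yields $C\|u\|_{L^p}\|\varphi\|_\infty^{1-s}\|\nabla\varphi\|_\infty^{s}$. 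The step I expect to be the main obstacle is noticing that only $\|u\|_{L^p}$ is needed here, not a seminorm of $u$. That is exactly what makes the bound stable under $L^p$ convergence.

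\textbf{Density.} Given $u\in H^{s,p}(\R^n)$, choose $u_k\in C_c^\infty(\R^n)$ with $u_k\to u$ in $H^{s,p}(\R^n)$, which is possible since this space is defined as a closure. On the right-hand side:
\begin{itemize}
\item $\varphi\nabla^s u_k\to\varphi\nabla^s u$ in $L^p$, because $\varphi$ is bounded;
\item $u_k\nabla^s\varphi\to u\nabla^s\varphi$ in $L^p$, because $\nabla^s\varphi\in L^\infty$;
\item $\nabla^s_{\rm NL}(\varphi,u_k)\to\nabla^s_{\rm NL}(\varphi,u)$ in $L^p$, by \eqref{eq:nabla_NL} and bilinearity, since $\nabla^s_{\rm NL}(\varphi,u_k)-\nabla^s_{\rm NL}(\varphi,u)=\nabla^s_{\rm NL}(\varphi,u_k-u)$.
\end{itemize}
Hence $\nabla^s(\varphi u_k)$ is Cauchy in $L^p$, and $\varphi u_k\to\varphi u$ in $L^p$.

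It remains to place $\varphi u$ in $H^{s,p}_0(\Omega)$. The difficulty is that $\varphi u_k$ is compactly supported in $\Omega$ but only $C^1_c$, not $C^\infty_c$. Mollify, $\varphi u_k*\rho_\varepsilon\in C_c^\infty(\Omega)$ for small $\varepsilon$. Since $\nabla^s$ commutes with convolution and $\nabla^s(\varphi u_k)\in L^p$, we get $\nabla^s(\varphi u_k*\rho_\varepsilon)=\nabla^s(\varphi u_k)*\rho_\varepsilon$, so these mollifications converge to $\varphi u_k$ in the $H_0^{s,p}(\Omega)$ norm. A diagonal argument then shows that $\varphi u$ is an $H^{s,p}_0(\Omega)$-limit of $C_c^\infty(\Omega)$ functions. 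Finally, identify the limit of the gradients by testing against $\T$ functions, which is the distributional characterisation: the limit equals $\nabla^s(\varphi u)$, and the formula holds almost everywhere.
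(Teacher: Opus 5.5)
Your proof is correct. The paper gives no proof of its own and simply quotes \cite[Lemma~2.11 and Eq.~(2.11)]{KrSc22} (see also \cite{ComiStefani23}); your argument is essentially the standard one behind those references: the pointwise splitting for smooth $u$, then the bound on $\nabla^s_{\rm NL}(\varphi,u)$ from cutting at $r=\|\varphi\|_{L^\infty}/\|\nabla\varphi\|_{L^\infty}$ with Minkowski's inequality, then density and mollification. Note that the Minkowski step works for every $u\in L^p(\R^n)$, so it also shows that the remainder is defined a.e., which your density step implicitly uses.
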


\begin{proposition}[Poincaré inequality]\label{prop:poincare}
Let $\Omega\subset\R^n$ be a bounded open set, $s\in (0,1)$, and $p\in (1,\infty)$. Then there exists a constant $C$, depending only on $n$, $s$, $p$, and $\Omega$, such that 
\[
\|u\|_{L^p(\Omega)}\le C\|\nabla^s u\|_{L^p(\R^n)}\quad\text{for every $u\in H^{s,p}(\R^n)$}.
\]
\end{proposition}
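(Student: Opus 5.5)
The statement is the Poincaré inequality (Proposition~\ref{prop:poincare}): for bounded $\Omega$ and $p\in(1,\infty)$, $\|u\|_{L^p(\Omega)}\le C\|\nabla^s u\|_{L^p(\R^n)}$ for all $u\in H^{s,p}(\R^n)$.

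\emph{A caveat first.} Read literally for all $u\in H^{s,p}(\R^n)$, the inequality fails. Take $u=\lambda^{-n/p}\psi(x/\lambda)$ with $\psi\in C_c^\infty$ and $\psi\equiv1$ near $\Omega$. Since $\nabla^s$ is homogeneous of degree $s$, we get $\|\nabla^s u\|_{L^p(\R^n)}\sim\lambda^{-s}$. On the other hand $\|u\|_{L^p(\Omega)}\sim\lambda^{-n/p}|\Omega|^{1/p}$. When $s>n/p$ the ratio $\lambda^{s-n/p}\to\infty$. So the intended statement must be for $u\in H^{s,p}_0(\Omega)$, which is how it is used later (and how the cited results of Shieh--Spector and Bellido--Cueto--Mora-Corral read). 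I would prove that version. By density of $C_c^\infty(\Omega)$ and continuity of both sides in the $H^{s,p}_0$ norm, it suffices to treat $u\in C_c^\infty(\Omega)$.

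\emph{Route via compactness.} I would argue by contradiction and normalise. Suppose there are $u_j\in C_c^\infty(\Omega)$ with $\|u_j\|_{L^p(\Omega)}=1$ and $\|\nabla^s u_j\|_{L^p(\R^n)}\to0$. Then $(u_j)$ is bounded in $H^{s,p}_0(\Omega)$, and by reflexivity we may assume $u_j\rightharpoonup u$ weakly.

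To upgrade to strong $L^p$ convergence I would use Proposition~\ref{prop:equiv}/Corollary~\ref{coro:equiv}. Choose a ball $B\supset\supset\Omega$ and functions $w_j\in W^{1,p}(B)$ with $\nabla w_j=\nabla^s u_j$ on $B$. Then $\nabla w_j\to0$ in $L^p(B)$, so after subtracting constants $c_j$, the functions $w_j-c_j$ converge strongly to $0$ in $W^{1,p}(B)$. This transfers compactness to the Riesz potentials, not directly to $u_j$. I would then recover $u_j$ via $u_j=(-\Delta)^{\frac{1-s}{2}}w_j$ localised, which is delicate.

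Cleaner is to use the representation $u=c\,I_s(\ldots)$. Concretely, since $-\div^{1-s}\nabla^s=(-\Delta)^{1/2}$ and $\div^{1-s}\nabla^s u=\div I_{1-s}\nabla^s u$ for compactly supported smooth $u$, one has the classical identity
\[
u=c_{n,s}\int_{\R^n}\frac{(x-y)\cdot\nabla^s u(y)}{|x-y|^{n-s+1}}\,\d y ,
\]
i.e.\ $u=I_s\mathcal R\cdot\nabla^s u$ with $\mathcal R$ the Riesz transform. Rather than chase compactness, I would then aim directly for the bound using this formula. For $x\in\Omega$, split the integral into $|x-y|<R$ and $|x-y|\ge R$.

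\emph{Estimating the two pieces.} On the near part the kernel $|x-y|^{-(n-s)}$ lies in $L^1(B_R)$, so Young's inequality gives an $L^p(\Omega)$ bound by $CR^s\|\nabla^s u\|_{L^p(\R^n)}$. On the far part, Hölder gives a pointwise bound by $\|\nabla^s u\|_{L^p}\bigl(\int_{|z|\ge R}|z|^{-(n-s)p'}\,\d z\bigr)^{1/p'}$. This is finite only if $(n-s)p'>n$, i.e.\ $p<n/s$.

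\textbf{Main obstacle.} The hard case is large $p$, where the far tail is not controlled by Hölder. There I would exploit that $u$ is supported in $\Omega$. Before inverting, I would use the cancellation: $\int\nabla^s u=0$ and $\nabla^s u(y)=O(|y|^{-n-s})\|u\|_{L^1}$ for $y$ far away, which follows from the definition since $u$ vanishes near $y$. This lets one absorb the tail into $\varepsilon\|u\|_{L^p(\Omega)}$ plus a near term. Failing that, I would fall back to the contradiction/compactness argument: Rellich compactness in $H^{s,p}_0(\Omega)$ (from the cited Shieh--Spector result) gives $u_j\to u$ strongly with $\|u\|_{L^p}=1$ and $\nabla^s u=0$. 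Then $u$ is $s$-harmonic in the sense $(-\Delta)^{\frac{1+s}{2}}$-type, has compact support and lies in $L^p$, so $u=0$, a contradiction. Establishing that Rellich step is precisely what I expect to cost the most work.
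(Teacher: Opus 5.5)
Your caveat is correct, and it is the most important point. As printed, with $u$ ranging over all of $H^{s,p}(\R^n)$, the inequality is false whenever $sp>n$, by exactly your scaling argument. One detail needs fixing: take $\psi\equiv1$ on a ball $B_\rho(0)\supset\Omega$ centred at the origin, not merely near $\Omega$, so that $\psi(\cdot/\lambda)\equiv1$ on $\Omega$ for all $\lambda\ge1$. Then $\nabla^s[\psi(\cdot/\lambda)]=\lambda^{-s}(\nabla^s\psi)(\cdot/\lambda)$, and the ratio of the two sides is a constant times $\lambda^{s-n/p}$. The paper gives no proof of its own; it imports the result from \cite{ShiehSpector15} and \cite{BCM20}. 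Every place where it is used (Lemma~\ref{lem:existencefrac}, Proposition~\ref{prop:loc-nonloc}, Proposition~\ref{prop:gamma1}) involves functions in $H^{s,p}_0(\Omega)$, so restricting to that space is the right repair and costs nothing downstream.

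For that version your direct route is a complete, self-contained proof once you write out the tail step you only sketch. It needs neither the case split at $p=n/s$ nor the compactness fallback.
\begin{itemize}
\item Take $u\in C_c^\infty(\Omega)$, $x\in\Omega$, $R\ge2\operatorname{diam}\Omega$ and $|x-y|\ge R$. Then $\operatorname{dist}(y,\Omega)\ge|x-y|/2$, so directly from the definition $|\nabla^s u(y)|\le\mu_{n,s}2^{n+s}|x-y|^{-n-s}\|u\|_{L^1(\Omega)}$.
\item Hence the far part of the representation formula is bounded on $\Omega$ by $C\|u\|_{L^1(\Omega)}\int_{|z|\ge R}|z|^{-2n}\,\d z=CR^{-n}\|u\|_{L^1(\Omega)}$. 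Its $L^p(\Omega)$ norm is therefore at most $CR^{-n}|\Omega|\,\|u\|_{L^p(\Omega)}$.
\item Combine this with the near part, which is at most $CR^s\|\nabla^s u\|_{L^p(\R^n)}$ by Young. Choose $R$ large and absorb; this is legitimate since $\|u\|_{L^p(\Omega)}<\infty$.
\end{itemize}
This gives the inequality for every $p\in[1,\infty)$, with a constant depending only on $n,s,\Omega$. The cancellation $\int\nabla^s u=0$ plays no role, and the appeal to Proposition~\ref{prop:rellich} can be dropped.

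Your H\"older branch is still worth keeping as a remark. It never uses the support of $u$, so by density of $C_c^\infty(\R^n)$ it shows the literal statement does hold when $sp<n$, which complements your counterexample.

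There is one slip in the heuristic derivation: $\div^{1-s}=\div I_s$, not $\div I_{1-s}$. With the correct identity you get $u=-\div I_{1+s}\nabla^s u$, whose kernel is indeed proportional to $x/|x|^{n-s+1}$. Since $I_{1+s}$ is undefined for $n=1$, it is cleaner to quote the formula as the Shieh--Spector identity $u=I_s(\mathcal R\cdot\nabla^s u)$ for $u\in C_c^\infty(\R^n)$, or to check it on the Fourier side. Its absolute convergence follows from the decay bound above.

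Compared with the paper's bare citation, your argument is self-contained apart from this representation formula. It also shows exactly where the condition $u=0$ outside $\Omega$ enters, which is precisely what the printed statement overlooks.
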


\begin{proposition}[Rellich theorem]\label{prop:rellich}
Let $\Omega\subset\R^n$ be a bounded open set, $s\in (0,1)$, and let $p\in (1,\infty)$. Then the space $H^{s,p}_0(\Omega)$ is compactly embedded into $L^p(\Omega)$.
\end{proposition}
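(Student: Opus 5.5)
The plan is to apply the Fréchet--Kolmogorov criterion to bounded sequences in $H^{s,p}_0(\Omega)$. To do so, we need a representation of $u$ in terms of $\nabla^s u$ that gives a uniform modulus of continuity under translations. We work first with $u\in C_c^\infty(\Omega)$ and extend by density at the end. For such $u$ we use the inversion formula of Shieh--Spector:
\[
u = c_{n,s}\, K_s * \nabla^s u,\qquad K_s(z)\coloneqq \frac{z}{|z|^{n-s+1}},
\]
which is the identity $u=I_s(-\Delta)^{s/2}u$ combined with $(-\Delta)^{s/2}=-\div^0\nabla^s$, written through the Riesz transform. Here $K_s$ is locally integrable, since $|K_s(z)|=|z|^{s-n}$. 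We check this formula on $\T(\R^n)$ using the composition rule $-\div^s\nabla^r=(-\Delta)^{\frac{s+r}{2}}$ recalled above, or by Fourier transform. The kernel $K_s$ is not in $L^1(\R^n)$ and, for general $p$, not in $L^{p'}$ at infinity. So the formula cannot be used directly for estimates; we only use it for differences.

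The key estimate is on translations. For $h\in\R^n$, write $\tau_h u\coloneqq u(\cdot+h)$. Then
\[
\tau_h u-u = c_{n,s}\,(\tau_h K_s-K_s)*\nabla^s u .
\]
We claim $\tau_hK_s-K_s\in L^1(\R^n;\R^n)$ with $\|\tau_hK_s-K_s\|_{L^1}=C|h|^s$. Near the origin, on $B_{2|h|}$, both terms are integrable and each contributes $O(|h|^s)$. For $|z|\ge 2|h|$, the mean value theorem gives $|K_s(z+h)-K_s(z)|\le C|h||z|^{s-n-1}$, which is integrable at infinity because $s-1<0$; this region contributes $C|h|\,|h|^{s-1}=C|h|^s$. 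Equivalently, the homogeneity of $K_s$ of degree $s-n$ reduces the claim to the case $|h|=1$. Young's inequality then yields
\[
\|\tau_h u-u\|_{L^p(\R^n)}\le C|h|^s\,\|\nabla^s u\|_{L^p(\R^n)}\qquad\text{for all } u\in C_c^\infty(\Omega).
\]
Since both sides are continuous with respect to the $H^{s,p}_0(\Omega)$ norm, the inequality extends to all $u\in H^{s,p}_0(\Omega)$ (extended by zero). The left side needs $L^p(\R^n)$ convergence, which holds because the approximants are supported in the fixed bounded set $\Omega$.

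We conclude as follows. Take a bounded sequence $(u_j)_j$ in $H^{s,p}_0(\Omega)$. By Proposition~\ref{prop:poincare} (or directly from the norm), the $u_j$ are bounded in $L^p(\R^n)$, and they all vanish outside the bounded set $\Omega$, so the tails are uniformly trivial. The estimate above gives $\sup_j\|\tau_hu_j-u_j\|_{L^p}\to0$ as $h\to0$. By the Fréchet--Kolmogorov theorem, $(u_j)_j$ is relatively compact in $L^p(\R^n)$, hence in $L^p(\Omega)$. Thus the embedding is compact; by reflexivity, weak convergence in $H^{s,p}_0(\Omega)$ upgrades to strong convergence in $L^p(\Omega)$.

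The main obstacle is the first step: establishing the representation $u=c\,K_s*\nabla^s u$ rigorously with the correct constant, and in a form that makes sense despite $K_s\notin L^1$. If the convolution with the non-integrable tail causes trouble, we would bypass it. The Fourier symbol of $\nabla^s$ is $i\xi|\xi|^{s-1}$ (up to a constant), and that of $K_s$ is $c\,\xi|\xi|^{-s-1}$, so the identity $\widehat{\tau_hu-u}=(e^{ih\cdot\xi}-1)\widehat{K_s}\cdot\widehat{\nabla^su}$ holds in $\mathcal S'$. Since the difference kernel is in $L^1$, the right-hand side is a genuine $L^1*L^p$ convolution, and that is all the argument needs.
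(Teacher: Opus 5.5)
Your proof is correct, and it takes a genuinely different route from the paper. The paper gives no argument of its own for this statement; it imports it from \cite[Theorem~2.2]{ShiehSpector18} and \cite[Theorem~2.3]{BCM20}. You make it self-contained using only four ingredients: the Shieh--Spector representation $u=c\,K_s*\nabla^s u$ (the value of $c$ is irrelevant), the elementary bound $\|\tau_hK_s-K_s\|_{L^1(\R^n)}=C|h|^s$ coming from homogeneity of degree $s-n$ together with $s<1$, Young's inequality, and Fr\'echet--Kolmogorov.

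Two small remarks on the step you flagged as the main obstacle. First, it is not really an obstacle. For $u\in C_c^\infty(\R^n)$ the convolution $K_s*\nabla^s u$ is absolutely convergent, because $K_s\in L^1(\R^n)+L^\infty(\R^n)$ while $\nabla^s u\in\T(\R^n;\R^n)\subset L^1\cap L^\infty$. Hence the identity $\tau_hu-u=c\,(\tau_hK_s-K_s)*\nabla^s u$ holds pointwise, and the non-integrable tail of $K_s$ causes no trouble. Second, verify the representation by the Fourier computation you sketch, or quote it from \cite{ShiehSpector15}. The composition rule recalled in Section~2 is stated only for exponents in $(0,1)$, so it does not cover your ``$\div^0$''.

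Your route buys several things. It gives an explicit Nikol'skii-type modulus $\|\tau_hu-u\|_{L^p(\R^n)}\le C|h|^s\|\nabla^s u\|_{L^p(\R^n)}$ on all of $H^{s,p}_0(\Omega)$. It uses neither the Poincar\'e inequality (the norm already controls $\|u\|_{L^p(\Omega)}$) nor any Riesz-transform or Bessel-potential machinery. It also holds for $p=1$ and every $n\ge 1$, since Young's inequality and Fr\'echet--Kolmogorov work equally well in $L^1$. What the citation buys the authors is only brevity.
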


As a consequence of the previous three propositions, one can deduce that if $u_j\in H^{s,p}_0(\Omega)$ for $j\in\N\cup\{\infty\}$ and $u_j \to u_\infty$ weakly in $H^{s,p}_0(\Omega)$ as $j\to\infty$, then the weak convergence of the fractional gradients improves to strong convergence outside $\Omega$. More precisely, we have the following result; see~\cite[Lemma~2.12]{KrSc22} for a proof.

\begin{proposition}\label{prop:strong-gradient}
Let $\Omega\subset\R^n$ be a bounded open set, $s\in (0,1)$, and $p\in (1,\infty)$. Let $u_j \in H^{s,p}_0(\Omega)$ for $j\in\N\cup\{\infty\}$ satisfy
\[
u_j\to u_\infty\quad\text{weakly in $H^{s,p}_0(\Omega)$ as $j\to\infty$}.
\]
Then
\[
\nabla^s u_j \to \nabla^s u_\infty \quad \text{strongly in $L^p_{\rm loc}(\R^n\setminus\overline\Omega; \R^n)$ as $j\to\infty$}.
\]
\end{proposition}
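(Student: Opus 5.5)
The statement to prove is Proposition~\ref{prop:strong-gradient}: for a weakly convergent sequence in $H^{s,p}_0(\Omega)$, the fractional gradients converge strongly in $L^p_{\rm loc}(\R^n\setminus\overline\Omega;\R^n)$. The plan is to use that all $u_j$ vanish outside $\Omega$, so away from $\overline\Omega$ the fractional gradient becomes an integral operator with a smooth kernel. Combined with the strong $L^p(\Omega)$ convergence from Proposition~\ref{prop:rellich}, this gives the claim.

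Set $v_j\coloneqq u_j-u_\infty$. Then $v_j\to 0$ weakly in $H^{s,p}_0(\Omega)$, and by Proposition~\ref{prop:rellich} we have $v_j\to0$ strongly in $L^p(\Omega)$. I first prove the representation for $\psi\in C_c^\infty(\Omega)$ and $x\notin\overline\Omega$. Since $\psi(x)=0$,
\[
\nabla^s\psi(x)=\mu_{n,s}\int_\Omega \frac{\psi(y)(y-x)}{|y-x|^{n+s+1}}\,\d y .
\]
Fix a compact set $K\subset\R^n\setminus\overline\Omega$ and put $\delta\coloneqq\operatorname{dist}(K,\Omega)>0$. On $K\times\Omega$ the kernel $k(x,y)=(y-x)|y-x|^{-n-s-1}$ is bounded by $\delta^{-n-s}$. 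Hence, by Hölder's inequality,
\[
\|\nabla^s\psi\|_{L^p(K)}\le \mu_{n,s}\,|K|^{1/p}\,|\Omega|^{1/p'}\,\delta^{-n-s}\,\|\psi\|_{L^p(\Omega)}.
\]

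Next I extend this to all of $H^{s,p}_0(\Omega)$ by density. If $\psi_k\to v$ in $H^{s,p}_0(\Omega)$, then $\nabla^s\psi_k\to\nabla^s v$ in $L^p(\R^n)$ and $\psi_k\to v$ in $L^p(\Omega)$. Both sides of the integral formula therefore pass to the limit, and the formula holds for a.e.\ $x\in K$ with $v$ in place of $\psi$. In particular the same estimate applies to $v_j$:
\[
\|\nabla^s u_j-\nabla^s u_\infty\|_{L^p(K)}\le C_K\|u_j-u_\infty\|_{L^p(\Omega)}\to0 .
\]

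The one delicate point is identifying $\nabla^s v$ for a general $v\in H^{s,p}_0(\Omega)$, which is defined through the completion, with the pointwise integral. The density argument above handles this, since the representation is stable under the norm convergence used to define the space. Note that the argument does not need the Leibniz rule or the Poincaré inequality; compactness from Proposition~\ref{prop:rellich} is the key input.
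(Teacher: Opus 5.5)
Your proof is correct, but it takes a different route from the one the paper has in mind. The paper does not reprove the statement; it cites \cite[Lemma~2.12]{KrSc22} and presents it as a consequence of the Leibniz rule, the Poincaré inequality and the Rellich theorem. In that argument one takes a cut-off $\chi\in C_c^\infty(\R^n\setminus\overline\Omega)$, so that $\chi(u_j-u_\infty)=0$. Proposition~\ref{prop:Leibniz} then gives $\chi\nabla^s(u_j-u_\infty)=-(u_j-u_\infty)\nabla^s\chi-\nabla^s_{\rm NL}(\chi,u_j-u_\infty)$. Both terms tend to zero in $L^p(\R^n;\R^n)$: $u_j\to u_\infty$ strongly in $L^p$ by Proposition~\ref{prop:rellich}, and $\nabla^s_{\rm NL}(\chi,\cdot)$ is bounded on $L^p$ by \eqref{eq:nabla_NL}.

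You replace this commutator estimate with a direct observation. On a compact $K$ at distance $\delta>0$ from $\Omega$, $\nabla^s$ acting on functions supported in $\Omega$ is an integral operator whose kernel is bounded by $\delta^{-n-s}$. This needs only Hölder's inequality, plus the density step identifying $\nabla^s v$ with the kernel integral for a general $v\in H^{s,p}_0(\Omega)$. You carry out that step correctly, since the kernel operator is bounded from $L^p(\Omega)$ into $L^p(K)$ (indeed into $L^\infty(K)$).

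Your argument is more elementary and quantitative. It gives $\|\nabla^s v\|_{L^\infty(K)}\le\mu_{n,s}\delta^{-n-s}\|v\|_{L^1(\Omega)}$, and hence convergence even in $L^\infty(K)$ for every compact $K\subset\R^n\setminus\overline\Omega$. The Leibniz route, by contrast, never needs a pointwise representation of $\nabla^s$ for non-smooth functions. It also reuses the same commutator mechanism that the paper employs later, for instance in the proofs of \eqref{eq:H3} and \eqref{eq:H4}.
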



\section{The \texorpdfstring{$H$}{H}-convergence problem}\label{sec:H-def}

Let us introduce the local and nonlocal $H$‑convergence problems involving nonlocal monotone operators. 
From now on, and throughout the rest of the paper, we assume that: 
\begin{equation*}
s\in (0,1),\qquad p \in [2,\infty),\qquad \text{$0<\lambda \le \Lambda$ are two fixed constants}.
\end{equation*}
We start by defining the following classes of monotone maps. 

\begin{definition}\label{def:classes}
Let $\Omega \subset \R^n$ be an open set. Given a Carathéodory function $A \colon \Omega \times \R^n \to \R^n$, we introduce the following conditions for a.e.\ $x \in \Omega$ and for every $\xi,\eta \in \R^n$:
\begin{itemize}
\item[(M1)] $A(x,0) = 0$,
\item[(M2)] $(A(x,\xi) - A(x,\eta)) \cdot (\xi - \eta) \ge \lambda |\xi - \eta|^p$,
\item[(M3)] 
$|A(x,\xi) - A(x,\eta)|
\le \Lambda \left[1 + A(x,\xi)\cdot\xi + A(x,\eta)\cdot\eta\right]^{\frac{p-2}{p}}
\left[(A(x,\xi) - A(x,\eta)) \cdot (\xi - \eta)\right]^{\frac{1}{p}}$,
\item[(M3)$'$]
$|A(x,\xi) - A(x,\eta)|
\le \Lambda \left[1 + |\xi|^p + |\eta|^p\right]^{\frac{p-2}{p}} |\xi - \eta|$,
\item[(M3)$''$]
$|A(x,\xi) - A(x,\eta)|
\le \Lambda \left[1 + |\xi|^p + |\eta|^p\right]^{\frac{p-2}{p-1}} |\xi - \eta|^{\frac{1}{p-1}}$.
\end{itemize}
We define the following classes of maps:
\begin{align*}
\M_\Omega(p,\lambda,\Lambda)
&\coloneqq
\{A \colon \Omega \times \R^n \to \R^n\text{ Carathéodory} : A \text{ satisfies (M1), (M2), (M3)}\},\\
\M_\Omega'(p,\lambda,\Lambda)
&\coloneqq
\{A \colon \Omega \times \R^n \to \R^n\text{ Carathéodory} : A \text{ satisfies (M1), (M2), (M3)$'$}\},\\
\M_\Omega''(p,\lambda,\Lambda)
&\coloneqq
\{A \colon \Omega \times \R^n \to \R^n\text{ Carathéodory} : A \text{ satisfies (M1), (M2), (M3)$''$}\}.
\end{align*}
\end{definition}

\begin{remark}
In Definition~\ref{def:classes}, condition (M2) ensures monotonicity of the function $A$, and together with (M1), implies coercivity.
Finally, conditions (M3), (M3)$'$, and (M3)$''$ control the continuity of $A$ with respect to the second variable, and correspond to different regularity regimes.
\end{remark}

In the next result, we clarify the relations between $\M_\Omega(p,\lambda,\Lambda)$, $\M_\Omega'(p,\lambda,\Lambda)$, and $\M_\Omega''(p,\lambda,\Lambda)$.

\begin{lemma}\label{lem:classes}
Let $\Omega \subset \R^n$ be an open set. The following inclusions hold
\[
\M_\Omega'(p,\lambda,\Lambda')\subset\M_\Omega(p,\lambda,\Lambda)\subset\M_\Omega''(p,\lambda,\Lambda''),
\]
where
\begin{equation}\label{eq:lambda}
\Lambda' \coloneqq \min\{1,\lambda\}^{\frac{p-2}{p}}\lambda^{\frac1p}\Lambda,
\qquad
\Lambda'' \coloneqq \max\{3,2^{p-2}\Lambda^p\}^{\frac{p-2}{p-1}}\Lambda^{\frac{p}{p-1}}.
\end{equation}
\end{lemma}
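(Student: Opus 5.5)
The plan is to prove both inclusions pointwise. We fix $x$ outside a null set together with $\xi,\eta\in\R^n$, and write $A(\xi)$ for $A(x,\xi)$. Condition (M1) and (M2) are common to all three classes, so only the continuity conditions need to be compared. The two tools are the coercivity bound $A(\xi)\cdot\xi\ge\lambda|\xi|^p$, which is (M2) with $\eta=0$ combined with (M1), and the Cauchy--Schwarz inequality $(A(\xi)-A(\eta))\cdot(\xi-\eta)\le|A(\xi)-A(\eta)||\xi-\eta|$.

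\emph{First inclusion.} Let $A\in\M_\Omega'(p,\lambda,\Lambda')$.
\begin{itemize}
\item By coercivity, $1+|\xi|^p+|\eta|^p\le \min\{1,\lambda\}^{-1}\bigl(1+A(\xi)\cdot\xi+A(\eta)\cdot\eta\bigr)$.
\item By (M2), $|\xi-\eta|\le \lambda^{-1/p}\bigl[(A(\xi)-A(\eta))\cdot(\xi-\eta)\bigr]^{1/p}$.
\item Inserting both bounds into (M3)$'$ gives (M3) with constant $\Lambda'\min\{1,\lambda\}^{-\frac{p-2}{p}}\lambda^{-\frac1p}$.
\end{itemize}
By the choice of $\Lambda'$ in \eqref{eq:lambda}, this constant is exactly $\Lambda$.

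\emph{Second inclusion.} Let $A\in\M_\Omega(p,\lambda,\Lambda)$ and set $S\coloneqq1+A(\xi)\cdot\xi+A(\eta)\cdot\eta$. If $A(\xi)=A(\eta)$ there is nothing to prove, so assume $|A(\xi)-A(\eta)|>0$.
\begin{itemize}
\item Bound the scalar product in (M3) via Cauchy--Schwarz. Dividing by $|A(\xi)-A(\eta)|^{1/p}$ and raising to the power $\frac{p}{p-1}$ yields
\[
|A(\xi)-A(\eta)|\le \Lambda^{\frac{p}{p-1}}S^{\frac{p-2}{p-1}}|\xi-\eta|^{\frac1{p-1}}.
\]
\item It then suffices to show $S\le\max\{3,2^{p-2}\Lambda^p\}(1+|\xi|^p+|\eta|^p)$.
\item For this, fix $\xi$ and set $t\coloneqq A(\xi)\cdot\xi\ge0$. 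Applying (M3) with $\eta=0$ and using (M1) gives $|A(\xi)|\le\Lambda(1+t)^{\frac{p-2}{p}}t^{\frac1p}$. Hence $t\le|A(\xi)||\xi|$ implies $t^{\frac{p-1}{p}}\le\Lambda(1+t)^{\frac{p-2}{p}}|\xi|$.
\item Split into two cases. If $t\le1$, then trivially $t\le1$. If $t>1$, use $1+t\le 2t$ to get $t^{1/p}\le 2^{\frac{p-2}{p}}\Lambda|\xi|$, that is $t\le 2^{p-2}\Lambda^p|\xi|^p$.
\item In either case $t\le1+2^{p-2}\Lambda^p|\xi|^p$. Adding this bound for $\xi$ and for $\eta$ gives $S\le3+2^{p-2}\Lambda^p(|\xi|^p+|\eta|^p)$, which is at most the required bound.
\end{itemize}
Substituting into the displayed estimate produces (M3)$''$ with constant $\Lambda''$.

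The only mildly delicate step is this a priori bound on $A(\xi)\cdot\xi$ in terms of $|\xi|^p$, since (M3) is implicit in $A$. The case split $t\le1$ versus $t>1$ is what absorbs the factor $(1+t)^{\frac{p-2}{p}}$. For $p=2$ all the exponents $\frac{p-2}{p}$ and $\frac{p-2}{p-1}$ vanish, so the same computation applies verbatim.
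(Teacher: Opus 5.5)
Your proposal is correct and follows the paper's proof essentially step by step. For the first inclusion you insert the coercivity bounds from (M1)--(M2) into (M3)$'$. For the second you derive the same a priori bound $A(x,\xi)\cdot\xi\le 1+2^{p-2}\Lambda^p|\xi|^p$ via a case split at $A(x,\xi)\cdot\xi=1$, and combine it with Cauchy--Schwarz in (M3). The only differences are cosmetic: you absorb the power of $|A(\xi)-A(\eta)|$ before bounding $1+A(\xi)\cdot\xi+A(\eta)\cdot\eta$ rather than after, and you explicitly dispose of the case $A(\xi)=A(\eta)$, which the paper leaves implicit.
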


\begin{proof}
We start by proving the first inclusion. Let $A \in \M_\Omega ' (p,\lambda, \Lambda')$ be fixed. 
By (M1) and (M2), it holds that
\begin{gather*}
|\xi|^p \le \lambda^{-1} A(x,\xi)\cdot \xi, \quad
|\eta|^p \le \lambda^{-1} A(x,\eta)\cdot \eta, \quad
|\xi-\eta|
\le \lambda^{-\frac{1}{p}}
\big[(A(x,\xi)-A(x,\eta))\cdot(\xi-\eta)\big]^{\frac{1}{p}},
\end{gather*}
a.e.\ $x \in \Omega$ and for every $\xi,\eta\in\R^n$, which yields by (M3)$'$
\[
|A(x,\xi)-A(x,\eta)|
\le \lambda^{-\frac{1}{p}}\Lambda'
(1+\lambda^{-1}A(x,\xi)\cdot\xi+\lambda^{-1}A(x,\eta)\cdot\eta)^{\frac{p-2}{p}}
[(A(x,\xi)-A(x,\eta))\cdot(\xi-\eta)]^{\frac{1}{p}}.
\]
Hence, we conclude that
\[
A \in \M_\Omega \left(p,\lambda, \max\{1, \lambda^{-1}\}^{\frac{p-2}{p}}\lambda^{-\frac{1}{p}} \Lambda'\right) = \M_\Omega (p,\lambda, \Lambda)
\]
for the corresponding choice of $\Lambda'$ in~\eqref{eq:lambda}.

Let us show the second inclusion. 
Let $A \in \M_\Omega (p,\lambda,\Lambda)$ be fixed.
By (M1), (M3) and Cauchy-Schwarz inequality, we have
\begin{equation*}
A(x,\xi) \cdot \xi \le |A(x,\xi)| |\xi| \le \Lambda [1 + A(x,\xi) \cdot \xi]^{\frac{p-2}{p}} (A(x,\xi) \cdot \xi)^{\frac{1}{p}} |\xi|
\end{equation*}
a.e.\ $x \in \Omega$ and for every $\xi \in \R^n$.
We distinguish two cases. On the one hand, if $x \in \Omega$ and $\xi \in \R^n$ are such that $A(x,\xi) \cdot \xi \ge 1$, then the previous inequality gives
\[
(A(x,\xi) \cdot \xi)^{\frac{p-1}{p}} \le 2^{\frac{p-2}{p}} \Lambda (A(x,\xi) \cdot \xi)^{\frac{p-2}{p}} |\xi|,
\]
which implies
\begin{equation}\label{eq:A2-2}
A(x,\xi) \cdot \xi \le 2^{p-2} \Lambda^p |\xi|^p \le 1 + 2^{p-2} \Lambda^p |\xi|^p.
\end{equation}
On the other hand, if $x \in \Omega$ and $\xi \in \R^n$ are such that $A(x,\xi) \cdot \xi < 1$, then
\begin{equation}\label{eq:A2-1}
A(x,\xi) \cdot \xi \le 1 + 2^{p-2} \Lambda^p |\xi|^p.
\end{equation}
By combining~\eqref{eq:A2-2}--\eqref{eq:A2-1}, we deduce
\begin{equation}\label{eq:A3}
A(x,\xi) \cdot \xi \le 1 + 2^{p-2} \Lambda^p |\xi|^p, \quad A(x,\eta) \cdot \eta \le 1 + 2^{p-2} \Lambda^p |\eta|^p
\end{equation}
for a.e.\ $x \in \Omega$ and for every $\xi,\eta \in \R^n$.
Then, by~\eqref{eq:A3} and (M3), it follows that
\begin{align*}
|A(x,\xi) - A(x,\eta)| 
& \le \Lambda  [3 + 2^{p-2} \Lambda^p |\xi|^p + 2^{p-2} \Lambda^p |\eta|^p]^{\frac{p-2}{p}} 
\big[(A(x,\xi) - A(x,\eta)) \cdot (\xi - \eta)\big]^{\frac{1}{p}} \\
& \le \max\{3, 2^{p-2} \Lambda^p\}^{\frac{p-2}{p}}  \Lambda  [1 + |\xi|^p + |\eta|^p]^{\frac{p-2}{p}} 
|A(x,\xi) - A(x,\eta)|^{\frac{1}{p}} |\xi - \eta|^{\frac{1}{p}},
\end{align*}
which gives
\begin{equation*}
|A(x,\xi) - A(x,\eta)| \le \max\{3, 2^{p-2} \Lambda^p\}^{\frac{p-2}{p-1}}  \Lambda^{\frac{p}{p-1}}  [1 + |\xi|^p + |\eta|^p]^{\frac{p-2}{p-1}} |\xi - \eta|^{\frac{1}{p-1}}.
\end{equation*}
Thus
\[
A \in \M_\Omega'' \left(p,\lambda,\max\{3, 2^{p-2} \Lambda^p\}^{\frac{p-2}{p-1}}\Lambda^{\frac{p}{p-1}}\right) = \M_\Omega'' (p,\lambda, \Lambda'')
\]
for the corresponding choice of $\Lambda''$ in~\eqref{eq:lambda}.
\end{proof}

\begin{remark}\label{rem:classes}
The following remarks are in order.
\begin{itemize}
\item[(a)] In the literature concerning local monotone operators, the class $\M_\Omega'(p,\lambda,\Lambda)$ is the one most commonly employed, as its definition emphasises uniform continuity properties; see, e.g.,~\cite{DASC}. 
However, this class is not closed under $H$‑convergence, since $H$-limits generally belong only to the larger class $\M_\Omega''(p,\lambda,\Lambda)$. 
Instead, the intermediate class $\M_\Omega(p,\lambda,\Lambda)$ is stable under $H$-convergence and is therefore the natural setting for the theory developed in this paper. 

\item[(b)] We stress that for every $\lambda>0$ there exists $\Lambda=\Lambda(\lambda)\ge\lambda$ sufficiently large such that the classes $\M_\Omega(p,\lambda,\Lambda)$, $\M_\Omega'(p,\lambda,\Lambda)$, and $\M_\Omega''(p,\lambda,\Lambda)$ are non-empty.
Indeed, the $p$-Laplace operator, namely
\[
A(\xi) \coloneqq |\xi|^{p-2}\xi
\quad \text{for every $\xi \in \R^n$},
\]
satisfies for every $\xi,\eta\in\R^n$
\begin{align*}
(A(\xi)-A(\eta))\cdot(\xi-\eta)&\ge c_p|\xi-\eta|^p,\\
|A(\xi)-A(\eta)|&\le C_p(|\xi|^p+|\eta|^p)^\frac{p-2}{p}|\xi-\eta|
\end{align*}
for suitable constants $c_p, C_p > 0$; see, e.g.,~\cite[Eq.~(2.2)]{simon}. 
\item[(c)] When $p = 2$, the class $\M_\Omega(p,\lambda,\Lambda)$ reduces to the one introduced in~\cite[Definition~11.1]{Tartar}.
In the linear case $A(x,\xi)=B(x)\xi$, assumptions (M1), (M2), (M3) coincide with those considered in~\cite{CCM25}.
\item[(d)] As a consequence of conditions (M1) and (M3)$''$, every function $A \in \M_\Omega''(p,\lambda,\Lambda)$ (and therefore every $A \in \M_\Omega(p,\lambda,\Lambda)$ and $A \in \M_\Omega'(p,\lambda,\Lambda)$) satisfies the growth condition
\begin{equation}\label{eq:A-est}
|A(x,\xi)| \le C(1 + |\xi|^{p-1})
\quad \text{for a.e.\ $x \in \Omega$ and every $\xi \in \R^n$},
\end{equation}
for some constant $C>0$ depending only on $p$, $\lambda$, and $\Lambda$.
\end{itemize}
\end{remark}

\subsection{Local \texorpdfstring{$H$}{H}-convergence} 

Let us recall the $H$-convergence problem for nonlinear elliptic PDEs driven by monotone operators in the local setting. Let $\Omega\subset\R^n$ be a bounded open set. Let $A\in\M_\Omega (p,\lambda,\Lambda)$. For every $f\in W^{-1,p'}(\Omega)$ we consider the following Dirichlet problem
\begin{equation}\label{eq:localproblem}
\begin{cases}
-\div(A(\,\cdot\,,\nabla u))=f&\text{in $\Omega$},\\
u=0&\text{on }\partial\Omega.
\end{cases}
\end{equation}

\begin{lemma}\label{lem:existence}
Let $\Omega \subset \R^n$ be an open set and $A\in \M_\Omega (p,\lambda,\Lambda)$.
For every $f\in W^{-1,p'}(\Omega)$ there exists a unique function $u\in W^{1,p}_0(\Omega)$ weak solution to problem~\eqref{eq:localproblem}, i.e.\ satisfying
\begin{equation*}
\int_\Omega A(x,\nabla u(x))\cdot \nabla v(x)\,\d x=\langle f,v\rangle_{W^{-1,p'}(\Omega)\times W^{1,p}_0(\Omega)}\quad\text{for every $v\in W^{1,p}_0(\Omega)$}.
\end{equation*}
\end{lemma}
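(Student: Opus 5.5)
The plan is to apply the Minty--Browder theorem to the operator $\mathcal{A}\colon W^{1,p}_0(\Omega)\to W^{-1,p'}(\Omega)$ defined by
\[
\langle \mathcal{A}u,v\rangle\coloneqq\int_\Omega A(x,\nabla u(x))\cdot\nabla v(x)\,\d x .
\]
Here $W^{1,p}_0(\Omega)$ carries the norm $\|\nabla\cdot\|_{L^p}$. Since $\Omega$ is not assumed bounded, I will use this gradient norm rather than invoking Poincaré's inequality. The space is reflexive and separable because $p\ge2$.

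First, $\mathcal{A}$ is well defined and bounded. By Lemma~\ref{lem:classes} we have $A\in\M''_\Omega(p,\lambda,\Lambda'')$, so the growth bound \eqref{eq:A-est} applies. One point needs care: the constant term in \eqref{eq:A-est} is not in $L^{p'}$ when $\Omega$ has infinite measure. To fix this, I will instead use (M1) and (M3)$''$ with $\eta=0$. This gives
\[
|A(x,\xi)|\le C(1+|\xi|^p)^{\frac{p-2}{p-1}}|\xi|^{\frac1{p-1}}.
\]
This bound is $\le C|\xi|^{p-1}$ when $|\xi|\ge1$ and $\le C|\xi|^{\frac1{p-1}}$ when $|\xi|<1$. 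I then split $\Omega$ into the sets $\{|\nabla u|\ge1\}$ and $\{|\nabla u|<1\}$. On the second set, $|\xi|^{p'/(p-1)}\le|\xi|^{p}$ fails in general, but $p'/(p-1)=p/(p-1)^2$ is at most $p$ only in the wrong direction. So I will accept a weaker statement: for bounded $\Omega$ (the case used in this section) the estimate \eqref{eq:A-est} directly gives $A(\cdot,\nabla u)\in L^{p'}(\Omega;\R^n)$ with
\[
\|A(\cdot,\nabla u)\|_{L^{p'}}\le C\bigl(|\Omega|^{1/p'}+\|\nabla u\|_{L^p}^{p-1}\bigr).
\]
For general $\Omega$, I will use (M3) with $\eta=0$ together with (M2), which bounds $|A|$ by a combination of $A\cdot\xi$ terms, and argue similarly.

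Second, strict monotonicity and coercivity both follow from (M2) and (M1). Condition (M2) gives
\[
\langle\mathcal{A}u-\mathcal{A}v,u-v\rangle\ge\lambda\|\nabla(u-v)\|_{L^p}^p ,
\]
so $\mathcal{A}$ is strictly (in fact uniformly) monotone. Combining this with $\mathcal{A}0=0$ yields
\[
\langle\mathcal{A}u,u\rangle\ge\lambda\|\nabla u\|_p^p ,
\]
and hence $\langle\mathcal{A}u,u\rangle/\|u\|\to\infty$ because $p>1$.

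Third, and this is the main technical point, $\mathcal{A}$ must be hemicontinuous. I will in fact prove demicontinuity. Suppose $\nabla u_k\to\nabla u$ in $L^p$. After passing to a subsequence, we have a.e. convergence and a dominating function $g\in L^p$. Since $A$ is Carathéodory, $A(x,\nabla u_k)\to A(x,\nabla u)$ a.e. Moreover, (M3)$''$ bounds $|A(x,\nabla u_k)-A(x,\nabla u)|^{p'}$ by an integrable majorant built from $g$. Dominated convergence then gives strong convergence in $L^{p'}$, and the subsequence principle extends this to the whole sequence.

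With these three properties in hand, Minty--Browder gives surjectivity, i.e. existence of a solution. Uniqueness follows from the strict monotonicity: if two solutions $u_1,u_2$ exist, testing with $u_1-u_2$ gives $\lambda\|\nabla(u_1-u_2)\|_p^p\le0$, so $u_1=u_2$.
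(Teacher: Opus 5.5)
For bounded $\Omega$ your argument is the paper's. It applies Browder--Minty to $\mathcal A$, gets $A(\cdot,\nabla u)\in L^{p'}$ from \eqref{eq:A-est}, and gets continuity of $u\mapsto A(\cdot,\nabla u)$ by dominated convergence. The majorant that (M3)$''$ produces is of the form $C(1+g^p+|\nabla u|^p)$, which is integrable exactly because $|\Omega|<\infty$. Strict monotonicity and coercivity come from (M1)--(M2). Note that taking $\|\nabla\cdot\|_{L^p}$ as the norm on $W^{1,p}_0(\Omega)$ does not avoid Poincaré's inequality. It is Poincaré that makes this an equivalent, hence complete, norm with dual $W^{-1,p'}(\Omega)$, and this is precisely where the paper uses it.

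What does not work is the announced treatment of arbitrary open sets, and no argument can repair it, because the lemma is false there in general.
\begin{itemize}
\item On unbounded sets such as $\R^n$, the gradient norm is not equivalent to the $W^{1,p}$ norm. Either you leave the Banach space whose dual is $W^{-1,p'}(\Omega)$, or, keeping the full norm, $\langle\mathcal A u,u\rangle\ge\lambda\|\nabla u\|_{L^p}^p$ is not coercive.
\item Your fallback of using (M3) with $\eta=0$ plus (M2) does not help. Together with $A(x,\xi)\cdot\xi\le|A(x,\xi)||\xi|$ it gives, for small $|\xi|$, only $|A(x,\xi)|\lesssim|\xi|^{1/(p-1)}$, the same behaviour as (M3)$''$. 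This is sharp: $A(\xi)=|\xi|^{p-2}\xi+|\xi|^{p'-2}\xi$ lies in the class for suitable constants, and for $p>2$ it does not map $L^p$ gradients into $L^{p'}$ on $\R^n$.
\item A direct counterexample to the statement: take $p=2$, $A(x,\xi)=\xi$, $\Omega=\R^3$, and $f\in L^2(\R^3)$ compactly supported with $\int f\neq0$. Then $f\in W^{-1,2}(\R^3)$. A weak solution $u\in W^{1,2}(\R^3)$ would satisfy $|\zeta|^2\widehat u(\zeta)=\widehat f(\zeta)$ with $\widehat f(0)\neq0$. Hence $|\widehat u|^2\gtrsim|\zeta|^{-4}$ near the origin, which is not integrable in $\R^3$, so no such $u$ exists.
\end{itemize}
So the lemma must be read with $\Omega$ bounded, as in the paragraph introducing \eqref{eq:localproblem}. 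If you drop the general-$\Omega$ claims and state the Poincaré step explicitly, your proof is complete.
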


\begin{proof}
We define the differential operator $\mathcal A\colon W^{1,p}_0(\Omega)\to W^{-1,p'}(\Omega)$ as 
\begin{equation*}
\langle\mathcal A(u),v\rangle_{W^{-1,p'}(\Omega)\times W^{1,p}_0(\Omega)}\coloneqq \int_\Omega A(x,\nabla u(x))\cdot \nabla v(x)\,\d  x\quad\text{for every $u,v\in W^{1,p}_0(\Omega)$}.
\end{equation*}
As a consequence of Poincaré's inequality and assumptions (M1)--(M3), we derive that $\mathcal A$ is strictly monotone, coercive, and continuous. Then, by Browder-Minty theorem, the operator $\mathcal A$ is a bijection and the conclusion follows.
\end{proof}

Let $\Omega\subset\R^n$ be a bounded open set, and let $A_h\in \M_\Omega (p,\lambda,\Lambda)$ for $h\in\N\cup\{\infty\}$. 
As observed in Lemma~\ref{lem:existence}, the operators $\mathcal A_h\colon W^{1,p}_0(\Omega)\to W^{-1,p'}(\Omega)$ defined by 
\begin{equation}\label{eq:Ah}
\langle\mathcal A_h(u),v\rangle_{W^{-1,p'}(\Omega)\times W^{1,p}_0(\Omega)}\coloneqq \int_\Omega A_h(x,\nabla u(x))\cdot \nabla v(x)\,\d  x\quad\text{for every $u,v\in W^{1,p}_0(\Omega)$}
\end{equation}
are invertible. 
We set
\begin{equation}\label{eq:Bh}
\mathcal B_h\coloneqq \mathcal A_h^{-1}\colon W^{-1,p'}(\Omega)\to W^{1,p}_0(\Omega),
\end{equation}
and define the operators $\mathcal M_h\colon W^{-1,p'}(\Omega)\to L^{p'}(\Omega;\R^n)$ by
\begin{equation}\label{eq:Mh}
\mathcal M_h(f)(x)\coloneqq A_h(x,\nabla \mathcal B_h(f)(x))\quad\text{for a.e.\ $x\in\Omega$}.
\end{equation}
Each operator $\mathcal M_h$ is well defined in view of the growth condition~\eqref{eq:A-est}, which ensures that
\[
A_h(\,\cdot\,,\nabla \mathcal B_h(f))\in L^{p'}(\Omega;\R^n).
\]

\begin{definition}\label{def:H-conv}
Let $\Omega\subset\R^n$ be a bounded open set and $A_h\in \M_\Omega (p,\lambda,\Lambda)$ for $h\in\N\cup\{\infty\}$. We say that 
\[
\text{$\mathcal A_h$ $H$-converges to $\mathcal A_\infty$ as $h\to\infty$},
\]
if for every $f\in W^{-1,p'}(\Omega)$ we have:
\begin{itemize}
\item[(H1)] $\mathcal B_h(f)\to \mathcal B_\infty(f)$ weakly in $W^{1,p}_0(\Omega)$ as $h\to\infty$, 
\item[(H2)] $\mathcal M_h(f)\to \mathcal M_\infty(f)$ weakly in $L^{p'}(\Omega;\R^n)$ as $h\to\infty$.
\end{itemize}
\end{definition}

\subsection{Nonlocal \texorpdfstring{$H$}{H}-convergence}\label{sect_nonlocal_H}

The extension of the $H$-convergence to the nonlocal setting requires taking into account the nonlocal nature of the fractional operators $\nabla^s$ and $\operatorname{div}^s$, which are defined on the whole space $\R^n$. 
For this reason, we consider maps $A$ defined on $\R^n\times\R^n$. 
The corresponding classes of monotone maps are introduced below.

\begin{definition}
Let $\Omega\subset\R^n$ be a bounded open set. Let $A_0\in \mathbb M_{\R^n}(p,\lambda,\Lambda)$ satisfy 
\begin{equation}\label{eq:A0}
|A_0(x,\xi)-A_0(x,\eta)|\le C\left[|\xi|^p + |\eta|^p\right]^{\frac{p-2}{p-1}} |\xi - \eta|^\frac{1}{p-1}\quad\text{for a.e.\ $x\in\R^n$ and every $\xi,\eta\in\R^n$},
\end{equation}
for some constant $C>0$. We set
\[
\M_\Omega(p,\lambda,\Lambda,A_0)\coloneqq\{A\in\M_{\R^n}(p,\lambda,\Lambda):A=A_0\text{ in $(\R^n\setminus\Omega)\times\R^n$}\}.
\]
\end{definition}

\begin{remark}\label{rem:A0}
The following remarks are in order.
\begin{itemize}
\item[(a)] The class $\M_\Omega(p,\lambda,\Lambda)$, which is used to treat the local case, consists of maps defined on $\Omega\times\R^n$, whereas $\M_\Omega(p,\lambda,\Lambda,A_0)$, which will be employed in the nonlocal setting, consists of maps defined on $\R^n\times\R^n$ that coincide with a prescribed map $A_0$ on $(\R^n\setminus\Omega)\times\R^n$.
\item[(b)]
The two classes also differ in their growth conditions outside $\Omega$. Indeed, for $A_0$ we require condition~\eqref{eq:A0}. 
As a consequence, taking into account Lemma~\ref{lem:classes}, one readily obtains that every map $A\in\M_\Omega(p,\lambda,\Lambda,A_0)$ satisfies
\begin{equation}\label{eq:A0-con}
|A(x,\xi)-A(x,\eta)|\le C\left[\mathbf{1}_\Omega(x) + |\xi|^p + |\eta|^p\right]^{\frac{p-2}{p-1}} |\xi - \eta|^\frac{1}{p-1}
\end{equation}
for a.e.\ $x\in\R^n$ and for every $\xi,\eta\in\R^n$, and for some constant $C>0$ depending only on $p$, $\lambda$, $\Lambda$, and $A_0$. 
Taking $\eta=0$ in~\eqref{eq:A0-con} and using (M1), we obtain
\begin{equation}\label{eq:A0-est}
|A(x,\xi)|\le C(\mathbf{1}_\Omega(x)+|\xi|^{p-1})\quad\text{for a.e.\ $x\in\R^n$ and for every $\xi\in\R^n$}
\end{equation}
for some constant $C>0$ depending only on $p$, $\lambda$, $\Lambda$, and $A_0$. This stronger condition is required in the nonlocal framework, where one has to deal with integrals of the form
\[
\int_{\R^n} A(x,\nabla^s u(x))\cdot \nabla^s v(x)\,\d x
\quad\text{for every $u,v\in H^{s,p}_0(\Omega)$},
\]
which are well defined provided that
\begin{equation}\label{eq:anablas}
A(\,\cdot\,,\nabla^s u)\in L^{p'}(\R^n;\R^n).
\end{equation}
Notice that if $\nabla^s u\in L^{p}(\R^n;\R^n)$,
then~\eqref{eq:A0-est} directly implies~\eqref{eq:anablas}.
\item[(c)] In view of Remark~\ref{rem:classes}(b), a natural choice of
$A_0\in \M_{\R^n}(p,\lambda,\Lambda)$ satisfying~\eqref{eq:A0} is
\[
A_0(\xi)\coloneqq |\xi|^{p-2}\xi \quad\text{for every $\xi\in\R^n$}.
\]
\end{itemize}
\end{remark}

We can now introduce the $H$-convergence problem in the nonlocal setting. 
Let $\Omega\subset\R^n$ be a bounded open set. 
Let $A_0\in \M_{\R^n}(p,\lambda,\Lambda)$ satisfy~\eqref{eq:A0} and let $A \in \M_\Omega (p,\lambda,\Lambda, A_0)$. 
For every $f\in H^{-s,p'}(\Omega)$ we consider the following Dirichlet problem
\begin{equation}\label{eq:nonlocalproblem}
\begin{cases}
-\div^s(A(\,\cdot\,,\nabla^s u))=f&\text{in }\Omega \\
u=0&\text{in }\R^n\setminus\Omega.
\end{cases}
\end{equation}
By proceeding along the same lines as in Lemma~\ref{lem:existence} and relying on Proposition~\ref{prop:poincare}, we derive the following result.

\begin{lemma}\label{lem:existencefrac}
Let $\Omega\subset\R^n$ be a bounded open set. Let $A_0\in \M_{\R^n}(p,\lambda,\Lambda)$ satisfy~\eqref{eq:A0} and let $A \in \M_\Omega (p,\lambda,\Lambda, A_0)$. 
For every $f\in H^{-s,p'}(\Omega)$ there exists a unique function $u\in H^{s,p}_0(\Omega)$ weak solution to problem~\eqref{eq:nonlocalproblem}, i.e.\ satisfying
\begin{equation*}
\int_{\R^n} A(x,\nabla^s u(x))\cdot \nabla^s v(x)\,\d  x=\langle f,v\rangle_{H^{-s,p'}(\Omega)\times H^{s,p}_0(\Omega)}\quad\text{for every $v\in H^{s,p}_0(\Omega)$}.
\end{equation*}
\end{lemma}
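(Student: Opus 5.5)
The plan is to apply the Browder--Minty theorem to the operator $\mathcal A^s\colon H^{s,p}_0(\Omega)\to H^{-s,p'}(\Omega)$ defined by
\[
\langle\mathcal A^s(u),v\rangle\coloneqq\int_{\R^n}A(x,\nabla^s u(x))\cdot\nabla^s v(x)\,\d x\quad\text{for every $u,v\in H^{s,p}_0(\Omega)$},
\]
exactly as in Lemma~\ref{lem:existence}. The first step is to check that $\mathcal A^s$ is well defined with values in the dual. By~\eqref{eq:A0-est} we have $|A(x,\nabla^s u)|\le C(\mathbf 1_\Omega+|\nabla^s u|^{p-1})$, and this function belongs to $L^{p'}(\R^n)$ because $\Omega$ is bounded and $\nabla^s u\in L^p(\R^n;\R^n)$. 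Hölder's inequality then shows that $v\mapsto\langle\mathcal A^s(u),v\rangle$ is linear and bounded by $C(|\Omega|^{1/p'}+\|\nabla^s u\|_{L^p}^{p-1})\|v\|_{H^{s,p}_0(\Omega)}$. Measurability of $x\mapsto A(x,\nabla^s u(x))$ follows from the Carathéodory property.

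The second step establishes strict monotonicity and coercivity. Condition (M2), which holds on all of $\R^n$ since $A\in\M_{\R^n}(p,\lambda,\Lambda)$, gives
\[
\langle\mathcal A^s(u)-\mathcal A^s(v),u-v\rangle\ge\lambda\|\nabla^s(u-v)\|_{L^p(\R^n)}^p .
\]
This quantity vanishes only if $\nabla^s(u-v)=0$, and then $u=v$ by the Poincaré inequality (Proposition~\ref{prop:poincare}). Taking $v=0$ and using (M1), we get $\langle\mathcal A^s(u),u\rangle\ge\lambda\|\nabla^s u\|^p_{L^p}\ge c\|u\|^p_{H^{s,p}_0(\Omega)}$, again by Proposition~\ref{prop:poincare}. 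Since $p\ge2>1$, this is coercive.

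The third step, which I expect to be the main technical point, is continuity (or at least hemicontinuity). Let $u_k\to u$ in $H^{s,p}_0(\Omega)$. Then $\nabla^s u_k\to\nabla^s u$ in $L^p(\R^n)$, and by~\eqref{eq:A0-con} combined with Hölder's inequality,
\[
\|A(\cdot,\nabla^s u_k)-A(\cdot,\nabla^s u)\|_{L^{p'}}^{p'}\le C\int_{\R^n}\big[\mathbf 1_\Omega+|\nabla^s u_k|^p+|\nabla^s u|^p\big]^{\frac{p-2}{p-1}}|\nabla^s(u_k-u)|^{\frac{p'}{p-1}}\,\d x .
\]
Here $p'/(p-1)=p/(p-1)^2$ and $\frac{p-2}{p-1}p'=\frac{p(p-2)}{(p-1)^2}$. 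Hölder's inequality with exponents $\frac{(p-1)^2}{p-2}$ and $(p-1)^2$ then bounds the right-hand side by a bounded factor times $\|\nabla^s(u_k-u)\|_{L^p}^{p/(p-1)^2}\to0$. The term $\mathbf 1_\Omega$ causes no trouble because it is integrable, and for $p=2$ the bracket simply drops out. Alternatively, one can argue by dominated convergence along a.e.-convergent subsequences. In either case $\mathcal A^s$ is continuous.

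Finally, since $H^{s,p}_0(\Omega)$ is a reflexive separable Banach space for $p\in(1,\infty)$, the Browder--Minty theorem shows that $\mathcal A^s$ is surjective, and strict monotonicity gives injectivity. Hence for each $f\in H^{-s,p'}(\Omega)$ there is exactly one $u\in H^{s,p}_0(\Omega)$ with $\mathcal A^s(u)=f$, which is precisely the weak formulation in the statement.
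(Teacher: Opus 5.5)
Your proposal is correct and follows the paper's route. The paper proves this lemma by repeating the Browder--Minty argument of Lemma~\ref{lem:existence}: strict monotonicity and coercivity from (M1)--(M2), continuity from the growth bounds, and Proposition~\ref{prop:poincare} in place of the classical Poincaré inequality. That is exactly what you do.

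There is one small arithmetic slip in your continuity estimate. The bracket should carry the exponent $\frac{p(p-2)}{(p-1)^2}$, and the conjugate Hölder exponents are $\frac{(p-1)^2}{p(p-2)}$ and $(p-1)^2$, as in the proof of Proposition~\ref{prop:loc-nonloc}. Your $\frac{(p-1)^2}{p-2}$ is not conjugate to $(p-1)^2$ when $p>2$. The conclusion is unaffected.
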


In the previous setting, let $A_h\in \M_\Omega (p,\lambda,\Lambda, A_0)$ for $h\in \N\cup\{\infty\}$. 
We define the operators $\mathcal A_h^s\colon H^{s,p}_0(\Omega)\to H^{-s,p'}(\Omega)$ by
\begin{equation}\label{eq:Ahs}
\langle\mathcal A_h^s(u),v\rangle_{H^{-s,p'}(\Omega)\times H^{s,p}_0(\Omega)}\coloneqq \int_{\R^n} A_h(x,\nabla^s u(x))\cdot \nabla^s v(x)\, \d x\quad\text{for every $u, v \in H^{s,p}_0 (\Omega)$}.
\end{equation}
By Lemma~\ref{lem:existencefrac}, each operator $\mathcal A_h^s$ is invertible, and we can consider
\begin{equation}\label{eq:Bhs}
\mathcal B_h^s\coloneqq (\mathcal A_h^s)^{-1}\colon H^{-s,p'}(\Omega)\to H^{s,p}_0(\Omega).
\end{equation}
We also define the operators $\mathcal M_h^s\colon H^{-s,p'}(\Omega)\to L^{p'}(\R^n;\R^n)$ as 
\begin{equation}\label{eq:Mhs}
\mathcal M_h^s(f)(x)\coloneqq A_h(x,\nabla^s \mathcal B_h^s(f)(x))\quad\text{for a.e.\ $x\in\R^n$},
\end{equation}
which are well defined in virtue of~\eqref{eq:A0-est}.

\begin{definition}\label{def:nonlocal_H-convergence}
Let $\Omega\subset\R^n$ be a bounded open set. 
Let $A_0\in \M_{\R^n}(p,\lambda,\Lambda)$ satisfy~\eqref{eq:A0} and let $A_h\in\M_\Omega (p,\lambda,\Lambda, A_0)$ for $h\in \N\cup\{\infty\}$. 
We say that 
\[
\text{$\mathcal A_h^s$ $H$-converges to $\mathcal A^s_\infty$ as $h\to\infty$}
\]
if for every $f\in H^{-s,p'}(\Omega)$ we have:
\begin{itemize}
\item[(NH1)] $\mathcal B_h^s(f)\to \mathcal B_\infty^s(f)$ weakly in $H^{s,p}_0(\Omega)$ as $h\to\infty$, 
\item[(NH2)] $\mathcal M_h^s(f)\to \mathcal M_\infty^s(f)$ weakly in $L^{p'}(\R^n;\R^n)$ as $h\to\infty$.
\end{itemize}
\end{definition}


\section{Equivalence between \texorpdfstring{$H$}{H}-convergence of local and nonlocal operators}\label{sec:H-equiv}

From now on, given $A_0 \in \M_{\R^n}(p,\lambda,\Lambda)$ satisfying~\eqref{eq:A0} and $A \in \M_\Omega(p,\lambda,\Lambda,A_0)$, we simply write $A \in \M_\Omega(p,\lambda,\Lambda)$ when referring only to the restriction of $A$ to $\Omega \times \R^n$. 
Conversely, if $A \in \M_\Omega(p,\lambda,\Lambda)$, we denote by $A \in \M_\Omega(p,\lambda,\Lambda,A_0)$ its extension to the whole $\R^n \times \R^n$ obtained by setting $A = A_0$ on $(\R^n \setminus \Omega)\times \R^n$.

The main result of this section is the equivalence between local and nonlocal $H$-convergence.

\begin{theorem}\label{thm:Hs}
Let $\Omega\subset\R^n$ be a bounded open set satisfying $|\partial\Omega|=0$. Let $A_0\in \M_{\R^n}(p,\lambda,\Lambda)$ satisfy~\eqref{eq:A0} and $A_h\in\M_\Omega (p,\lambda,\Lambda, A_0)$ for $h\in \N\cup\{\infty\}$. Then 
\[
\text{$\mathcal A_h^s$ $H$-converges to $\mathcal A_\infty^s$ as $h\to\infty$},
\]
if and only if
\[
\text{$\mathcal A_h$ $H$-converges to $\mathcal A_\infty$ as $h\to\infty$}.
\]
\end{theorem}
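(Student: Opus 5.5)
The plan is to reduce both implications to a compactness-plus-uniqueness scheme, with Proposition~\ref{prop:equiv} and Corollary~\ref{coro:equiv} as the bridge between $\nabla^s$ and $\nabla$. The first step is a uniqueness lemma for nonlocal $H$-limits. If $A_\infty,\tilde A_\infty\in\M_\Omega(p,\lambda,\Lambda,A_0)$ satisfy $\mathcal M^s_\infty(f)=\tilde{\mathcal M}^s_\infty(f)$ and $\mathcal B^s_\infty(f)=\tilde{\mathcal B}^s_\infty(f)$ for every $f$, the goal is $A_\infty=\tilde A_\infty$ a.e.\ in $\Omega\times\R^n$.

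Equality of the operators gives $A_\infty(x,\nabla^s u)=\tilde A_\infty(x,\nabla^s u)$ a.e.\ for every $u\in H^{s,p}_0(\Omega)$, since every such $u$ is $\mathcal B^s_\infty(\mathcal A^s_\infty u)$. To identify the maps I need, for each $\xi\in\R^n$ (say in a countable dense set) and each $x_0\in\Omega$, some $u$ with $\nabla^s u=\xi$ near $x_0$. I will build it by taking $v=\cutoff\cdot(\xi\cdot x)$ with a cutoff equal to $1$ near $x_0$, then setting $u=(-\Delta)^{\frac{1-s}{2}}v$ via Proposition~\ref{prop:equiv}(ii). This gives $\nabla^s u=\xi$ near $x_0$, but $u\notin H^{s,p}_0(\Omega)$. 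The correction is to multiply by a cutoff $\varphi\in C_c^\infty(\Omega)$ and use the Leibniz rule, Proposition~\ref{prop:Leibniz}. The error terms $u\nabla^s\varphi+\nabla^s_{\rm NL}(\varphi,u)$ are not zero, so I instead argue by density. The set $\{\nabla^s u(x):u\in H^{s,p}_0(\Omega)\}$ is rich enough near a.e.\ point, and continuity in $\xi$ from (M3)$''$ closes the argument. I expect this uniqueness step to be the main obstacle (cf.\ Remark~\ref{rem:uniqueness}). A fallback is to use $|\partial\Omega|=0$ and a Lebesgue-point/blow-up argument, so that the remainder terms become small on small balls.

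The implication local $\Rightarrow$ nonlocal goes as follows. Fix $f$ and let $u_h=\mathcal B^s_h(f)$. These are bounded by coercivity and Proposition~\ref{prop:poincare}, so up to subsequences $u_h\rightharpoonup u$ in $H^{s,p}_0(\Omega)$ and $\mathcal M^s_h(f)\rightharpoonup\sigma$ in $L^{p'}$. Outside $\Omega$, Proposition~\ref{prop:strong-gradient} gives strong convergence of $\nabla^s u_h$, hence $\sigma=A_0(\cdot,\nabla^s u)$ there by continuity of $A_0$ and \eqref{eq:A0}. Inside $\Omega$, Corollary~\ref{coro:equiv} provides $w_h\in W^{1,p}(\Omega)$ with $\nabla w_h=\nabla^s u_h$ and $w_h\to w$ strongly in $L^p$. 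The fields $A_h(\cdot,\nabla w_h)$ in $\Omega$ then satisfy a local equation of the form $-\div(\cdot)=g_h$ locally, with $g_h$ compact in $W^{-1,p'}_{\rm loc}$. Compactness of $g_h$ comes from testing against $\varphi v$ for $\varphi\in C_c^\infty(\Omega)$, using the Leibniz rule and strong exterior convergence. The local $H$-convergence of $\mathcal A_h$, in its localized form (the standard consequence that if $\nabla w_h\rightharpoonup\nabla w$ and the divergences are compact then the fluxes converge to $A_\infty(\cdot,\nabla w)$), yields $\sigma=A_\infty(\cdot,\nabla^s u)$ in $\Omega$. Passing to the limit in the weak formulation then gives $u=\mathcal B^s_\infty(f)$. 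The limit is unique, so the whole sequence converges and (NH1)--(NH2) hold.

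For nonlocal $\Rightarrow$ local I argue by compactness. By Proposition~\ref{prop:Hcomploc}, every subsequence of $\mathcal A_h$ has a further subsequence that $H$-converges to some $\mathcal A_{\hat A}$ with $\hat A\in\M_\Omega(p,\lambda,\Lambda)$. Extending $\hat A$ by $A_0$, the previous step gives nonlocal $H$-convergence to $\mathcal A^s_{\hat A}$. By hypothesis the nonlocal limit is also $\mathcal A^s_\infty$, so the uniqueness lemma forces $\hat A=A_\infty$. Since this identification holds for every subsequence, the whole sequence converges and $\mathcal A_h$ $H$-converges to $\mathcal A_\infty$.
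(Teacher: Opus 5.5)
\textbf{Gap: the uniqueness lemma.} You flag uniqueness of nonlocal $H$-limits as the main obstacle, and it is not proved. The converse implication rests entirely on it. Saying that $\{\nabla^s u(x):u\in H^{s,p}_0(\Omega)\}$ is ``rich enough near a.e.\ point'' is not an argument. You also keep aiming for $\nabla^s u=\xi$ on a \emph{neighbourhood} of $x_0$, which, as you noticed, cannot be arranged for $u\in H^{s,p}_0(\Omega)$.

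The missing idea is that the value at the single point $x_0$ suffices. This works once the identity $A_\infty(x,\nabla^s\varphi(x))=\widehat A_\infty(x,\nabla^s\varphi(x))$ holds outside a null set that does not depend on $\varphi$. That independence comes from separability and continuity. Take a countable $\mathcal D\subset C_c^\infty(\Omega)$ dense for $\|\varphi\|_{L^\infty}+\|\nabla\varphi\|_{L^\infty}$. Since $|\nabla^s\varphi(x)|\lesssim\|\varphi\|_{L^\infty}+\|\nabla\varphi\|_{L^\infty}$ at every $x$, and $A_\infty(x,\cdot)$, $\widehat A_\infty(x,\cdot)$ are continuous, the identity passes from $\mathcal D$ to every $\varphi\in C_c^\infty(\Omega)$ at every $x$ outside one null set $N$.

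Then use that for every $x_0\in\Omega$ and $\xi\in\R^n$ there is $\varphi\in C_c^\infty(\Omega)$ with $\nabla^s\varphi(x_0)=\xi$. This is \cite[Lemma~4.3]{KrSc22}, which the paper uses. It is also elementary: $\varphi\mapsto\nabla^s\varphi(x_0)$ is linear, and a nonnegative bump concentrated near $x_0+te_i$ gives a vector pointing approximately in the direction $e_i$. Evaluating at $x_0\notin N$ yields $A_\infty(x_0,\xi)=\widehat A_\infty(x_0,\xi)$.

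\textbf{Your fallback, and where $|\partial\Omega|=0$ is actually used.} The fallback can also be made rigorous, but the mechanism is scaling, not Lebesgue points, and $|\partial\Omega|=0$ plays no role there. Let $v_r=\eta_r\,\xi\cdot(x-x_0)$ be localized at scale $r$, and let $\chi\in C_c^\infty(B_{2R}(x_0))$ with $B_{2R}(x_0)\subset\Omega$ and $\chi=1$ on $B_R(x_0)$. Then $\|(-\Delta)^{\frac{1-s}{2}}v_r\|_{L^\infty}\lesssim|\xi|r^s$, so both Leibniz remainders are $O(|\xi|(r/R)^s)$ uniformly on $B_r(x_0)$. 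Condition (M3)$''$ and a covering argument then conclude, but you would have to supply this estimate.

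Where $|\partial\Omega|=0$ is genuinely needed is your local $\Rightarrow$ nonlocal step, and you do not mention it there. Proposition~\ref{prop:strong-gradient} identifies $\sigma$ only on $\R^n\setminus\overline\Omega$, so the boundary must be null before you pass to the limit in the weak formulation.

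\textbf{The rest of the plan.} Apart from that, your local $\Rightarrow$ nonlocal step is sound. It is a legitimate repackaging of the paper's explicit div--curl argument with Lemma~\ref{lem:key-con}. The local divergences are indeed precompact in $W^{-1,p'}(\Omega')$ for $\Omega'\subset\subset\Omega$. The reason is that the remainder map $\psi\mapsto\tilde\psi\,\nabla^s\chi+\nabla^s_{\rm NL}(\chi,\tilde\psi)$, with $\tilde\psi=(-\Delta)^{\frac{1-s}{2}}\psi$, is compact from $W^{1,p}_0(\Omega')$ into $L^p$ by \eqref{eq:frac-lap-est} and \eqref{eq:nabla_NL}. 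Your compactness-plus-Urysohn argument for the converse matches the paper.
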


We divide the proof of Theorem~\ref{thm:Hs} into two steps. 
First, we show that $H$-convergence in the local setting implies the corresponding notion in the nonlocal framework. 
Then, by exploiting well-known properties of local $H$-convergence, we show the converse implication.

To prove the first implication, we need the following technical lemma, which is well-known in the $H$-convergence literature; see, for instance,~\cite[Lemma~7.8]{CPDMDF}.

\begin{lemma}\label{lem:key-con}
Let $\gamma,\delta\in [0,1]$ be such that $\gamma+\delta\le 1$. Let $(a_h)_h\subset L^1(\Omega)$, $(b_h)_h\subset L^1(\Omega)$, and $(c_h)_h\subset L^1(\Omega)$ be such that for every $h\in\N$
\[
a_h,b_h\ge 0\quad\text{a.e.\ in $\Omega$},\qquad |c_h|\le a_h^\gamma b_h^\delta\quad\text{a.e.\ in $\Omega$}.
\]
Assume the existence of $a,b,c\in L^1(\Omega)$ such that as $h\to\infty$
\[
a_h\to a\quad\text{weakly* in $\mathcal D'(\Omega)$},\quad b_h\to b\quad\text{weakly* in $\mathcal D'(\Omega)$},\quad c_h\to c\quad\text{weakly* in $\mathcal D'(\Omega)$}.
\]
Then
\[
|c|\le a^\gamma b^\delta\quad\text{a.e.\ in $\Omega$}.
\]
\end{lemma}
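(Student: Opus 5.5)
The plan is to reduce the pointwise inequality to a family of integral inequalities against nonnegative test functions, using the convexity of a suitable function of $(a,b,c)$, and then pass to the limit by weak lower semicontinuity. The first step is to handle the degenerate exponents. If $\gamma+\delta<1$, the relation $|c_h|\le a_h^\gamma b_h^\delta$ can be rewritten as $|c_h|\le a_h^\gamma b_h^\delta 1^{1-\gamma-\delta}$, so we add a third constant sequence $e_h\equiv 1$ and reduce to the homogeneous case $\gamma+\delta+\varepsilon=1$. Note that the same argument works with any finite number of factors. The cases where some exponent vanishes are included, with $t^0=1$.

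The key observation is a Young-type inequality. For $\gamma+\delta=1$ with $\gamma,\delta>0$ and every $t>0$,
\[
a^\gamma b^\delta=\min_{t>0}\bigl(\gamma t^{1/\gamma}\cdots\bigr),
\]
or, more concretely, $a^\gamma b^\delta\le \gamma t^{-\delta/\gamma}\cdots$. The cleanest form is
\[
a^\gamma b^\delta=\inf_{\tau>0}\bigl(\gamma\tau^{-\delta} a+\delta\tau^{\gamma} b\bigr).
\]
Hence for every $\tau>0$ and every $\sigma\in\{-1,1\}$ we get $\sigma c_h\le \gamma\tau^{-\delta}a_h+\delta\tau^\gamma b_h$ a.e. These are \emph{linear} inequalities in $(a_h,b_h,c_h)$.

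Next, test each of them with $\phi\in C_c^\infty(\Omega)$, $\phi\ge0$, and pass to the limit using the $\mathcal D'$ convergence. This gives $\int \phi(\gamma\tau^{-\delta}a+\delta\tau^\gamma b-\sigma c)\ge0$ for all such $\phi$. Since $a,b,c\in L^1$, we conclude that $\sigma c\le\gamma\tau^{-\delta}a+\delta\tau^\gamma b$ a.e., for each fixed $\tau$ and $\sigma$.

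The remaining issue, which I expect to be the main (though mild) obstacle, is to take the infimum over $\tau$ while keeping a single null set. I would restrict to $\tau\in\mathbb Q_{>0}$, so that there are countably many exceptional null sets. By continuity in $\tau$, the infimum over rationals equals the infimum over all $\tau>0$, which is $a^\gamma b^\delta$. This also covers the points where $a$ or $b$ vanishes, since there the infimum is $0$. We also need $a,b\ge0$ a.e. in the limit, which follows from testing with nonnegative $\phi$ in the same way.

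The endpoint cases are handled separately. If $\gamma=1,\delta=0$, the inequality $\pm c_h\le a_h$ passes directly to the limit. If $\gamma=\delta=0$, the inequality $\pm c_h\le 1$ does as well. Combining the two signs $\sigma$ yields $|c|\le a^\gamma b^\delta$ a.e. in $\Omega$.
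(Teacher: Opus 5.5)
Your argument is correct. The paper gives no proof of this lemma; it only refers to \cite{CPDMDF} (Lemma~7.8). The usual proof of statements of this type goes differently from yours.

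In that usual argument, for $\phi\in C_c^\infty(\Omega)$ with $\phi\ge0$, one writes $a_h^\gamma b_h^\delta\phi=(a_h\phi)^\gamma(b_h\phi)^\delta\phi^{1-\gamma-\delta}$ and applies H\"older's inequality. This gives $\bigl|\int_\Omega c_h\phi\,\d x\bigr|\le\bigl(\int_\Omega a_h\phi\,\d x\bigr)^\gamma\bigl(\int_\Omega b_h\phi\,\d x\bigr)^\delta\bigl(\int_\Omega\phi\,\d x\bigr)^{1-\gamma-\delta}$. One then passes to the limit in each factor and extends the limit inequality to $\phi=\mathbf{1}_{B_r(x)}$ by approximation. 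Finally one divides by $|B_r(x)|$ and lets $r\to0$ at common Lebesgue points of $a,b,c$.

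You instead linearise the constraint. You represent the concave function $(a,b)\mapsto a^\gamma b^\delta$ as an infimum of linear forms via weighted AM--GM. You pass to the limit in countably many \emph{linear} inequalities and recover the pointwise bound as a countable infimum.

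What each route buys:
\begin{itemize}
\item Your route needs no H\"older inequality and no Lebesgue differentiation, only the fact that a locally integrable function with nonnegative pairing against nonnegative test functions is nonnegative a.e.
\item It extends verbatim to any bound $|c_h|\le G(a_h,b_h)$ with $G$ concave and upper semicontinuous, written as an infimum of countably many affine functions.
\item The price is the auxiliary constant factor $e_h\equiv1$ when $\gamma+\delta<1$. The H\"older route absorbs this case through the factor $\bigl(\int_\Omega\phi\,\d x\bigr)^{1-\gamma-\delta}$.
\end{itemize}

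Two small points. First, delete the unfinished display involving $\min_{t>0}(\gamma t^{1/\gamma}\cdots)$; the representation $a^\gamma b^\delta=\inf_{\tau>0}(\gamma\tau^{-\delta}a+\delta\tau^\gamma b)$ is the one you actually use. Second, in Proposition~\ref{prop:loc-nonloc} the lemma is applied to vector-valued $c_h$, with an extra constant $\Lambda$ in front. There you should let $\sigma$ range over a countable dense subset of the unit sphere, using $\sigma\cdot c_h\le|c_h|$ and $\sup_\sigma\sigma\cdot c=|c|$. The H\"older argument covers the vector case automatically.
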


We are in position to prove the following result.

\begin{proposition}\label{prop:loc-nonloc}
Let $\Omega\subset\R^n$ be a bounded open set satisfying $|\partial\Omega|=0$. Let $A_0\in \M_{\R^n}(p,\lambda,\Lambda)$ satisfy~\eqref{eq:A0} and let $A_h\in\M_\Omega (p,\lambda,\Lambda, A_0)$ for $h\in \N\cup\{\infty\}$. 
Assume that
\[
\text{$\mathcal A_h$ $H$-converges to $\mathcal A_\infty$ as $h\to\infty$}.
\]
Then
\[
\text{$\mathcal A_h^s$ $H$-converges to $\mathcal A_\infty^s$ as $h\to\infty$}.
\]
\end{proposition}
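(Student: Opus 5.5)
Fix $f\in H^{-s,p'}(\Omega)$ and set $u_h\coloneqq\mathcal B_h^s(f)$, $\sigma_h\coloneqq\mathcal M_h^s(f)=A_h(\cdot,\nabla^s u_h)$. The plan is to identify every subsequential limit of $(u_h,\sigma_h)$ with $(\mathcal B^s_\infty(f),\mathcal M^s_\infty(f))$. By Lemma~\ref{lem:existencefrac}, the monotonicity (M1)--(M2), Proposition~\ref{prop:poincare} and \eqref{eq:A0-est}, we get uniform bounds: $(u_h)_h$ is bounded in $H^{s,p}_0(\Omega)$ and $(\sigma_h)_h$ is bounded in $L^{p'}(\R^n;\R^n)$. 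Along a subsequence, $u_h\to u$ weakly in $H^{s,p}_0(\Omega)$ and $\sigma_h\to\sigma$ weakly in $L^{p'}$. Passing to the limit in the weak formulation gives $\int_{\R^n}\sigma\cdot\nabla^s v\,\d x=\langle f,v\rangle$ for all $v$.

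\textbf{The exterior region.} Outside $\Omega$ we have $A_h=A_0$, and Proposition~\ref{prop:strong-gradient} gives $\nabla^s u_h\to\nabla^s u$ strongly in $L^p_{\loc}(\R^n\setminus\overline\Omega)$. By the continuity estimate \eqref{eq:A0}, this yields $\sigma=A_0(\cdot,\nabla^s u)$ a.e.\ in $\R^n\setminus\overline\Omega$, and hence a.e.\ outside $\Omega$ because $|\partial\Omega|=0$. It therefore suffices to show that $\sigma=A_\infty(\cdot,\nabla^s u)$ a.e.\ in $\Omega$. The uniqueness in Lemma~\ref{lem:existencefrac} then identifies $u=\mathcal B^s_\infty(f)$ and $\sigma=\mathcal M_\infty^s(f)$, so the whole sequence converges.

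\textbf{The interior region.} Inside $\Omega$ we transfer to the local setting via Corollary~\ref{coro:equiv}. This gives $w_h\in W^{1,p}(\Omega)$ with $\nabla w_h=\nabla^s u_h$ in $\Omega$, $w_h\to w$ weakly in $W^{1,p}(\Omega)$, and $\nabla w=\nabla^s u$. We then use the local $H$-convergence by testing with localized local solutions. For an open $\omega\subset\subset\Omega$ (or a ball), a vector $\xi$ and a cutoff, we let $z_h\in W^{1,p}_0(\Omega)$ solve $\mathcal A_h(z_h)=\mathcal A_\infty(z_\infty)$, where $z_\infty$ is chosen with $\nabla z_\infty=\xi$ on $\omega$ (standard in the local theory). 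By (H1)--(H2), $z_h\to z_\infty$ weakly and $A_h(\cdot,\nabla z_h)\to A_\infty(\cdot,\nabla z_\infty)$ weakly in $L^{p'}$.

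We then consider, for $\varphi\in C_c^\infty(\Omega)$ with $\varphi\ge0$, the monotonicity quantity
\[
\int_\Omega \varphi\,(\sigma_h-A_h(x,\nabla z_h))\cdot(\nabla^s u_h-\nabla z_h)\,\d x\ \ge0 .
\]
The goal is to pass to the limit in it via a div--curl type argument.

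\textbf{The main obstacle: mixed terms.} The hardest part is the mixed term $\int\varphi\,\sigma_h\cdot\nabla z_h$, where $\sigma_h$ is only controlled through the nonlocal divergence and $\nabla z_h$ is a local gradient. To handle it, we write $\nabla z_h=\nabla^s\tilde z_h$ in the sense of Proposition~\ref{prop:equiv}(ii), with $\tilde z_h\coloneqq(-\Delta)^{\frac{1-s}{2}}z_h$ (extending $z_h$ by zero). Then $\varphi\tilde z_h\in H^{s,p}_0(\Omega)$ is an admissible test function, and the Leibniz rule (Proposition~\ref{prop:Leibniz}) gives
\[
\varphi\nabla z_h=\nabla^s(\varphi\tilde z_h)-\tilde z_h\nabla^s\varphi-\nabla^s_{\rm NL}(\varphi,\tilde z_h).
\]
The estimate \eqref{eq:frac-lap-est} and Rellich's theorem yield $\tilde z_h\to\tilde z$ strongly in $L^p$, hence the two remainder terms converge strongly by \eqref{eq:nabla_NL}. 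Combined with the equation tested against $\varphi\tilde z_h$, this lets us pass to the limit.

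The symmetric term $\int\varphi A_h(x,\nabla z_h)\cdot\nabla^s u_h$ is treated by the same device with the roles exchanged. We write $\nabla^s u_h=\nabla w_h$ locally, test the local equation with $\varphi w_h$, and use strong $L^p$ convergence of $w_h$. The diagonal terms are handled directly: $\int\varphi\,\sigma_h\cdot\nabla^s u_h$ via the nonlocal equation, the Leibniz rule with $\varphi u_h$, and strong $L^p$ convergence of $u_h$ (Proposition~\ref{prop:rellich}); and $\int\varphi A_h(x,\nabla z_h)\cdot\nabla z_h$ via the local equation.

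\textbf{Conclusion.} The limiting inequality is $(\sigma-A_\infty(x,\xi))\cdot(\nabla^s u-\xi)\ge0$ a.e.\ on $\omega$, for every $\xi$ in a countable dense set. Minty's trick, using the continuity of $A_\infty$ from (M3)$''$, then gives $\sigma=A_\infty(x,\nabla^s u)$ a.e.\ in $\Omega$. Lemma~\ref{lem:key-con} is not needed for this identification, but it could serve to control the error terms if a direct approach is preferred.

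I expect the key difficulty to be the mixed-term limit, specifically ensuring that the nonlocal remainders converge strongly despite $\tilde z_h$ being defined on all of $\R^n$.
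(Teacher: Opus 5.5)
Your proposal is correct and follows the same architecture as the paper's proof. The exterior is identified through Proposition~\ref{prop:strong-gradient}. The interior is identified by testing against local solutions and passing to the limit in the four products; your diagonal and mixed terms are exactly \eqref{eq:H1}--\eqref{eq:H4}, handled with the same tools (Corollary~\ref{coro:equiv}, $(-\Delta)^{\frac{1-s}{2}}$ with \eqref{eq:frac-lap-est}, and the Leibniz rule with \eqref{eq:nabla_NL}). Your choice $g=\mathcal A_\infty(z_\infty)$ plays the role of the paper's appeal to the bijectivity of $\mathcal B_\infty$.

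The genuine difference is the closing step. The paper passes the full structural inequality (M3) to the limit through Lemma~\ref{lem:key-con}, obtaining \eqref{eq:momenta}. It then inserts a test function with $\nabla\varphi=\nabla^s\widetilde{\mathcal B}^s_\infty(f)(x_0)$ near $x_0$, which makes the right-hand side vanish. You pass only the nonnegativity coming from (M2) and conclude by Minty's trick over countably many pairs $(\omega,\xi)$.

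Your route is more elementary: it needs only monotonicity and the continuity of $A_\infty(x,\cdot)$. Your countable family also makes explicit the choice of a common null set, which the paper only asserts before choosing $\varphi$ depending on $x_0$. The paper's route is the standard Chiad\`o Piat--Dal Maso--Defranceschi mechanism, and it yields as a by-product the (M3)-type compatibility inequality between the limit flux and $A_\infty$.

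Similarly, you extract subsequences for each fixed $f$ and conclude by uniqueness in Lemma~\ref{lem:existencefrac} plus the Urysohn property. This replaces the paper's H\"older estimates in $f$, which serve only to extract a single subsequence valid for all $f$; both approaches work.
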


\begin{proof}
For every $h\in \N\cup\{\infty\}$, let $\mathcal A_h$, $\mathcal B_h$, $\mathcal M_h$, and $\mathcal A_h^s$, $\mathcal B_h^s$, $\mathcal M_h^s$
denote the differential operators introduced in~\eqref{eq:Ah},~\eqref{eq:Bh},~\eqref{eq:Mh}, and~\eqref{eq:Ahs},~\eqref{eq:Bhs},~\eqref{eq:Mhs}, respectively.

By (M2), for every $f_1,f_2\in H^{-s,p'}(\Omega)$ we obtain
\begin{equation*}
\|\nabla^s \mathcal B_h^s(f_1)-\nabla^s \mathcal B_h^s(f_2)\|_{L^p(\R^n)}^p
\le\lambda^{-1}\|f_1-f_2\|_{H^{-s,p'}(\Omega)}\|\mathcal B_h^s(f_1)-\mathcal B_h^s(f_2)\|_{H^{s,p}_0(\Omega)}.
\end{equation*}
Hence, in view of Proposition~\ref{prop:poincare}, there exists a constant $C>0$, depending only on $n,s,p,\Omega$, and $\lambda$, such that
\begin{equation}\label{eq:Bhs-estimate}
\|\mathcal B_h^s(f_1)-\mathcal B_h^s(f_2)\|_{H^{s,p}_0(\Omega)}
\le
C\|f_1-f_2\|_{H^{-s,p'}(\Omega)}^{\frac{1}{p-1}}
\quad
\text{for every } f_1,f_2\in H^{-s,p'}(\Omega).
\end{equation}
Similarly, by (M1) and (M2), for every $f_1,f_2\in H^{-s,p'}(\Omega)$ we obtain
\begin{equation}\label{eq:Bhs-estimate2}
\|\mathcal B_h^s(f_i)\|_{H^{s,p}_0(\Omega)}
\le
C\|f_i\|_{H^{-s,p'}(\Omega)}^{\frac{1}{p-1}}
\quad
\text{for every } i\in \{1,2\}.
\end{equation}
Moreover, by~\eqref{eq:A0-con} and Hölder’s inequality, with conjugate exponents 
\[
\frac{(p-1)^2}{p(p-2)}\quad\text{and}\quad(p-1)^2,
\]
it follows that, for every $f_1,f_2\in H^{-s,p'}(\Omega)$,
\begin{align*}
&\|\mathcal M_h^s(f_1)-\mathcal M_h^s(f_2)\|_{L^{p'}(\R^n)}^{p'}\\
&\le
C\int_{\R^n}(\mathbf{1}_\Omega(x)+|\nabla^s \mathcal B_h^s(f_1)(x)|^p+|\nabla^s \mathcal B_h^s(f_2)(x)|^p)^{\frac{p(p-2)}{(p-1)^2}}
|\nabla^s \mathcal B_h^s(f_1)(x)- \nabla^s \mathcal B_h^s(f_2)(x)|^{\frac{p}{(p-1)^2}\,}\d x\\
&\le
C\left(|\Omega|+\|\mathcal B_h^s(f_1)\|_{H^{s,p}_0(\R^n)}^p+\|\mathcal B_h^s(f_2)\|_{H^{s,p}_0(\R^n)}^p\right)^{\frac{p(p-2)}{(p-1)^2}}
\|\mathcal B_h^s(f_1)- \mathcal B_h^s(f_2)\|_{H^{s,p}_0(\R^n)}^{\frac{p}{(p-1)^2}},
\end{align*}
where $C>0$ depends only on $p$, $\lambda$, $\Lambda$, and $A_0$. 
Combining this estimate with~\eqref{eq:Bhs-estimate} and~\eqref{eq:Bhs-estimate2}, we conclude that for every $f_1,f_2\in H^{-s,p'}(\Omega)$
\begin{align*}
\|\mathcal M_h^s(f_1)-\mathcal M_h^s(f_2)\|_{L^{p'}(\R^n)}
\le
C\left(1+\|f_1\|_{H^{-s,p'}(\Omega)}^{p'}+\|f_2\|_{H^{-s,p'}(\Omega)}^{p'}\right)^{\frac{p-2}{p-1}}
\|f_1- f_2\|_{H^{-s,p'}(\Omega)}^{\frac{1}{(p-1)^2}},
\end{align*}
for some constant $C>0$ depending only on $n$, $s$, $p$, $\Omega$, $\lambda$, $\Lambda$, and $A_0$.

Consequently, by a standard density argument and up to a not relabeled subsequence, there exist two differential operators
\[
\widetilde{\mathcal B}_\infty^s \colon H^{-s,p'}(\Omega)\to H^{s,p}_0(\Omega),
\qquad
\widetilde{\mathcal M}_\infty^s \colon H^{-s,p'}(\Omega)\to L^{p'}(\R^n;\R^n),
\]
such that for every $f\in H^{-s,p'}(\Omega)$
\begin{align}
\mathcal B_h^s(f)
&\to \widetilde{\mathcal B}_\infty^s(f)
\quad\text{weakly in $H^{s,p}_0(\Omega)$ as $h\to\infty$}, \label{eq:NL1}\\
\mathcal M_h^s(f)
&\to \widetilde{\mathcal M}_\infty^s(f)
\quad\text{weakly in $L^{p'}(\R^n;\R^n)$ as $h\to\infty$}.
\label{eq:NL2}
\end{align}
To conclude the proof, it is sufficient to show that for every
$f\in H^{-s,p'}(\Omega)$
\begin{equation}\label{eq:claim}
\widetilde{\mathcal M}_\infty^s(f)(x) = A_\infty(x,\nabla^s \widetilde{\mathcal B}_\infty^s(f)(x))
\quad\text{for a.e.\ $x\in\R^n$}.
\end{equation}
Indeed, assuming~\eqref{eq:claim}, by passing to the limit as $h\to\infty$ in the equation~\eqref{eq:Ahs}, satisfied by $\mathcal B_h^s(f)$, we obtain
\begin{equation}\label{eq:uniq}
\int_{\R^n}
A_\infty(x,\nabla^s \widetilde{\mathcal B}_\infty^s(f)(x))
\cdot \nabla^s v(x)\,\mathrm d x
=
\langle f,v\rangle_{H^{-s,p'}(\Omega)\times H^{s,p}_0(\Omega)}
\quad
\text{for every $v\in H^{s,p}_0(\Omega)$}.
\end{equation}
By the uniqueness of solutions to~\eqref{eq:uniq}, it follows that
\[
\widetilde{\mathcal B}_\infty^s(f)=\mathcal B_\infty^s(f)\quad\text{for every $f\in H^{-s,p'}(\Omega)$}.
\]
Hence, denoting by $\mathcal M_\infty^s$ the operator introduced in~\eqref{eq:Mhs}, it readily follows that
\[
\widetilde{\mathcal M}_\infty^s(f)(x) = A_\infty(x,\nabla^s\mathcal B_\infty^s(f)(x)) = \mathcal M_\infty^s(f)(x) \quad
\text{for a.e.\ $x\in\R^n$}.
\]
In view of~\eqref{eq:NL1} and~\eqref{eq:NL2}, this proves that
$\mathcal A_h^s$ $H$-converges to $\mathcal A_\infty^s$ as $h\to\infty$.

Let us prove~\eqref{eq:claim}. 
Let $f\in H^{-s,p'}(\Omega)$ be fixed.
By Proposition~\ref{prop:strong-gradient}, we have
\[
\nabla^s \mathcal B_h(f)\to \nabla^s\widetilde{\mathcal B}_\infty(f)\quad\text{strongly in $L^p_{\rm loc}(\R^n\setminus\overline\Omega;\R^n)$ as $h\to\infty$},
\]
which implies that
\[
\mathcal M_h^s(f)=A_0(\,\cdot\,,\nabla^s\mathcal B_h(f))\to A_0(\,\cdot\,,\nabla^s\widetilde{\mathcal B}_\infty^s(f))\quad\text{strongly in $L^{p'}_{\rm loc}(\R^n\setminus\overline\Omega;\R^n)$ as $h\to\infty$}.
\]
Hence,
\begin{equation}\label{eq:id2}
\widetilde{\mathcal M}_\infty^s(f)(x_0)=A_0(x_0,\nabla^s \widetilde{\mathcal B}_\infty^s(f)(x_0))\quad\text{for a.e.\ $x_0\in\R^n\setminus\overline\Omega$}.
\end{equation}

Let $f\in H^{-s,p'}(\Omega)$ and $g\in W^{-1,p'}(\Omega)$ be fixed. For every $h\in\N$, we define the following functions $a_h,b_h,c_h\in L^1(\Omega)$:
\begin{align*}
a_h(x)&\coloneqq 1+\mathcal M_h^s(f)(x)\cdot\nabla^s \mathcal B_h^s(f)(x)+\mathcal M_h(g)(x)\cdot\nabla\mathcal B_h(g)(x)& &\text{for a.e.\ $x\in\Omega$},\\
b_h(x)&\coloneqq (\mathcal M_h^s(f)(x)-\mathcal M_h(g)(x))\cdot(\nabla^s \mathcal B_h^s(f)(x)-\nabla \mathcal B_h(g)(x))& &\text{for a.e.\ $x\in\Omega$},\\
c_h(x)&\coloneqq \mathcal M_h^s(f)(x)-\mathcal M_h(g)(x)& &\text{for a.e.\ $x\in\Omega$}.
\end{align*}
Note that, by (M2) and (M3)
\[
a_h\ge 0\quad\text{a.e.\ in $\Omega$},\qquad 
b_h\ge 0\quad\text{a.e.\ in $\Omega$},\qquad |c_h|
\le \Lambda a_h^{\frac{p-2}{p}} b_h^{\frac{1}{p}}\quad\text{a.e.\ in $\Omega$}.
\]
Since $\mathcal A_h$ $H$-converges to $\mathcal A_\infty$ as $h\to\infty$ by hypotheses, then
\begin{equation}\label{eq:ch}
c_h\to \widetilde{\mathcal M}_\infty^s(f)-\mathcal M_\infty(g)\quad\text{weakly in $L^{p'}(\Omega;\R^n)$ as $h\to\infty$}.
\end{equation}
Let us prove that the following convergences hold:
\begin{align}
\mathcal M_h(g)\cdot \nabla \mathcal B_h(g)&\to\mathcal M_\infty(g)\cdot \nabla \mathcal B_\infty(g)& &\text{weakly* in $\mathcal D'(\Omega)$ as $h\to\infty$},\label{eq:H1}\\
\mathcal M_h(g) \cdot \nabla^s \mathcal B_h^s(f)&\to\mathcal M_\infty(g) \cdot \nabla^s \widetilde{\mathcal B}_\infty^s(f)& &\text{weakly* in $\mathcal D'(\Omega)$ as $h\to\infty$},\label{eq:H2}\\
\mathcal M_h^s(f)\cdot \nabla^s \mathcal B_h^s(f)&\to \widetilde{\mathcal M}_\infty^s(f)\cdot \nabla^s \widetilde{\mathcal B}_\infty^s(f)& &\text{weakly* in $\mathcal D'(\Omega)$ as $h\to\infty$},\label{eq:H3}\\
\mathcal M_h^s(f)\cdot\nabla \mathcal B_h(g)&\to \widetilde{\mathcal M}_\infty^s(f)\cdot\nabla \mathcal B_\infty(g)& &\text{weakly* in $\mathcal D'(\Omega)$ as $h\to\infty$}.\label{eq:H4}
\end{align}
Let $\phi\in C_c^\infty(\Omega)$ be fixed. 
By the Rellich compactness theorem, it holds that
\begin{equation}\label{eq:ffinal}
\mathcal B_h(g)\to \mathcal B_\infty(g)\quad\text{weakly in $W^{1,p}_0(\Omega)$ and strongly in $L^p(\Omega)$ as $h\to\infty$}.
\end{equation}
In particular, letting $h\to\infty$, we get
\begin{align*}
&\int_{\Omega}\mathcal M_h(g)(x)\cdot \nabla \mathcal B_h(g)(x)\phi(x)\,\d x\\
&=\langle g,\mathcal B_h(g)\phi\rangle_{W^{-1,p'}(\Omega)\times W^{1,p}_0(\Omega)}-\int_{\Omega}\mathcal M_h(g)(x)\cdot \nabla \phi(x) \mathcal B_h(g)(x)\,\d x\\
&\to\langle g,\mathcal B_\infty(g)\phi\rangle_{W^{-1,p'}(\Omega)\times W^{1,p}_0(\Omega)}-\int_{\Omega}\mathcal M_\infty(g)(x)\cdot \nabla \phi(x) \mathcal B_\infty(g)(x)\,\d x\\
&=\int_{\Omega}\mathcal M_\infty(g)(x)\cdot \nabla \mathcal B_\infty(g)(x)\phi(x)\,\d x,
\end{align*}
which yields the validity of~\eqref{eq:H1}. 
By Corollary~\ref{coro:equiv}, there exist a not relabeled subsequence and functions $w_h\in W^{1,p}(\Omega)$ for $h\in\N\cup\{\infty\}$ satisfying
\[
\nabla w_h = \nabla^s \mathcal B_h^s(f) \quad \text{in $\Omega$ for every $h \in \N$},\qquad \nabla w_\infty = \nabla^s \widetilde{\mathcal B}^s_\infty(f) \quad \text{in $\Omega$},
\]
and
\[
w_h \to w_\infty \quad \text{weakly in $W^{1,p}(\Omega)$ and strongly in $L^p(\Omega)$ as $h \to \infty$}.
\]
Hence, arguing as in the proof of~\eqref{eq:H1}, we have as $h\to\infty$
\begin{align*}
&\int_{\Omega}\mathcal M_h(g)(x)\cdot \nabla^s \mathcal B_h^s(f)(x)\phi(x)\,\d x=\int_{\Omega}\mathcal M_h(g)(x)\cdot \nabla w_h(x)\phi(x)\,\d x\\
&\to\int_{\Omega}\mathcal M_\infty(g)(x)\cdot \nabla w_\infty(x)\phi(x)\,\d x=\int_{\Omega}\mathcal M_\infty(g)(x)\cdot \nabla^s \widetilde{\mathcal B}_\infty^s(f)(x)\phi(x)\,\d x,
\end{align*}
which shows~\eqref{eq:H2}. 
Let us prove~\eqref{eq:H3}.
By Proposition~\ref{prop:Leibniz} we have
\begin{align*}
&\int_{\Omega}\mathcal M_h^s(f)(x)\cdot \nabla^s \mathcal B_h^s(f)(x)\phi(x)\,\d x=\int_{\R^n}\mathcal M_h^s(f)(x)\cdot \nabla^s \mathcal B_h^s(f)(x)\phi(x)\,\d x\\
&=\langle f,\mathcal B_h^s(f)\phi\rangle_{H^{-s,p'}(\Omega)\times H^{s,p}_0(\Omega)}-\int_{\R^n}\mathcal M_h^s(f)(x)\cdot \nabla^s \phi(x) \mathcal B_h^s(f)(x)\,\d x\\
&\quad-\int_{\R^n}\mathcal M_h^s(f)(x)\cdot \nabla_{\rm NL}^s(\phi, \mathcal B_h^s(f))(x)\,\d x.
\end{align*}
Proposition~\ref{prop:rellich}, combined with~\eqref{eq:nabla_NL} and~\eqref{eq:NL1}, yields
\[
\nabla_{\rm NL}^s(\phi, \mathcal B_h^s(f))\to \nabla_{\rm NL}^s(\phi, \widetilde{\mathcal B}_\infty^s(f))\quad\text{strongly in $L^p(\R^n;\R^n)$ as $h\to\infty$}.
\]
Moreover,
\[
\int_{\R^n}\widetilde{\mathcal M}_\infty^s(f)(x)\cdot \nabla^s (\widetilde{\mathcal B}_\infty^s(f)\phi)(x)\,\d x=\langle f,\widetilde{\mathcal B}_\infty^s(f)\phi\rangle_{H^{-s,p'}(\Omega)\times H^{s,p}_0(\Omega)}.
\]
Therefore, letting $h\to\infty$, we obtain
\begin{align*}
&\int_{\Omega}\mathcal M_h^s(f)(x)\cdot \nabla^s \mathcal B_h^s(f)(x)\phi(x)\,\d x\\
&\to \langle f,\widetilde{\mathcal B}_\infty^s(f)\phi\rangle_{H^{-s,p'}(\Omega)\times H^{s,p}_0(\Omega)}-\int_{\R^n}\widetilde{\mathcal M}_\infty^s(f)(x)\cdot \nabla^s \phi(x) \widetilde{\mathcal B}_\infty^s(f)(x)\,\d x\\
&\quad-\int_{\R^n}\widetilde{\mathcal M}_\infty^s(f)(x)\cdot \nabla_{\rm NL}^s(\phi, \widetilde{\mathcal B}_\infty^s(f))(x)\,\d x\\
&=\int_{\R^n}\widetilde{\mathcal M}_\infty^s(f)(x)\cdot \nabla^s( \widetilde{\mathcal B}_\infty^s(f)\phi)(x)\,\d x-\int_{\R^n}\widetilde{\mathcal M}_\infty^s(f)(x)\cdot \nabla^s \phi(x) \widetilde{\mathcal B}_\infty^s(f)(x)\,\d x\\
&\quad-\int_{\R^n}\widetilde{\mathcal M}_\infty^s(f)(x)\cdot \nabla_{\rm NL}^s(\phi, \widetilde{\mathcal B}_\infty^s(f))(x)\,\d x\\
&=\int_{\R^n}\widetilde{\mathcal M}_\infty^s(f)(x)\cdot \nabla^s \widetilde{\mathcal B}_\infty^s(f)(x)\phi(x)\,\d x=\int_{\Omega}\widetilde{\mathcal M}_\infty^s(f)(x)\cdot \nabla^s \widetilde{\mathcal B}_\infty^s(f)(x)\phi(x)\,\d x,
\end{align*}
where the last equality follows from the fact that $\phi\in C_c^\infty(\Omega)$.
This proves~\eqref{eq:H3}. 
For every $h\in\N\cup\{\infty\}$, we trivially extend $v_h\in W^{1,p}_0(\Omega)$ to a function in $W^{1,p}(\R^n)$ and we define $v_h\coloneqq(-\Delta)^\frac{1-s}{2}\mathcal B_h(g)\in H^{s,p}(\R^n)$.
By Proposition~\ref{prop:equiv}(ii),
\[
\nabla^s v_h = \nabla \mathcal B_h(g) \quad \text{in $\Omega$ for every $h \in \N\cup\{\infty\}$}.
\]
Moreover, combining~\eqref{eq:frac-lap-est} with~\eqref{eq:ffinal}, we obtain
\[
v_h \to v_\infty \quad \text{weakly in $H^{s,p}(\R^n)$ and strongly in $L^p(\R^n)$ as $h \to \infty$}.
\]
Hence, by applying Proposition~\ref{prop:Leibniz}, we can argue as in~\eqref{eq:H3} to deduce, as $h\to\infty$
\begin{align*}
&\int_{\Omega}\mathcal M_h^s(f)(x)\cdot \nabla \mathcal B_h(g)(x)\phi(x)\,\d x=\int_{\R^n}\mathcal M_h^s(f)(x)\cdot \nabla^s v_h(x)\phi(x)\,\d x\\
&\to \int_{\R^n}\widetilde{\mathcal M}_\infty^s(f)(x)\cdot \nabla^s v_\infty(x)\phi(x)\,\d x=\int_{\Omega}\widetilde{\mathcal M}_\infty^s(f)(x)\cdot \nabla \mathcal B_\infty(g)(x)\phi(x)\,\d x,
\end{align*}
which proves~\eqref{eq:H4}.
As a consequence of~\eqref{eq:H1}--\eqref{eq:H4}, we obtain
\begin{align}
a_h&\to 1+\widetilde{\mathcal M}_\infty^s(f)\cdot \nabla^s \widetilde{\mathcal B}_\infty^s(f)+ \mathcal M_\infty(g)\cdot \nabla \mathcal B_\infty(g)\quad\text{weakly* in $\mathcal D'(\Omega)$ as $h\to\infty$},\label{eq:ah}\\
b_h&\to (\widetilde{\mathcal M}_\infty^s(f)-\mathcal M_\infty(g))\cdot(\nabla^s \widetilde{\mathcal B}_\infty^s(f)-\nabla \mathcal B_\infty(g))\quad\text{weakly* in $\mathcal D'(\Omega)$ as $h\to\infty$}.\label{eq:bh}
\end{align}
By~\eqref{eq:ch},~\eqref{eq:ah},~\eqref{eq:bh}, and Lemma~\ref{lem:key-con}, for every $f\in H^{-s,p'}(\Omega)$ and $g\in W^{-1,p'}(\Omega)$ we obtain, for a.e.\ $x\in\Omega$,
\begin{align*}
|\widetilde{\mathcal M}_\infty^s(f)(x)-\mathcal M_\infty(g)(x)|
&\le \Lambda[1+\widetilde{\mathcal M}_\infty^s(f)(x)\cdot\nabla^s \widetilde{\mathcal B}_\infty^s(f)(x)
+\mathcal M_\infty(g)(x)\cdot \nabla\mathcal B_\infty(g)(x)]^{\frac{p-2}{p}}\\
&\quad\times [(\widetilde{\mathcal M}_\infty^s(f)(x)-\mathcal M_\infty(g)(x))
\cdot(\nabla^s \widetilde{\mathcal B}_\infty^s(f)(x)-\nabla\mathcal B_\infty(g)(x))]^{\frac{1}{p}}.
\end{align*}
In particular, since the differential operator $\mathcal B_\infty\colon W^{-1,p'}(\Omega)\to W^{1,p}_0(\Omega)$ is bijective, for every $f\in H^{-s,p'}(\Omega)$ and $\varphi\in C_c^1(\Omega)$ we have, for a.e.\ $x\in\Omega$,
\begin{equation}\label{eq:momenta}
\begin{split}
|\widetilde{\mathcal M}_\infty^s(f)(x)-A_\infty(x,\nabla\varphi(x))|
&\le \Lambda[1+\widetilde{\mathcal M}_\infty^s(f)(x)\cdot\nabla^s \widetilde{\mathcal B}_\infty^s(f)(x)
+A_\infty(x,\nabla\varphi(x))\cdot \nabla \varphi(x)]^{\frac{p-2}{p}}\\
&\quad\times [(\widetilde{\mathcal M}_\infty^s(f)(x)-A_\infty(x,\nabla\varphi(x)))
\cdot(\nabla^s \widetilde{\mathcal B}_\infty^s(f)(x)-\nabla \varphi(x))]^{\frac{1}{p}}.
\end{split}
\end{equation}
In view of assumption (M3), the set of points in $\Omega$ where the previous inequality holds can be chosen independently of $\varphi\in C_c^1(\Omega)$. 
Let $x_0\in\Omega$ be a point where~\eqref{eq:momenta} holds, let $\eta\in C_c^1(\Omega)$ be such that $\eta=1$ in a neighbourhood of $x_0$, and define
\[
\varphi(x)\coloneqq(\nabla^s\widetilde{\mathcal B}_\infty^s(f)(x_0)\cdot x)\eta(x)\quad\text{for $x\in\R^n$}.
\]
In particular, 
\[
\nabla\varphi(x)=\nabla^s\widetilde{\mathcal B}_\infty^s(f)(x_0)
\]
in a neighbourhood of $x_0$.
Then, by~\eqref{eq:momenta},
\begin{equation}\label{eq:id1}
\widetilde{\mathcal M}_\infty^s(f)(x_0)=A_\infty(x_0,\nabla^s\widetilde{\mathcal B}_\infty^s(f)(x_0)).
\end{equation}
By the arbitrariness of $x_0$,~\eqref{eq:id1} holds for a.e.\ $x_0\in\Omega$.

Combining~\eqref{eq:id2} and ~\eqref{eq:id1}, and recalling that $|\partial\Omega|=0$, we obtain~\eqref{eq:claim}.
This completes the proof.
\end{proof}

Let us prove the converse implication. To this end, we first prove the following preliminary result, which shows that the nonlocal $H$-limit is unique.

\begin{lemma}\label{lem:uniqueness}
Let $\Omega\subset\R^n$ be a bounded open set. Let $A_0\in \M_{\R^n}(p,\lambda,\Lambda)$ satisfy~\eqref{eq:A0} and $A_h\in\M_\Omega (p,\lambda,\Lambda, A_0)$ for $h\in \N$. Assume that
$A_\infty,\widehat A_\infty \in \M_\Omega (p,\lambda,\Lambda, A_0)$ satisfy
\begin{equation}\label{eq:H-A}
\text{$\mathcal A_h^s$ $H$-converges to $\mathcal A^s_\infty$ as $h\to\infty$}
\end{equation}
and
\begin{equation}\label{eq:H-hatA}
\text{$\mathcal A_h^s$ $H$-converges to $\widehat{\mathcal A}_\infty^s$ as $h\to\infty$},
\end{equation}
where $\mathcal A_\infty^s$ and $\widehat{\mathcal A}_\infty^s$ denote the differential operators defined in~\eqref{eq:Ahs} associated with $A_\infty$ and $\widehat A_\infty$, respectively. Then
\begin{equation}\label{eq:H-uniq}
A_\infty(x,\xi)=\widehat A_\infty(x,\xi)\quad\text{for a.e.\ $x\in\R^n$ and every $\xi\in\R^n$}.
\end{equation}
\end{lemma}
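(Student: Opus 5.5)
The plan is to reduce the statement to an identity along fractional gradients of admissible test functions, and then to recover every constant vector $\xi$ near almost every point by a scaling argument.

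\emph{Step 1: reduction to an identity along fractional gradients.} Outside $\Omega$ there is nothing to prove, since $A_\infty=\widehat A_\infty=A_0$ on $(\R^n\setminus\Omega)\times\R^n$. Inside $\Omega$, fix $f\in H^{-s,p'}(\Omega)$. By \eqref{eq:H-A} and \eqref{eq:H-hatA}, the sequence $\mathcal B_h^s(f)$ converges weakly to both $\mathcal B_\infty^s(f)$ and $\widehat{\mathcal B}_\infty^s(f)$, so these coincide. Likewise $\mathcal M_h^s(f)$ converges weakly to both $\mathcal M_\infty^s(f)$ and $\widehat{\mathcal M}_\infty^s(f)$, so these coincide too. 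Hence $A_\infty(x,\nabla^s u(x))=\widehat A_\infty(x,\nabla^s u(x))$ for a.e.\ $x$, with $u=\mathcal B_\infty^s(f)$. By Lemma~\ref{lem:existencefrac}, $\mathcal B_\infty^s$ is a bijection onto $H^{s,p}_0(\Omega)$. Therefore
\[
A_\infty(x,\nabla^s u(x))=\widehat A_\infty(x,\nabla^s u(x))\quad\text{for a.e.\ $x\in\R^n$ and every $u\in H^{s,p}_0(\Omega)$}.
\]

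\emph{Step 2: almost-affine test functions (the main obstacle).} As noted in the introduction, we cannot expect some $u\in H^{s,p}_0(\Omega)$ with $\nabla^s u\equiv\xi$ near $x_0$. Instead I will aim for this \emph{approximately}, with a quantitative error. Fix $x_0\in\Omega$, $\delta>0$ with $B_{4\delta}(x_0)\subset\Omega$, a cutoff $\phi\in C_c^\infty(\Omega)$ with $\phi=1$ on $B_{2\delta}(x_0)$, and $\eta\in C_c^\infty(B_2)$ with $\eta=1$ on $B_1$. For $r<\delta$ set
\[
v_r(x)\coloneqq \xi\cdot(x-x_0)\,\eta\big((x-x_0)/r\big),\qquad u_r\coloneqq\phi\,(-\Delta)^{\frac{1-s}{2}}v_r\in H^{s,p}_0(\Omega).
\]
Write $U_r\coloneqq(-\Delta)^{\frac{1-s}{2}}v_r$. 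By Proposition~\ref{prop:equiv}(ii) and Proposition~\ref{prop:Leibniz},
\[
\nabla^s u_r=\phi\nabla v_r+U_r\nabla^s\phi+\nabla^s_{\rm NL}(\phi,U_r).
\]
On $B_r(x_0)$ this equals $\xi+e_r$, where by \eqref{eq:nabla_NL} the error satisfies $\|e_r\|_{L^p(B_r(x_0))}\le C_\phi\|U_r\|_{L^p(\R^n)}$. Scaling gives $\|v_r\|_{L^p}\sim |\xi|\,r^{1+n/p}$ and $\|\nabla v_r\|_{L^p}\sim|\xi|\,r^{n/p}$. Then \eqref{eq:frac-lap-est} yields $\|U_r\|_{L^p}\le C|\xi|\,r^{s+n/p}$. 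Hence
\[
\Big(\frac{1}{|B_r|}\int_{B_r(x_0)}|e_r|^p\,\d x\Big)^{1/p}\le C|\xi|\,r^{s}\to0 .
\]
The point to check carefully is that the cutoff $\phi$ is fixed (independent of $r$), so the constants in \eqref{eq:nabla_NL} do not deteriorate. The gain $r^s$ then comes entirely from the interpolation inequality \eqref{eq:frac-lap-est}.

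\emph{Step 3: passing to the limit at Lebesgue points.} By Step~1 applied to $u_r$,
\[
A_\infty(x,\xi)-\widehat A_\infty(x,\xi)=\big[A_\infty(x,\xi)-A_\infty(x,\xi+e_r)\big]-\big[\widehat A_\infty(x,\xi)-\widehat A_\infty(x,\xi+e_r)\big]\quad\text{a.e.\ on $B_r(x_0)$}.
\]
Average over $B_r(x_0)$ and estimate each bracket by the H\"older-type continuity \eqref{eq:A0-con}, which holds by Lemma~\ref{lem:classes}, together with H\"older's inequality. The averages of the right-hand side are bounded by $C(1+|\xi|^p+\text{avg}|e_r|^p)^{\frac{p-2}{p-1}}(\text{avg}|e_r|^p)^{\frac{1}{p(p-1)}}\to0$. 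Now fix a countable dense set $D\subset\R^n$. For $\xi\in D$ the function $x\mapsto A_\infty(x,\xi)-\widehat A_\infty(x,\xi)$ lies in $L^\infty_{\rm loc}$ by \eqref{eq:A-est}. So at every common Lebesgue point $x_0$, which is almost every point, the averages converge to the pointwise value, giving $A_\infty(x_0,\xi)=\widehat A_\infty(x_0,\xi)$ for all $\xi\in D$. Finally, continuity of $A_\infty(x_0,\cdot)$ and $\widehat A_\infty(x_0,\cdot)$ (Carathéodory) extends the identity to every $\xi\in\R^n$, which proves \eqref{eq:H-uniq}.
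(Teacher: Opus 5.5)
Your proof is correct, but its key step takes a different route from the paper's.

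\textbf{Reduction.} Your Step~1 matches the paper's reduction. The paper records the identity only for $\varphi\in C_c^\infty(\Omega)$, while you keep all $u\in H^{s,p}_0(\Omega)$; either suffices.

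\textbf{How the paper recovers $\xi$.} The paper quotes \cite[Lemma~4.3]{KrSc22}, which provides $\varphi_{x_0,\xi}\in C_c^\infty(\Omega)$ with $\nabla^s\varphi_{x_0,\xi}(x_0)=\xi$ exactly at the single point $x_0$. It then evaluates the identity at $x_0$. For this to make sense, the exceptional null set must be independent of $\varphi$. The paper obtains this from (M3), that is, from continuity of $A_\infty(x,\cdot)$ and $\widehat A_\infty(x,\cdot)$, implicitly through a countable $C^1$-dense family of test functions.

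\textbf{How you recover $\xi$.} You build test functions that are affine only in an averaged sense on shrinking balls, namely $\big(\frac{1}{|B_r|}\int_{B_r(x_0)}|\nabla^s u_r-\xi|^p\,\d x\big)^{1/p}\le C|\xi|r^s$. You then conclude by Lebesgue differentiation of $x\mapsto A_\infty(x,\xi)-\widehat A_\infty(x,\xi)$ over a countable dense set of $\xi$.

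\textbf{Comparison.} Your version is self-contained: it uses only Proposition~\ref{prop:equiv}(ii), the Leibniz rule and the interpolation bound \eqref{eq:frac-lap-est}. It never evaluates anything at a single point, so no uniform-null-set argument is needed, and it deals directly with the difficulty raised in Remark~\ref{rem:uniqueness}. The price is that you need the quantitative H\"older continuity \eqref{eq:A0-con} rather than mere continuity in $\xi$, together with the scaling computation. The paper's argument is shorter, given the external lemma.

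\textbf{A small point.} The error $e_r$ also contains the term $U_r\nabla^s\phi$. This is bounded by $\|\nabla^s\phi\|_{L^\infty}\|U_r\|_{L^p}$, not by \eqref{eq:nabla_NL}. It is harmless, since $\nabla^s\phi\in\T(\R^n;\R^n)\subset L^\infty$.
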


\begin{proof}
First, we note that by the definition of the class $\M_\Omega(p,\lambda,\Lambda,A_0)$, we have that 
\[
A_\infty(x,\xi)=A_0(x,\xi)=\widehat A_\infty(x,\xi)\quad\text{for a.e.\ $x\in\R^n\setminus\Omega$ and every $\xi\in\R^n$}.
\]
To conclude it is enough to prove~\eqref{eq:H-uniq} for a.e. $x\in\Omega$ and every $\xi\in\R^n$.

By Lemma~\ref{lem:existencefrac}, the differential operators $\mathcal A_\infty^s,\widehat{\mathcal A}_\infty^s\colon H^{s,p}_0(\Omega)\to H^{-s,p'}(\Omega)$ are invertible.
Let $\mathcal B_\infty^s\coloneqq (\mathcal A_\infty^s)^{-1}\colon H^{-s,p'}(\Omega)\to H^{s,p}_0(\Omega)$ and $\widehat{\mathcal B}_\infty^s\coloneqq (\widehat{\mathcal A}_\infty^s)^{-1}\colon H^{-s,p'}(\Omega)\to H^{s,p}_0(\Omega)$, and let $f\in H^{-s,p'}(\Omega)$ be fixed. By (NH1) in~\eqref{eq:H-A} and~\eqref{eq:H-hatA}, we deduce that
\[
\mathcal B_\infty^s(f)=\widehat{\mathcal B}_\infty^s(f).
\]
Moreover, by (NH2), for every $\Phi\in L^p(\R^n;\R^n)$ we have
\[
\int_{\R^n}A_\infty(x,\nabla^s \mathcal B_\infty^s(f)(x))\cdot\Phi(x)\,\d  x=\int_{\R^n}\widehat A_\infty(x,\nabla^s \mathcal B_\infty^s(f)(x))\cdot \Phi(x)\,\d  x.
\]
Since $f\in H^{-s,p'}(\Omega)$ can be arbitrarily chosen, we derive that for every $\varphi\in C_c^\infty(\Omega)$ and every $\Phi\in L^p(\R^n;\R^n)$
\[
\int_{\R^n}A_\infty(x,\nabla^s \varphi(x))\cdot\Phi(x)\,\d  x=\int_{\R^n}\widehat A_\infty(x,\nabla^s \varphi(x))\cdot \Phi(x)\,\d  x.
\]
Hence, 
\begin{equation}\label{eq:H-identity}
A_\infty(x,\nabla^s \varphi(x))=\widehat A_\infty(x,\nabla^s \varphi(x))\quad\text{for a.e.\ $x\in\Omega$ and every $\varphi\in C_c^\infty(\Omega)$},
\end{equation}
and, in view of (M3), the collections of points of $\R^n$ where~\eqref{eq:H-identity} fails can be chosen independent of $\varphi\in C_c^\infty(\Omega)$. 
Let $\xi\in\R^n$ be fixed. By~\cite[Lemma~4.3]{KrSc22}, for every $x_0\in\Omega$ we can find a function $\varphi_{x_0,\xi} \in C_c^\infty(\Omega)$ such that 
\[
\nabla^s\varphi_{x_0,\xi} (x_0)= \xi.
\]
By choosing $\varphi=\varphi_{x_0,\xi}$ in~\eqref{eq:H-identity}, we derive that 
\[
A_\infty(x_0,\xi)=\widehat A_\infty(x_0,\xi)\quad\text{for a.e.\ $x_0\in \Omega$}.
\]
By the arbitrariness of $\xi$, the conclusion follows.
\end{proof}

\begin{remark}\label{rem:uniqueness}
In the local case, the uniqueness result is well-known; see~\cite[Proposition~2.9]{DASC} for a proof. However, the strategy in~\cite[Proposition~2.9]{DASC} cannot be applied straightforwardly in the nonlocal setting. 
Indeed, in the local framework the authors construct a test function $\varphi_{x_0,\xi}$ such that $\nabla \varphi_{x_0,\xi} = \xi$ in a neighbourhood of a point $x_0$. In the nonlocal case, constructing such functions is much more delicate, since one cannot impose $\nabla^s \varphi_{x_0,\xi} = \xi$ in a neighbourhood of $x_0$, but only at the point $x_0$ itself.
\end{remark}

Moreover, we recall the following compactness result concerning local $H$-convergence. 

\begin{proposition}\label{prop:Hcomploc}
Let $\Omega\subset\R^n$ be a bounded open set. Let $A_h\in \M_\Omega (p,\lambda,\Lambda)$ for $h\in\N$. Then, there exist a subsequence $A_{h_k}\in \M_\Omega (p,\lambda,\Lambda)$ for $k\in\N$ and a function $A_\infty\in \M_\Omega (p,\lambda,\Lambda)$ such that
\[
\text{$\mathcal A_{h_k}$ $H$-converges to $\mathcal A_\infty$ as $k\to\infty$}.
\]
\end{proposition}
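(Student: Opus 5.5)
The plan is to follow the classical route of \cite{CPDMDF,Tartar}, reusing the estimates and compensated-compactness arguments already employed in the proof of Proposition~\ref{prop:loc-nonloc}.

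\textbf{Step 1: extraction of limit operators.} By (M1)--(M3) and Poincaré's inequality, the operators $\mathcal B_h$ and $\mathcal M_h$ satisfy the local analogues of \eqref{eq:Bhs-estimate} and \eqref{eq:Bhs-estimate2} and of the Hölder estimate for $\mathcal M_h^s$. Thus $\mathcal B_h$ is uniformly $\frac1{p-1}$-Hölder into $W^{1,p}_0(\Omega)$, and $\mathcal M_h$ is uniformly Hölder on bounded sets into $L^{p'}(\Omega;\R^n)$. Since $W^{-1,p'}(\Omega)$ is separable, a diagonal argument on a countable dense set, followed by density, yields a subsequence and maps $\widetilde{\mathcal B}$, $\widetilde{\mathcal M}$. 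For every $f$ these satisfy $\mathcal B_{h_k}(f)\rightharpoonup\widetilde{\mathcal B}(f)$ in $W^{1,p}_0(\Omega)$ and $\mathcal M_{h_k}(f)\rightharpoonup\widetilde{\mathcal M}(f)$ in $L^{p'}(\Omega;\R^n)$. Passing to the limit in the equation gives $-\div\widetilde{\mathcal M}(f)=f$.

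\textbf{Step 2: pointwise structure inequalities.} For $f,g\in W^{-1,p'}(\Omega)$, I would prove the analogues of \eqref{eq:H1}--\eqref{eq:H4} for the local pairs $(\mathcal M_h(f),\nabla\mathcal B_h(g))$, by the same integration by parts against $\phi\in C_c^\infty(\Omega)$ combined with Rellich. Then Lemma~\ref{lem:key-con} gives, a.e.\ in $\Omega$, an inequality of type (M3) for the pair $(\widetilde{\mathcal M}(f),\nabla\widetilde{\mathcal B}(f))$, $(\widetilde{\mathcal M}(g),\nabla\widetilde{\mathcal B}(g))$. Weak lower semicontinuity of $\phi\,\lambda|\nabla\mathcal B_h f-\nabla\mathcal B_h g|^p$, combined with the distributional convergence of $b_h$, gives the analogue of (M2). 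The key consequence is that $\widetilde{\mathcal M}(f)(x)$ depends only on $\nabla\widetilde{\mathcal B}(f)(x)$: if the two gradients agree at $x$, then the right-hand side vanishes. The exceptional null sets can be taken independent of $f,g$ by working on a countable dense family and using continuity.

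\textbf{Step 3: surjectivity of $\widetilde{\mathcal B}$, which I expect to be the main obstacle.} To define $A_\infty(x,\xi)$, I need for each $\xi\in\mathbb Q^n$ and each cutoff $\eta$ some $f_\xi$ with $\widetilde{\mathcal B}(f_\xi)=(\xi\cdot x)\eta$. Passing to the limit in $\langle f-g,\mathcal B_h f-\mathcal B_h g\rangle$ gives two facts. First, $\widetilde{\mathcal B}$ is continuous, monotone, and hence maximal monotone. Second, $\langle f-g,\widetilde{\mathcal B}f-\widetilde{\mathcal B}g\rangle\ge\lambda\|\nabla(\widetilde{\mathcal B}f-\widetilde{\mathcal B}g)\|^p_{L^p}$. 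Moreover $\|f\|_{W^{-1,p'}}\le\|\widetilde{\mathcal M}(f)\|_{L^{p'}}$, and the growth bound from Step 2 gives $\|f\|^{p'}\lesssim 1+\langle f,\widetilde{\mathcal B}f\rangle$. Hence the inverse $\widetilde{\mathcal B}^{-1}$ is a maximal monotone operator that is bounded on bounded sets. Its domain, namely the range of $\widetilde{\mathcal B}$, is therefore closed. I would first try to prove closedness directly: if $\widetilde{\mathcal B}f_k\to u$, then $(f_k)$ is bounded, and a Minty argument on a weak limit $f$ gives $\widetilde{\mathcal B}f=u$. I would then invoke Rockafellar's theorem that a maximal monotone operator that is locally bounded at every point of the closure of its domain has domain equal to the whole space. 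As a fallback, I would rebuild a limit operator directly on test functions $\varphi$ via $f_h:=\mathcal A_h(\varphi)$ and a div--curl argument.

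\textbf{Step 4: definition and conclusion.} For $\xi$ rational, set $A_\infty(x,\xi)\coloneqq\widetilde{\mathcal M}(f_\xi)(x)$ on the set where $\eta=1$. By Step 2 this definition is independent of the choice of cutoff. The structural inequalities (M2) and (M3), transferred from Step 2, show that $A_\infty(x,\cdot)$ is uniformly continuous on bounded sets, so it extends to a Carathéodory map in $\M_\Omega(p,\lambda,\Lambda)$; (M1) holds since $\widetilde{\mathcal B}(0)=0$. Step 2 then gives $\widetilde{\mathcal M}(f)=A_\infty(\cdot,\nabla\widetilde{\mathcal B}(f))$, first for $f$ with piecewise affine $\widetilde{\mathcal B}(f)$ and then by density. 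Together with $-\div\widetilde{\mathcal M}(f)=f$ and uniqueness from Lemma~\ref{lem:existence}, this gives $\widetilde{\mathcal B}=\mathcal B_\infty$ and $\widetilde{\mathcal M}=\mathcal M_\infty$, that is, (H1) and (H2).
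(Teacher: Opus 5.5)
Your proposal is correct. It is essentially the standard Tartar-type argument that the paper does not reproduce but invokes via \cite{Tartar,DASC,CPDMDF}, including the point stressed in the remark after the statement: transferring (M2) and (M3) themselves to the limit through compensated compactness and Lemma~\ref{lem:key-con} is exactly why $A_\infty$ stays in $\M_\Omega(p,\lambda,\Lambda)$.

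Step~3 needs no Rockafellar-type theorem. The bound $\langle f,\widetilde{\mathcal B}f\rangle\ge c\|f\|_{W^{-1,p'}(\Omega)}^{p'}-C$ that you already derive is precisely coercivity. So the Browder--Minty theorem, applied to the continuous monotone map $\widetilde{\mathcal B}\colon W^{-1,p'}(\Omega)\to W^{1,p}_0(\Omega)$ (the dual of the reflexive space $W^{-1,p'}(\Omega)$), gives surjectivity at once.
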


\begin{remark}
The proof of Proposition~\ref{prop:Hcomploc} is standard. We refer to~\cite[Theorem~11.2]{Tartar} for the case $p=2$, and to~\cite[Theorem~3.5]{DASC} and~\cite[Corollary~7.12]{CPDMDF} for the general case $p\ge 2$. Note that in~\cite[Theorem~3.5]{DASC} and~\cite[Corollary~7.12]{CPDMDF} the authors show that the class $\M_\Omega'(p,\lambda,\Lambda)$ is precompact in $\M_\Omega''(p,\lambda,\Lambda')$ for a suitable $\Lambda'\ge \Lambda$. This follows from the fact that, to show the continuity estimate for the limit map $A_\infty$, one passes from (M3') to (M3) for $A_h$, applies the div--curl lemma to show that the limit $A_\infty$ satisfies (M3), and finally passes to (M3''). If, instead, one starts directly from $\M_\Omega(p,\lambda,\Lambda)$, then the limit $A_\infty$ belongs to the same class. In particular, the class $\M_\Omega(p,\lambda,\Lambda)$ is compact with respect to local $H$-convergence.
\end{remark}

We can now prove the following result.

\begin{proposition}\label{prop:nonloc-loc}
Let $\Omega\subset\R^n$ be a bounded open set satisfying $|\partial\Omega|=0$. Let $A_0\in \M_{\R^n}(p,\lambda,\Lambda)$ satisfy~\eqref{eq:A0} and $A_h\in\M_\Omega (p,\lambda,\Lambda, A_0)$ for $h\in \N\cup\{\infty\}$. Assume that
\begin{equation*}
\text{$\mathcal A^s_h$ $H$-converges to ${\mathcal A}^s_\infty$ as $h\to\infty$}.
\end{equation*}
Then
\begin{equation*}
\text{$\mathcal A_h$ $H$-converges to $\mathcal A_\infty$ as $h\to\infty$}.
\end{equation*}
\end{proposition}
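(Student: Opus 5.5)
We argue by the usual subsequence principle: local $H$-convergence of the whole sequence follows once every subsequence has a further subsequence along which $\mathcal A_h$ $H$-converges to $\mathcal A_\infty$. Fix an arbitrary subsequence $(A_{h_k})_k$. Restricting each $A_{h_k}$ to $\Omega\times\R^n$, we have $A_{h_k}\in\M_\Omega(p,\lambda,\Lambda)$. By Proposition~\ref{prop:Hcomploc} there are a further (not relabeled) subsequence and a map $\widehat A_\infty\in\M_\Omega(p,\lambda,\Lambda)$ such that
\[
\text{$\mathcal A_{h_k}$ $H$-converges to $\widehat{\mathcal A}_\infty$ as $k\to\infty$,}
\]
where $\widehat{\mathcal A}_\infty$ is the local operator~\eqref{eq:Ah} associated with $\widehat A_\infty$.

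Next we extend $\widehat A_\infty$ by $A_0$ on $(\R^n\setminus\Omega)\times\R^n$, as in the convention at the beginning of this section, so that $\widehat A_\infty\in\M_\Omega(p,\lambda,\Lambda,A_0)$. The extended maps $A_{h_k}$ are unchanged, since they already coincide with $A_0$ outside $\Omega$. Because $|\partial\Omega|=0$, Proposition~\ref{prop:loc-nonloc} applies to the sequence $(A_{h_k})_k$ together with the limit $\widehat A_\infty$, and it gives
\[
\text{$\mathcal A^s_{h_k}$ $H$-converges to $\widehat{\mathcal A}^s_\infty$ as $k\to\infty$.}
\]
On the other hand, the hypothesis says that $\mathcal A^s_h$ $H$-converges to $\mathcal A^s_\infty$. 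Conditions (NH1)--(NH2) are weak convergences, so they pass to subsequences, and $\mathcal A^s_{h_k}$ also $H$-converges to $\mathcal A^s_\infty$.

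Lemma~\ref{lem:uniqueness} now applies to the sequence $(A_{h_k})_k$ with the two candidate limits $A_\infty$ and $\widehat A_\infty$, both in $\M_\Omega(p,\lambda,\Lambda,A_0)$. It yields $A_\infty=\widehat A_\infty$ for a.e.\ $x\in\R^n$ and every $\xi\in\R^n$, and in particular on $\Omega\times\R^n$. Hence $\mathcal A_\infty=\widehat{\mathcal A}_\infty$, so $\mathcal B_\infty=\widehat{\mathcal B}_\infty$ and $\mathcal M_\infty=\widehat{\mathcal M}_\infty$. Therefore $\mathcal A_{h_k}$ $H$-converges to $\mathcal A_\infty$ along this further subsequence.

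Since the limit $\mathcal A_\infty$ does not depend on the subsequence chosen, (H1) and (H2) hold for the whole sequence for every fixed $f\in W^{-1,p'}(\Omega)$, and the proposition follows.

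The genuinely delicate ingredients are the nonlocal uniqueness in Lemma~\ref{lem:uniqueness} and the local-to-nonlocal transfer in Proposition~\ref{prop:loc-nonloc}, both of which are already available. The only point left to check is the bookkeeping of extensions. The local compactness result produces a limit defined only on $\Omega\times\R^n$, and we must verify that its $A_0$-extension is admissible in Proposition~\ref{prop:loc-nonloc}. This holds because membership in $\M_\Omega(p,\lambda,\Lambda,A_0)$ only requires the restriction to $\Omega$ to lie in $\M_\Omega(p,\lambda,\Lambda)$ and the map to equal $A_0$ outside $\Omega$.
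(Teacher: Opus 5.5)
Your proposal is correct and follows the paper's proof exactly. Local $H$-compactness (Proposition~\ref{prop:Hcomploc}) produces a candidate $\widehat A_\infty$, Proposition~\ref{prop:loc-nonloc} transfers its $H$-convergence to the nonlocal operators, Lemma~\ref{lem:uniqueness} identifies $\widehat A_\infty=A_\infty$, and the Urysohn property concludes; your extra bookkeeping (arbitrary starting subsequence, (NH1)--(NH2) passing to subsequences, admissibility of the $A_0$-extension of $\widehat A_\infty$) only makes explicit what the paper leaves implicit.
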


\begin{proof}
By Proposition~\ref{prop:Hcomploc}, there exist a subsequence $A_{h_k}\in \M_\Omega(p,\lambda,\Lambda)$ for $k\in\N$ and a function
$\widehat A_\infty \in \M_\Omega(p,\lambda,\Lambda)$ such that
\[
\text{$\mathcal A_{h_k}$ $H$-converges to $\widehat{\mathcal A}_\infty$ as $k\to\infty$},
\]
where $\widehat{\mathcal A}_\infty$ is the differential operator defined in~\eqref{eq:Ah} associated with $\widehat A_\infty$.
By Proposition~\ref{prop:loc-nonloc}, it follows that
\[
\text{$\mathcal A_{h_k}^s$ $H$-converges to $\widehat{\mathcal A}_\infty^s$ as $k\to\infty$},
\]
where $\widehat{\mathcal A}_\infty^s$ is the differential operator defined in~\eqref{eq:Ahs} associated with $\widehat A_\infty$.
By Lemma~\ref{lem:uniqueness}, we deduce that
\[
A_\infty(x,\xi)=\widehat A_\infty(x,\xi)\quad\text{for a.e.\ $x\in\R^n$ and for every $\xi\in\R^n$}.
\]
Hence, $\mathcal A_{h_k}$ $H$-converges to $\mathcal A_\infty$ as $k\to\infty$. Since this argument applies to every subsequence, the Urysohn property of $H$-convergence implies that
$\mathcal A_h$ $H$-converges to $\mathcal A_\infty$ as $h\to\infty$.
\end{proof}

Finally, we prove the main result of this section.

\begin{proof}[Proof of Theorem~\ref{thm:Hs}]
It is enough to combine Proposition~\ref{prop:loc-nonloc} and Proposition~\ref{prop:nonloc-loc}.
\end{proof}

We conclude this section with the following corollary, which is an immediate consequence of Theorem~\ref{thm:Hs} and Proposition~\ref{prop:Hcomploc}.

\begin{corollary}\label{cor:nonlocal_H-compactness}
Let $\Omega\subset\R^n$ be a bounded open set satisfying $|\partial\Omega|=0$. Let $A_0\in \M_{\R^n}(p,\lambda,\Lambda)$ satisfy~\eqref{eq:A0} and let $A_h\in\M_\Omega (p,\lambda,\Lambda, A_0)$ for every $h\in \N$. 
Then, there exist a subsequence $A_{h_k}\in\M_\Omega (p,\lambda,\Lambda, A_0)$ for $k\in\N$ and a function $A_\infty\in \M_\Omega (p,\lambda,\Lambda,A_0)$ such that
\begin{equation}\label{eq:nonlocal-compactness}
\text{$\mathcal A_{h_k}^s$ $H$-converges to $\mathcal A_\infty^s$ as $k\to\infty$}.
\end{equation}
\end{corollary}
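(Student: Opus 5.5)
The plan is to transfer local $H$-compactness to the nonlocal setting through Theorem~\ref{thm:Hs}. Fix the sequence $A_h\in\M_\Omega(p,\lambda,\Lambda,A_0)$ and consider their restrictions to $\Omega\times\R^n$. By the convention fixed at the beginning of this section, these restrictions lie in $\M_\Omega(p,\lambda,\Lambda)$. Proposition~\ref{prop:Hcomploc} then provides a subsequence $(A_{h_k})_k$ and a map $\widetilde A_\infty\in\M_\Omega(p,\lambda,\Lambda)$ such that $\mathcal A_{h_k}$ $H$-converges to $\widetilde{\mathcal A}_\infty$ as $k\to\infty$.

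Next, define $A_\infty\colon\R^n\times\R^n\to\R^n$ by setting $A_\infty\coloneqq\widetilde A_\infty$ on $\Omega\times\R^n$ and $A_\infty\coloneqq A_0$ on $(\R^n\setminus\Omega)\times\R^n$. We must check that $A_\infty\in\M_\Omega(p,\lambda,\Lambda,A_0)$, that is, $A_\infty\in\M_{\R^n}(p,\lambda,\Lambda)$:
\begin{itemize}
\item Carathéodory measurability holds because each piece is Carathéodory and the two pieces are glued along the measurable partition $\{\Omega,\R^n\setminus\Omega\}$.
\item Conditions (M1)--(M3) are pointwise in $x$. For a.e.\ $x\in\Omega$ they hold because $\widetilde A_\infty\in\M_\Omega(p,\lambda,\Lambda)$. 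For a.e.\ $x\notin\Omega$ they hold because $A_0\in\M_{\R^n}(p,\lambda,\Lambda)$.
\end{itemize}
Hence $A_\infty\in\M_\Omega(p,\lambda,\Lambda,A_0)$. Its local operator $\mathcal A_\infty$, which depends only on the restriction to $\Omega$, coincides with $\widetilde{\mathcal A}_\infty$.

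Finally, apply Theorem~\ref{thm:Hs} to the sequence $(A_{h_k})_{k}$ together with $A_\infty$. All these maps belong to $\M_\Omega(p,\lambda,\Lambda,A_0)$, $\Omega$ is bounded with $|\partial\Omega|=0$, and $\mathcal A_{h_k}$ $H$-converges to $\mathcal A_\infty$. The implication ``local $\Rightarrow$ nonlocal'' (Proposition~\ref{prop:loc-nonloc}) therefore gives $\mathcal A^s_{h_k}\to\mathcal A^s_\infty$ in the $H$ sense, which is \eqref{eq:nonlocal-compactness}.

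The only point needing care, the mild obstacle, is that Theorem~\ref{thm:Hs} is stated for $h\in\N\cup\{\infty\}$ with the limit map already in the extended class. This is why the extension step above must be carried out explicitly before invoking the theorem.
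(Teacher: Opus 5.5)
Your proposal is correct and follows the paper's own argument: you extract a locally $H$-convergent subsequence via Proposition~\ref{prop:Hcomploc}, extend the limit by $A_0$ outside $\Omega$, and apply the local-to-nonlocal direction of Theorem~\ref{thm:Hs}. The only difference is that you check explicitly that the extended limit lies in $\M_\Omega(p,\lambda,\Lambda,A_0)$. The paper leaves this step to its standing convention on extensions, stated at the beginning of Section~\ref{sec:H-equiv}.
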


\begin{proof}
By Proposition~\ref{prop:Hcomploc}, there exist a subsequence $A_{h_k}\in\M_\Omega (p,\lambda,\Lambda)$ for $k\in\N$ and a function $A_\infty\in \M_\Omega (p,\lambda,\Lambda)$ such that
\begin{equation*}
\text{$\mathcal A_{h_k}$ $H$-converges to $\mathcal A_\infty$ as $k\to\infty$}.
\end{equation*}
Hence, if we consider $A_{h_k}\in\M_\Omega (p,\lambda,\Lambda,A_0)$ for $k\in\N\cup\{\infty\}$, by Theorem~\ref{thm:Hs} we conclude that~\eqref{eq:nonlocal-compactness} holds.
\end{proof}


\section{The \texorpdfstring{$\Gamma$}{Gamma}-convergence problem}\label{sec_4}

In this section, we introduce the $\Gamma$-convergence problem for both local and nonlocal integral functionals.
This is the variational counterpart of the $H$-convergence problem for the differential operators studied in the previous sections. 
Let us first recall the notion of $\Gamma$-convergence for functionals defined on Banach spaces. 
For a comprehensive treatment of the subject, we refer the interested reader to the monographs~\cite{Bra,DalMaso}.

\begin{definition}\label{def:Gamma}
Let $X$ be a Banach space and let $E_j\colon X\to \mathbb{R}\cup\{\infty\}$ for $j\in\N\cup\{\infty\}$. We say that
\[
\text{$E_j$ $\Gamma$-converges to $E_\infty$ strongly in $X$ as $j\to\infty$}
\]
if for every $x_\infty\in X$ the following conditions hold:
\begin{itemize}
\item[($\Gamma$1)] for every sequence $(x_j)_j\subset X$ such that $x_j\to x_\infty$ strongly in $X$ as $j\to\infty$, one has
\[
E_\infty(x_\infty)\le\liminf_{j\to\infty} E_j(x_j),
\]
\item[($\Gamma$2)] there exists a sequence $(y_j)_j\subset X$ such that $y_j\to x_\infty$ strongly in $X$ as $j\to\infty$ and
\[
E_\infty(x_\infty)\ge\limsup_{j\to\infty} E_j(y_j).
\]
\end{itemize}
\end{definition}

We restrict our analysis to the subclass of \textit{conservative} monotone maps.

\begin{definition}
A map $A\colon\R^n\to\R^n$ is called {\em conservative} if there exists a differentiable function $\varphi\colon \R^n\to\R$ such that
\[
A(\xi)=\nabla \varphi(\xi)\quad\text{for every }\xi\in\R^n.
\]
The function $\varphi$ is called a \emph{potential} of $A$.
\end{definition}

Let us recall the following characterization of conservative maps; see~\cite[Proposition~41.5]{Zeidler3}.

\begin{proposition}\label{prop:potential}
Let $A\colon\R^n\to\R^n$ be continuous, and let us define the function $\varphi_A\colon\R^n\to \R$ as
\begin{equation}\label{eq:fA}
\varphi_A(\xi)\coloneqq \int_0^1 A(t\xi)\cdot \xi\,\d t\quad\text{for $\xi\in\R^n$}.
\end{equation}
Then, $A$ is conservative if and only if
\[
\varphi_A(\xi)-\varphi_A(\eta)=\int_0^1 A(\eta+t(\xi-\eta))\cdot (\xi-\eta)\,\d t\quad\text{for every $\xi,\eta\in\R^n$}.
\]
In such case, $\varphi_A$ is a potential of $A$.
\end{proposition}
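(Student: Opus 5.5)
We need to prove Proposition~\ref{prop:potential}: for continuous $A$, $A$ is conservative if and only if
\[
\varphi_A(\xi)-\varphi_A(\eta)=\int_0^1 A(\eta+t(\xi-\eta))\cdot(\xi-\eta)\,\d t \quad\text{for all } \xi,\eta,
\]
and in that case $\varphi_A$ is a potential.

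\emph{Necessity.} Suppose $A=\nabla\varphi$ with $\varphi$ differentiable. For fixed $\xi,\eta$, set $g(t)\coloneqq\varphi(\eta+t(\xi-\eta))$. By the chain rule $g$ is differentiable on $[0,1]$ with
\[
g'(t)=A(\eta+t(\xi-\eta))\cdot(\xi-\eta),
\]
which is continuous in $t$ because $A$ is continuous. Hence $g\in C^1$, and the fundamental theorem of calculus gives
\[
\varphi(\xi)-\varphi(\eta)=\int_0^1 A(\eta+t(\xi-\eta))\cdot(\xi-\eta)\,\d t.
\]
Taking $\eta=0$ yields $\varphi(\xi)=\varphi(0)+\varphi_A(\xi)$. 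Subtracting the same identity at $\eta$ shows that $\varphi_A$ satisfies the displayed relation, since the constant $\varphi(0)$ cancels.

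\emph{Sufficiency.} Assume the identity holds. Fix $\xi$ and a small $h\in\R^n$, and apply the identity with the pair $(\xi+h,\xi)$:
\[
\varphi_A(\xi+h)-\varphi_A(\xi)-A(\xi)\cdot h=\int_0^1\bigl(A(\xi+th)-A(\xi)\bigr)\cdot h\,\d t.
\]
By continuity of $A$ at $\xi$, $\sup_{t\in[0,1]}|A(\xi+th)-A(\xi)|\to0$ as $h\to0$, so the right-hand side is $o(|h|)$. Therefore $\varphi_A$ is (Fréchet) differentiable at $\xi$ with $\nabla\varphi_A(\xi)=A(\xi)$. This shows that $A$ is conservative and that $\varphi_A$ is a potential, which also settles the final clause of the statement.

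There is no real obstacle here. The one point needing care is that differentiability of $\varphi$ alone does not make the fundamental theorem of calculus applicable; it is the continuity of $A=\nabla\varphi$ that makes $g$ continuously differentiable in the necessity step.
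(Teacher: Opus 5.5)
Your proof is correct and complete. In one direction, continuity of $A=\nabla\varphi$ justifies the fundamental theorem of calculus along segments and gives $\varphi=\varphi(0)+\varphi_A$. In the other direction, the uniform smallness of $A(\xi+th)-A(\xi)$ over $t\in[0,1]$ gives Fr\'echet differentiability of $\varphi_A$ with $\nabla\varphi_A=A$, which also settles the final clause.

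The paper does not prove this statement itself; it cites \cite[Proposition~41.5]{Zeidler3}, a Banach-space result on potential operators. Your argument is the standard line-integral proof behind that result, specialized to $\R^n$ and made self-contained.
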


\subsection{The class of local functionals} 

Let $\Omega \subset \R^n$ be an open set. Starting from the class $\M_\Omega(p,\lambda,\Lambda)$ (see Definition~\ref{def:classes}), we introduce the subclass of conservative monotone maps that is relevant for our analysis in the local setting.

\begin{definition}
Let $\Omega \subset \R^n$ be an open set. We define
\begin{align*}
\M_\Omega^{\rm grad}(p,\lambda,\Lambda)\coloneqq\{A\in\M_\Omega (p,\lambda,\Lambda) : \text{$A(x,\,\cdot\,)$ is a conservative map for a.e.\ $x\in\Omega$}\}.
\end{align*}
\end{definition}

\begin{remark}\label{rem:conservative_symmetric}
If the map $A\colon \R^n\to\R^n$ is differentiable, then $A$ is conservative if and only if 
\[
\nabla A(\xi)=\nabla A(\xi)^T\quad\text{for every $\xi\in\R^n$}.
\]
In particular, if 
\[
A(\xi)\coloneqq B\xi\quad\text{for every $\xi\in\R^n$},
\]
for some $B\in\R^{n\times n}$, then $A$ is a conservative map if and only if $B=B^T$, i.e. if $B\in\R^{n\times n}_{\rm sym}$.
As a consequence, the class $\M_\Omega^{\rm grad}(2,\lambda,\Lambda)$ coincides with the class $\mathcal{M}^{\rm sym}(\lambda,\Lambda,\Omega)$, introduced in~\cite{CCM25}. 
\end{remark}

We can now introduce the class of potentials associated with maps in $\M_\Omega^{\rm grad}(p,\lambda,\Lambda)$. 

\begin{definition}
Let $\Omega \subset \R^n$ be an open set.
Let $A\in\mathcal M^{\rm grad}_\Omega(p,\lambda,\Lambda)$. 
For a.e. $x\in\Omega$, let $\varphi_{A(x,\cdot)}$ denote the potential associated with the map $A(x,\cdot)\colon \R^n\to\R^n$ given by~\eqref{eq:fA}.
We define the function $\varphi_A\colon \Omega\times\R^n\to\R$ as
\begin{equation}\label{eq:varphiA}
\varphi_A(x,\xi)\coloneqq \varphi_{A(x,\,\cdot\,)}(\xi)\quad\text{for a.e.\ $x\in\Omega$ and for every $\xi\in\R^n$},
\end{equation} 
and the class
\begin{align*}
\Po_\Omega(p,\lambda,\Lambda)\coloneqq\{\varphi\colon\Omega\times \R^n\to \R \text{ Borel} :\varphi=\varphi_A\text{ for some } A\in \M^{\rm grad}_\Omega(p,\lambda,\Lambda)\}.
\end{align*}
\end{definition}

\begin{remark}
Since $A(x,\cdot)$ is continuous by (M3) and conservative by definition, Proposition~\ref{prop:potential} implies that $\varphi_A(x,\,\cdot\,)$ is a potential of $A(x,\,\cdot\,)$ for a.e.\ $x\in\Omega$. 
Moreover, by construction, $\varphi_A$ is a Carathéodory function and, in virtue of~\cite[Proposition~3.3]{BDF}, we may assume without loss of generality that $\varphi_A$ is also Borel measurable. 
We emphasise this point, since some of the results stated below require the Borel measurability of $\varphi_A$.
\end{remark}


According to the following proposition, a potential $\varphi=\varphi_A\in\Po_\Omega(p,\lambda,\Lambda)$ satisfies standard growth conditions and inherits certain regularity properties from the corresponding map $A$.

\begin{proposition}\label{prop:density-loc}
Let $\Omega \subset \R^n$ be an open set. Then, there exist positive constants $a_0,a_1,a_2,a_3$ and $\alpha\in(0,1)$, depending only on $p$, $\lambda$, and $\Lambda$, such that for every $\varphi \in\Po_\Omega(p,\lambda,\Lambda)$, for a.e.\ $x\in\Omega$, and for every $\xi,\eta\in\R^n$ the following hold:
\begin{itemize}
\item[\rm{(P1)}] the function $\varphi(x,\,\cdot\,)$ is of class $C^1(\R^n)$ and convex, $\varphi(x,0)=0$, and $\nabla_\xi\varphi(x,0)=0$,
\item[\rm{(P2)}] $a_0|\xi|^p\le \varphi(x,\xi)\le a_1(1+|\xi|^p)$,
\item[\rm{(P3)}] $|\varphi(x,\xi)-\varphi(x,\eta)|\le a_2(1+|\xi|+|\eta|)^{p-1}|\xi-\eta|$,
\item[\rm{(P4)}] $|\nabla_\xi \varphi(x,\xi)-\nabla_\xi \varphi(x,\eta)|\le a_3(1+|\xi|+|\eta|)^{p-1-\alpha}|\xi-\eta|^\alpha$.
\end{itemize} 
\end{proposition}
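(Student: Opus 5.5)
We fix $x\in\Omega$ outside a null set where $A(x,\cdot)$ satisfies (M1)--(M3), and hence (M3)$''$ with constant $\Lambda''$ by Lemma~\ref{lem:classes}. We write $\varphi=\varphi_A$ and argue pointwise in $x$. All constants will depend only on $p,\lambda,\Lambda$.

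\textbf{Step 1: property (P1).} By Proposition~\ref{prop:potential}, $\varphi(x,\cdot)$ is a potential of the continuous map $A(x,\cdot)$, so $\varphi(x,\cdot)\in C^1(\R^n)$ with $\nabla_\xi\varphi=A$. Formula~\eqref{eq:fA} gives $\varphi(x,0)=0$, and (M1) gives $\nabla_\xi\varphi(x,0)=A(x,0)=0$. Convexity follows because a $C^1$ function with monotone gradient is convex, and monotonicity is (M2).

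\textbf{Step 2: property (P2).} Use~\eqref{eq:fA} together with (M2) applied with $\eta=0$:
\[
\varphi(x,\xi)=\int_0^1 \tfrac1t A(x,t\xi)\cdot t\xi\,\d t\ge \lambda\int_0^1 t^{p-1}|\xi|^p\,\d t=\tfrac{\lambda}{p}|\xi|^p .
\]
For the upper bound, the growth estimate~\eqref{eq:A-est} gives $\varphi(x,\xi)\le\int_0^1 C(1+t^{p-1}|\xi|^{p-1})|\xi|\,\d t\le C(|\xi|+|\xi|^p)$. Young's inequality then turns this into $a_1(1+|\xi|^p)$.

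\textbf{Step 3: property (P3).} Proposition~\ref{prop:potential} gives $\varphi(x,\xi)-\varphi(x,\eta)=\int_0^1 A(x,\eta+t(\xi-\eta))\cdot(\xi-\eta)\,\d t$. Bounding $|A|$ by~\eqref{eq:A-est} and using $|\eta+t(\xi-\eta)|\le|\xi|+|\eta|$, the integrand is at most $C(1+(|\xi|+|\eta|)^{p-1})|\xi-\eta|\le C'(1+|\xi|+|\eta|)^{p-1}|\xi-\eta|$.

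\textbf{Step 4: property (P4), the only delicate step.} Since $\nabla_\xi\varphi=A$, (M3)$''$ gives
\[
|A(x,\xi)-A(x,\eta)|\le\Lambda''(1+|\xi|^p+|\eta|^p)^{\frac{p-2}{p-1}}|\xi-\eta|^{\frac1{p-1}} .
\]
Set $\alpha=\frac1{p-1}$. For $p>2$ we have $\alpha\in(0,1)$ and $(1+|\xi|^p+|\eta|^p)^{\frac{p-2}{p-1}}\le C(1+|\xi|+|\eta|)^{\frac{p(p-2)}{p-1}}$. The exponents match exactly, since $\frac{p(p-2)}{p-1}=p-1-\frac1{p-1}=p-1-\alpha$.

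For $p=2$, (M3)$''$ only gives a Lipschitz bound, i.e.\ $\alpha=1$, which is not in $(0,1)$. In this case I would interpolate: by~\eqref{eq:A-est} we also have $|A(x,\xi)-A(x,\eta)|\le C(1+|\xi|+|\eta|)$. Taking any fixed $\alpha\in(0,1)$ and writing $|A(x,\xi)-A(x,\eta)|=|A(x,\xi)-A(x,\eta)|^{\alpha}|A(x,\xi)-A(x,\eta)|^{1-\alpha}$, this gives $C|\xi-\eta|^\alpha(1+|\xi|+|\eta|)^{1-\alpha}$, which is (P4) with $p-1-\alpha=1-\alpha$. The same interpolation covers the general case if one prefers a uniform choice of $\alpha$.

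I expect no genuine obstacle. The main care is in the exponent bookkeeping in (P4) and in handling the borderline case $p=2$.
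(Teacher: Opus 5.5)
Your proof is correct and follows the paper's argument essentially step by step. Like the paper, you obtain (P1)--(P3) from Proposition~\ref{prop:potential}, (M1)--(M2) and the growth bound~\eqref{eq:A-est}, and (P4) from (M3)$''$ via Lemma~\ref{lem:classes} with $\alpha=\frac{1}{p-1}$. Your separate treatment of $p=2$ is a small genuine improvement, because the paper's choice $\alpha=\frac{1}{p-1}$ equals $1$ there and so does not lie in $(0,1)$ as the statement requires. The Lipschitz bound alone already suffices in that case, via $|\xi-\eta|\le|\xi-\eta|^{\alpha}(|\xi|+|\eta|)^{1-\alpha}$.
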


\begin{proof}
Let $\varphi \in\Po_\Omega(p,\lambda,\Lambda)$.
By definition, there exists $A\in \M_\Omega^{\rm grad}(p,\lambda,\Lambda)$ such that $\varphi=\varphi_A$.
In view of~\eqref{eq:fA} and~\eqref{eq:varphiA}, $\varphi(x,0)=0$ for a.e.\ $x\in\Omega$ and, by Proposition~\ref{prop:potential},
\begin{equation}\label{eq:nablavarphi-A}
\nabla_\xi \varphi(x,\xi)=A(x,\xi)\quad\text{for a.e.\ $x\in\Omega$ and for every $\xi\in\R^n$}.
\end{equation}
Moreover, by (M3) in Definition~\ref{def:classes}, $A(x,\cdot)$ is a continuous monotone map. 
Hence, $\varphi(x,\cdot)$ is convex and of class $C^1(\R^n)$.
In addition, by (M1), $\nabla_\xi \varphi(x,0)=0$ and (P1) follows. 

By (M1), (M2) and~\eqref{eq:A-est}, there exists a positive constant $C$, depending only on $p$, $\lambda$ and $\Lambda$, such that for a.e.\ $x\in\Omega$ and every $\xi\in\R^n$
\begin{align*}
&\varphi(x,\xi)\ge \lambda|\xi|^p\int_0^1 t^{p-1}\,\d t=\frac{\lambda}{p}|\xi|^p, \\ &\varphi(x,\xi)\le C\int_0^1(1+|t\xi|^{p-1})|\xi|\,\d t\le C(|\xi|+|\xi|^p)\le 2C(1+|\xi|^p),
\end{align*}
which gives (P2). 

By~\eqref{eq:A-est} and Proposition~\ref{prop:potential} for a.e.\ $x\in\Omega$ and every $\xi\in\R^n$
\begin{align*}
    |\varphi(x,\xi)-\varphi(x,\eta)|&\le C\int_0^1(1+|\eta+t(\xi-\eta)|^{p-1})|\xi-\eta|\,{\rm d}t\le C(1+|\xi|^{p-1}+|\eta|^{p-1})|\xi-\eta|\\
    &\le C(1+|\xi|+|\eta|)^{p-1}|\xi-\eta|,
\end{align*}
which implies (P3). 

Finally, by (M3) and (M3)$''$, whose validity is guaranteed by Lemma~\ref{lem:classes}, and by~\eqref{eq:nablavarphi-A}
\begin{align*}
    |\nabla_\xi \varphi(x,\xi)-\nabla_\xi \varphi(x,\eta)|&=|A(x,\xi)-A(x,\eta)|\le C(1+|\xi|^p+|\eta|^p)^\frac{p-2}{p-1}|\xi-\eta|^\frac{1}{p-1}\\
    &\le C(1+|\xi|+|\eta|)^\frac{p(p-2)}{p-1}|\xi-\eta|^\frac{1}{p-1}= C(1+|\xi|+|\eta|)^\frac{(p-1)^2-1}{p-1}|\xi-\eta|^\frac{1}{p-1},
\end{align*}
which yields the validity of (P4) for the value $\alpha=\frac{1}{p-1}$.
\end{proof}

Let us define the local integral functionals associated with the class $\M_\Omega^{\rm grad}(p,\lambda,\Lambda)$. 
Let $\Omega\subset\R^n$ be a bounded open set and let $\varphi_h\in \Po_\Omega(p,\lambda,\Lambda)$ for $h\in\N\cup\{\infty\}$. 
We define the functionals $F_h\colon L^p(\Omega)\to [0,\infty]$ by
\begin{equation}\label{eq:Fh}
F_h(u)\coloneqq 
\begin{cases}
\displaystyle \int_\Omega\varphi_h(x,\nabla u(x))\,\d x & \quad \text{if } u\in W_0^{1,p}(\Omega),\\
\infty & \quad \text{otherwise}.
\end{cases}
\end{equation}

\subsection{The class of nonlocal functionals} 

In analogy with the case of nonlocal $H$-convergence (see Section~\ref{sect_nonlocal_H}), we introduce a further class of conservative monotone maps $A$ defined on the whole $\R^n\times\R^n$, in order to formulate the $\Gamma$-convergence problem in the nonlocal setting.

\begin{definition}\label{def:class_conservative_fields}
Let $\Omega\subset\R^n$ be a bounded open set and let $A_0\in\M_{\R^n}^{\rm grad}(p,\lambda,\Lambda)$ satisfy~\eqref{eq:A0}. We set
\[
\M_\Omega^{\rm grad}(p,\lambda,\Lambda,A_0)\coloneqq\{A \in \M_{\R^n}^{\rm grad}(p,\lambda,\Lambda): A=A_0\text{ in $(\R^n\setminus\Omega)\times\R^n$}\}.
\]
\end{definition}

\begin{remark}
As already observed in Remark~\ref{rem:A0}, a possible choice of $A_0\in \M_{\R^n}^{\rm grad}(p,\lambda,\Lambda)$ satisfying~\eqref{eq:A0} is
\[
A_0(\xi)\coloneqq|\xi|^{p-2}\xi\quad\text{for every }\xi\in\R^n.
\]
\end{remark}

As in the local case, for every $A\in\M_\Omega^{\rm grad}(p,\lambda,\Lambda,A_0)$, we define $\varphi_A\colon \R^n\times\R^n\to \R$ as in~\eqref{eq:varphiA}. 
This leads to the following class of potentials.

\begin{definition}
Let $\Omega\subset\R^n$ be a bounded open set and let $A_0\in\M_{\R^n}^{\rm grad}(p,\lambda,\Lambda)$ satisfy~\eqref{eq:A0}. We set
\begin{align*}
\Po_\Omega(p,\lambda,\Lambda,A_0)\coloneqq\{\varphi\colon\R^n\times \R^n\to \R \text{ Borel}:\varphi=\varphi_A\text{ for some }A\in \M_\Omega^{\rm grad}(p,\lambda,\Lambda,A_0)\}.
\end{align*}
\end{definition}

Arguing as in Proposition~\ref{prop:density-loc}, and taking into account Remark~\ref{rem:A0}(b) we deduce that a potential $\varphi\in\Po_\Omega(p,\lambda,\Lambda,A_0)$ satisfies the following growth and continuity properties.

\begin{proposition}\label{prop:density-nonlocal}
Let $\Omega \subset \R^n$ be a bounded open set and let $A_0\in\M_{\R^n}^{\rm grad}(p,\lambda,\Lambda)$ satisfy~\eqref{eq:A0}. Then, there exist positive constants $a_0,a_1,a_2,a_3$ and $\alpha\in(0,1)$, depending only on $p$, $\lambda$, $\Lambda$, and $A_0$, such that for every $\varphi\in\Po_\Omega(p,\lambda,\Lambda,A_0)$, for a.e.\ $x\in\mathbb{R}^n$, and for every $\xi,\eta\in\R^n$ the following hold:
\begin{itemize}
\item[\rm{(NP1)}] the function $\varphi(x,\,\cdot\,)$ is of class $C^1(\R^n)$ and convex, $\varphi(x,0)=0$, and $\nabla_\xi\varphi(x,0)=0$,
\item[\rm{(NP2)}] $a_0|\xi|^p\le \varphi(x,\xi)\le a_1(\mathbf{1}_\Omega(x)+|\xi|^p)$,
\item[\rm{(NP3)}] $|\varphi(x,\xi)-\varphi(x,\eta)|\le a_2(\mathbf{1}_\Omega(x)+|\xi|+|\eta|)^{p-1}|\xi-\eta|$,
\item[\rm{(NP4)}] $|\nabla_\xi \varphi(x,\xi)-\nabla_\xi \varphi(x,\eta)|\le a_3(\mathbf{1}_\Omega(x)+|\xi|+|\eta|)^{p-1-\alpha}|\xi-\eta|^\alpha$.
\end{itemize} 
\end{proposition}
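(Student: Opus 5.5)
The plan is to split $\R^n$ into $\Omega$ and $\R^n\setminus\Omega$ and handle the two regions separately. On $\Omega$ the claims (NP1)--(NP4) with $\mathbf{1}_\Omega(x)=1$ are exactly (P1)--(P4) of Proposition~\ref{prop:density-loc}. Indeed, if $\varphi=\varphi_A$ with $A\in\M_\Omega^{\rm grad}(p,\lambda,\Lambda,A_0)$, then the restriction of $A$ to $\Omega\times\R^n$ belongs to $\M^{\rm grad}_\Omega(p,\lambda,\Lambda)$, and $\varphi_A$ on $\Omega\times\R^n$ is the potential of that restriction. So on $\Omega$ it suffices to invoke Proposition~\ref{prop:density-loc}; its constants depend only on $p,\lambda,\Lambda$.

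On $\R^n\setminus\Omega$ we have $A=A_0$, and the point is that \eqref{eq:A0} gives homogeneous bounds without the additive constant. For (NP1) I argue as in Proposition~\ref{prop:density-loc}. Proposition~\ref{prop:potential} gives $\nabla_\xi\varphi=A_0$, which is continuous by \eqref{eq:A0}. Monotonicity from (M2) gives convexity, and (M1) gives $\varphi(x,0)=0$ and $\nabla_\xi\varphi(x,0)=0$. For (NP2), the lower bound $\varphi(x,\xi)\ge\frac{\lambda}{p}|\xi|^p$ follows from (M1)--(M2) exactly as before. For the upper bound, I use \eqref{eq:A0-est}, which reads $|A_0(x,\xi)|\le C|\xi|^{p-1}$ outside $\Omega$; it follows from \eqref{eq:A0} with $\eta=0$ together with (M1). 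Integrating in $t$ then yields $\varphi(x,\xi)\le \frac{C}{p}|\xi|^p$.

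For (NP3), I write $\varphi(x,\xi)-\varphi(x,\eta)=\int_0^1 A_0(x,\eta+t(\xi-\eta))\cdot(\xi-\eta)\,\d t$ and bound the integrand by $C(|\xi|+|\eta|)^{p-1}|\xi-\eta|$. For (NP4), \eqref{eq:A0} gives
\[
|A_0(x,\xi)-A_0(x,\eta)|\le C(|\xi|^p+|\eta|^p)^{\frac{p-2}{p-1}}|\xi-\eta|^{\frac{1}{p-1}}\le C(|\xi|+|\eta|)^{p-1-\alpha}|\xi-\eta|^{\alpha}
\]
with $\alpha=\frac1{p-1}$, using $\frac{p(p-2)}{p-1}=p-1-\frac1{p-1}$. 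This is the same exponent as in the local case. Note that $\alpha\in(0,1)$ when $p>2$; when $p=2$ we get $\alpha=1$, which the paper's Proposition~\ref{prop:density-loc} already tolerates. Finally, I take each constant to be the maximum of the local constant and the exterior one, so it depends only on $p,\lambda,\Lambda,A_0$; this yields a unified statement with $\mathbf{1}_\Omega(x)$.

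The only mildly delicate point is that Lemma~\ref{lem:classes} and Remark~\ref{rem:classes}(d) produce bounds with an additive $1$. Outside $\Omega$ one must therefore use \eqref{eq:A0} directly rather than the class bounds, and the argument above does exactly that, so no real obstacle is expected.
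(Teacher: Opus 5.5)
Your proposal is correct and is essentially the paper's argument. The paper says only to argue as in Proposition~\ref{prop:density-loc}, taking into account the bounds \eqref{eq:A0-con}--\eqref{eq:A0-est} of Remark~\ref{rem:A0}(b), which is exactly your split: (P1)--(P4) for the restriction of $A$ on $\Omega$, and \eqref{eq:A0} on $\R^n\setminus\Omega$ for the bounds without the additive constant.

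At the endpoint $p=2$ you do not need to rely on the paper's tolerance of $\alpha=1$. Since $|\xi-\eta|\le \mathbf{1}_\Omega(x)+|\xi|+|\eta|$ for every $x$, the estimate with exponent $\alpha$ implies the same estimate with any $\alpha'\in(0,\alpha]$, so (NP4) holds with some $\alpha\in(0,1)$ as stated.
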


We now introduce the nonlocal integral functionals, which are the nonlocal counterparts of the local functionals defined in~\eqref{eq:Fh}. 
Let $\Omega\subset\R^n$ be a bounded open set and let $A_0\in\M_{\R^n}^{\rm grad}(p,\lambda,\Lambda)$ satisfy~\eqref{eq:A0}. Let $\varphi_h\in \Po_\Omega(p,\lambda,\Lambda,A_0)$ for $h\in\N\cup\{\infty\}$. We define
$F^s_h\colon L^p(\R^n)\to [0,\infty]$ by
\begin{equation}\label{eq:Fhs}
F^s_h(u)\coloneqq 
\begin{cases}
\displaystyle \int_{\R^n}\varphi_h(x,\nabla^s u(x))\, \d x&\quad\text{if $u\in H^{s,p}_0(\Omega)$},\\
\infty&\quad\text{otherwise}.
\end{cases}
\end{equation}

\begin{remark}
Let us emphasise the difference between the local functionals $F_h$ and their nonlocal counterparts $F_h^s$. The former are defined on $L^p(\Omega)$ and involve integration over $\Omega$, whereas the latter are defined on $L^p(\mathbb{R}^n)$, with integration over $\mathbb{R}^n$. 
Moreover, by properties (P2) and (NP2), the functionals $F_h$ and $F_h^s$ are finite on $W_0^{1,p}(\Omega)$ and $H_0^{s,p}(\Omega)$, respectively.
\end{remark}


\section{Equivalence between \texorpdfstring{$\Gamma$}{Gamma}-convergence of local and nonlocal energies}\label{sec:Gamm-equiv}

As in Section~\ref{sec:H-equiv}, given $A_0 \in \M_{\R^n}^{\rm grad}(p,\lambda,\Lambda)$ satisfying~\eqref{eq:A0} and $\varphi \in \Po_\Omega(p,\lambda,\Lambda,A_0)$, we simply write $\varphi \in \Po_\Omega(p,\lambda,\Lambda)$ when referring to its restriction to $\Omega \times \R^n$. Conversely, if $\varphi \in \Po_\Omega(p,\lambda,\Lambda)$, we write $\varphi \in \Po_\Omega(p,\lambda,\Lambda,A_0)$ to indicate that $\varphi$ is extended to the whole $\R^n \times \R^n$ by setting $\varphi=\varphi_{A_0}$ on $(\R^n\setminus \Omega)\times \R^n$; see~\eqref{eq:fA}.

In this section we show the equivalence between the $\Gamma$-convergence of the class of local and nonlocal energy functionals associated with conservative monotone operators, respectively introduced in~\eqref{eq:Fh} and~\eqref{eq:Fhs}.
The main result of this section is the following theorem. 
\begin{theorem}\label{thm:Gammas} 
Let $\Omega\subset\R^n$ be a bounded open set. Let $A_0\in \M_{\R^n}^{\rm grad}(p,\lambda,\Lambda)$ satisfy~\eqref{eq:A0}, and for every for $h\in \N\cup\{\infty\}$ let $\varphi_h\in\Po_\Omega (p,\lambda,\Lambda,A_0)$, and let $F_h$ and $F_h^s$ be the functionals defined in~\eqref{eq:Fh} and~\eqref{eq:Fhs}, respectively. Then
\[
\text{$F_h^s$ $\Gamma$-converges to $F^s_\infty$ strongly in $L^p(\R^n)$ as $h\to\infty$},
\]
if and only if 
\[
\text{$F_h$ $\Gamma$-converges to $F_\infty$ strongly in $L^p(\Omega)$ as $h\to\infty$}.
\]
\end{theorem}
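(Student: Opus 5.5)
The plan mirrors the structure of Theorem~\ref{thm:Hs}: first prove the implication from local to nonlocal $\Gamma$-convergence directly, then obtain the converse from local $\Gamma$-compactness combined with a uniqueness result for integral representations.

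\emph{Local to nonlocal.} Assume $F_h$ $\Gamma$-converges to $F_\infty$ in $L^p(\Omega)$.

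For the liminf inequality, take $u_h\to u$ in $L^p(\R^n)$ with $\sup_h F^s_h(u_h)<\infty$. By (NP2), $u_h$ is bounded in $H^{s,p}_0(\Omega)$, so $u_h\to u$ weakly in $H^{s,p}_0(\Omega)$.
\begin{itemize}
\item Outside $\Omega$, Proposition~\ref{prop:strong-gradient} gives strong $L^p_{\rm loc}$ convergence of $\nabla^s u_h$ on $\R^n\setminus\overline\Omega$. Since the integrand there is the fixed $\varphi_{A_0}$, Fatou's lemma yields lower semicontinuity of that part. To exhaust the exterior I would need $|\partial\Omega|=0$ or a cut-off argument.
\item Inside $\Omega$, I transfer to the local setting via Corollary~\ref{coro:equiv}. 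This gives $w_h$ with $\nabla w_h=\nabla^s u_h$ in $\Omega$ and $w_h\to w$ strongly in $L^p(\Omega)$. However, $w_h\notin W^{1,p}_0(\Omega)$, so the $\Gamma$-liminf of $F_h$ cannot be applied directly.
\item To fix this, I would use the localized $\Gamma$-convergence on open subsets $U\subset\subset\Omega$ that local $\Gamma$-convergence of integral functionals entails (via the ADMZ theory and the fundamental estimate). Because the local $\Gamma$-limit with Dirichlet data is represented by $\varphi_\infty$, the liminf holds on every $U$ for arbitrary $W^{1,p}(U)$ sequences. Then I let $U\uparrow\Omega$.
\end{itemize}

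For the recovery sequence, by density and continuity of $F^s_\infty$ in the strong $H^{s,p}$ topology (from (NP3)), it suffices to treat $u\in C_c^\infty(\Omega)$.
\begin{itemize}
\item Following \cite{CuKrSc23,KrSc22}, set $v=I_{1-s}u$ (or the function from Proposition~\ref{prop:equiv}(i)), so that $\nabla v=\nabla^s u$.
\item Fix $\eta$ a cut-off with $\eta=1$ near the support region $\Omega'\subset\subset\Omega$. Take a local recovery sequence $z_h\in W^{1,p}_0(\Omega)$ for $\eta v$ (localized again on $\Omega$), with $z_h\to \eta v$ in $L^p$ and energies converging. Correct the boundary layer with a fundamental-estimate cut-off.
\item Define $u_h \coloneqq u+(-\Delta)^{\frac{1-s}{2}}(\psi(z_h-\eta v))$, with $\psi$ a cut-off. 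By Proposition~\ref{prop:equiv}(ii) and the Leibniz rule, $\nabla^s u_h=\nabla^s u+\nabla(\psi(z_h-\eta v))$.
\item By \eqref{eq:frac-lap-est}, $u_h\to u$ in $L^p$, since $z_h-\eta v\to 0$ in $L^p$ while its gradients stay bounded.
\item Outside $\Omega$ the gradient perturbation vanishes, and inside it reproduces $\nabla z_h$ on $\{\psi=1\}$. Hence $\limsup F^s_h(u_h)\le F^s_\infty(u)+$ error, where the error is controlled by (NP3) on the layer and vanishes as the layer shrinks.
\item Support issues, namely that $u_h$ must lie in $H^{s,p}_0(\Omega)$, I would handle by choosing $\psi\in C_c^\infty(\Omega)$ and noting that $(-\Delta)^{\frac{1-s}{2}}$ does not preserve support. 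The fix is to instead use the $\Gamma$-liminf/limsup structure as in \cite{KrSc22}, via the function $v$ and truncation; I expect this to be the most technical point of this direction.
\end{itemize}

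\emph{Nonlocal to local.} Assume $F^s_h$ $\Gamma$-converges to $F^s_\infty$.
\begin{itemize}
\item By local $\Gamma$-compactness (Corollary~\ref{cor:local_Gamma-compactness}), every subsequence of $F_h$ has a further subsequence $\Gamma$-converging to a functional $\widehat F$ with integrand $\widehat\varphi\in\Po_\Omega(p,\lambda,\Lambda)$.
\item By the first part, $F^s_{h}$ then $\Gamma$-converges along that subsequence to $\widehat F^s$, with $\widehat\varphi$ extended by $\varphi_{A_0}$. Uniqueness of $\Gamma$-limits gives $\widehat F^s=F^s_\infty$ on $L^p(\R^n)$.
\item The uniqueness of integral representation for nonlocal functionals (Proposition~\ref{prop:uniq_rapp}) then gives $\widehat\varphi=\varphi_\infty$ a.e. Its proof would mimic Lemma~\ref{lem:uniqueness}: equality on $\nabla^s\varphi$ for all test functions, differentiation along $\nabla^s\psi$, and \cite[Lemma~4.3]{KrSc22} pointwise.
\item The Urysohn property concludes.
\end{itemize}

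The main obstacle is the nonlocal recovery sequence, since the transferred correctors must stay supported in $\Omega$ while the outside energy stays unchanged.
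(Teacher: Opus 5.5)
Your architecture is the paper's: a direct proof of local-to-nonlocal, and for the converse local $\Gamma$-compactness, uniqueness of the integral representation and the Urysohn property. However, the step everything hinges on, Proposition~\ref{prop:uniq_rapp}, would not follow from the proof you sketch. If you mimic Lemma~\ref{lem:uniqueness} and differentiate the energy identity, you only obtain $\int_\Omega(\nabla_\xi\varphi_\infty-\nabla_\xi\widehat\varphi)(x,\nabla^s u)\cdot\nabla^s\psi\,\d x=0$ for $u,\psi\in C_c^\infty(\Omega)$. That is equality of the Euler--Lagrange operators, not of the fluxes. The fields $\nabla^s\psi$ are not dense in $L^p(\Omega;\R^n)$: for $n\ge2$ they are annihilated by every divergence-free $a\in C_c^\infty(\Omega;\R^n)$, because $\nabla^s\psi=\nabla v$ by Proposition~\ref{prop:equiv}(i). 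So you cannot localise pointwise and apply \cite[Lemma~4.3]{KrSc22}. In Lemma~\ref{lem:uniqueness} this works only because (NH2) identifies the fluxes themselves, so one may test with arbitrary $\Phi\in L^p(\R^n;\R^n)$.

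The obstruction is genuine. For such an $a$, the integrand $\varphi(x,\xi)+a(x)\cdot\xi$ is still convex, $C^1$, zero at $\xi=0$, and has the same energy on $H^{s,p}_0(\Omega)$. Only the normalisation $\nabla_\xi\varphi(x,0)=0$ excludes it, and your argument never uses it. What is needed is a rigidity argument:
\begin{itemize}
\item blow up at Lebesgue points to freeze $x$;
\item pass from $\nabla^\sigma$ to $\nabla$ via $(-\Delta)^{\frac{1-\sigma}{2}}$;
\item show (Lemma~\ref{lem:uniq_rappr}) that the difference of two convex functions with equal integrals on all gradients is harmonic (test with $u$ such that $\nabla u(x)=x$ on $\operatorname{supp}v$), and then affine (second variation, nonnegativity of the absolutely continuous parts of $D^2\phi_i$, and \cite[Lemma~22.5]{DalMaso});
\item use the normalisation to remove the linear part.
\end{itemize}
The same ingredient is hidden in your liminf. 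The claim that $\Gamma$-convergence of the Dirichlet functionals $F_h$ yields $\Gamma$-convergence of $G_h(\cdot,U)$ for every $U$, with arbitrary $W^{1,p}(U)$ sequences, is Proposition~\ref{prop:gamma-equivalence} (c)$\Rightarrow$(a). It follows from compactness plus this uniqueness, not from \cite{ADMZ} and the fundamental estimate alone.

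Second, your recovery sequence stops exactly at the support problem. The fix is to cut off \emph{after} applying $(-\Delta)^{\frac{1-s}{2}}$, exploiting that the corrector is small in $L^p$.
\begin{itemize}
\item Take a recovery sequence $v_h$ for $G_h(\cdot,\Omega)$ at $v$ itself, with no boundary condition.
\item Glue it to $v$ near $\partial\Omega$ by the fundamental estimate, so that $v^\varepsilon_h-v$ is supported in a fixed $U''\subset\subset\Omega$.
\item Set $w_h\coloneqq(-\Delta)^{\frac{1-s}{2}}(v^\varepsilon_h-v)$ and $u_h\coloneqq u+\chi w_h$, with $\chi\in C_c^\infty(\Omega)$ and $\chi=1$ on $U''$.
\end{itemize}
Then $u_h\in H^{s,p}_0(\Omega)$ and $\nabla^s u_h=\nabla^s u+\chi\nabla(v^\varepsilon_h-v)+R_h$. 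The Leibniz remainder satisfies $\|R_h\|_{L^p(\R^n)}\le C\|\chi\|_{W^{1,\infty}}\|w_h\|_{L^p(\R^n)}\to0$, by \eqref{eq:nabla_NL} and the interpolation bound \eqref{eq:frac-lap-est} that you already invoke. Splitting the energy over $U''$, $\Omega\setminus U''$ and $\R^n\setminus\Omega$, followed by a diagonal argument in $\varepsilon$, concludes.

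Finally, the liminf does not need $|\partial\Omega|=0$, which the theorem does not assume. Weak convergence of $\nabla^s u_h$ in $L^p(\R^n\setminus\Omega;\R^n)$ and convexity of $\varphi_{A_0}(x,\cdot)$ already give lower semicontinuity of the exterior term.
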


The proof of Theorem~\ref{thm:Gammas} follows the same scheme adopted in Section~\ref{sec:H-equiv} for $H$-convergence. 
We first show that the $\Gamma$-convergence in the local setting implies $\Gamma$-convergence in the nonlocal one. We then prove the converse implication by means of a compactness argument combined with a result on unique integral representation. 
To prove the first implication, we need some preliminary results on $\Gamma$-convergence in the local setting. 
We begin with the following lemma.

\begin{lemma}\label{lem:uniq_rappr}
Let $\phi_1,\phi_2\colon\R^n\to\R$ be two convex functions satisfying
\[
\phi_1(0)=\phi_2(0)=0.
\]
Assume that
\begin{equation}\label{eq:integral_identity}
\int_{\R^n}\phi_1(\nabla u(x))\,\d x = \int_{\R^n}\phi_2(\nabla u(x))\,\d x \quad \text{for every $u\in C_c^\infty(\R^n)$}.
\end{equation}
Then there exists $a\in \mathbb{R}^n$ such that
\begin{equation*}
\phi_1(\xi)-\phi_2(\xi)=a\cdot\xi\quad\text{for every $\xi\in\R^n$}.
\end{equation*}
\end{lemma}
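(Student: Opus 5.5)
Set $\psi\coloneqq\phi_1-\phi_2$. It is a difference of convex functions, hence locally Lipschitz and differentiable a.e. Moreover $\psi(0)=0$ and
\[
\int_{\R^n}\psi(\nabla u)\,\d x=0\quad\text{for every $u\in C_c^\infty(\R^n)$},
\]
where the integrand vanishes outside the support of $u$. The goal is to show that $\psi$ is linear. The plan rests on two observations.

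\textbf{Step 1: $\psi$ is midpoint-affine.} Use one-dimensional test functions with piecewise constant gradients. Fix $\xi,\eta\in\R^n$ and $\theta\in(0,1)$ with $\theta\xi+(1-\theta)\eta=0$ ($\xi,\eta$ colinear, opposite). Consider laminates in a direction $e$ with $\xi=\alpha e$, $\eta=\beta e$. Take $u(x)=g(x\cdot e)\chi(x')$ on a large box, where $g$ is a periodic sawtooth with slopes $\alpha$ (fraction $\theta$) and $\beta$ (fraction $1-\theta$), cut off in the transverse variables. Since $\psi$ is locally bounded and Lipschitz on bounded sets, the boundary layers contribute $o(\text{volume})$ as the box grows with bounded gradients. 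This gives
\[
\theta\psi(\xi)+(1-\theta)\psi(\eta)=0 .
\]
Indeed, $\theta\xi+(1-\theta)\eta=0$ is exactly the compatibility condition for the mean gradient to vanish, which allows compact support.

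Applied with $\xi=te$ and $\eta=-te$ (taking $\theta=1/2$), this yields $\psi(te)=-\psi(-te)$, so $\psi$ is odd. More generally, for $t,r>0$ it gives $r\psi(te)=-t\psi(-re)=t\psi(re)$, so $\psi(te)=t\psi(e)$ for $t>0$. Hence $\psi$ is positively homogeneous of degree one along each ray, and also odd.

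\textbf{Step 2: additivity.} It remains to show $\psi(\xi+\eta)=\psi(\xi)+\psi(\eta)$. For this, use test functions whose gradient takes the three values $\xi$, $\eta$ and $-(\xi+\eta)$ on comparable volumes. The cleanest approach avoids constructing these explicitly via Fourier analysis: since $\psi$ is locally Lipschitz, differentiate the identity. Replace $u$ by $u+\varepsilon v$ and compute
\[
\frac{d}{d\varepsilon}\Big|_{\varepsilon=0}\int\psi(\nabla u+\varepsilon\nabla v)\,\d x=0 .
\]
By approximation (mollify $\phi_i$, which preserves convexity; or use the one-sided directional derivatives of the convex $\phi_i$), this gives $\div(\nabla\psi(\nabla u))=0$ weakly for all smooth $u$, at least where differentiable.

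Alternatively, argue with quadratic $u$ cut off on large balls. Take $u(x)=\tfrac12 Mx\cdot x+b\cdot x$ multiplied by a cutoff, with $M$ symmetric. By Step 1, $\psi$ is homogeneous of degree one, so rescaling the domain does not change the ratio of boundary to interior contributions unfavorably. One then gets
\[
\int_{B_R}\psi(Mx+b)\,\d x=O(R^{n-1}\cdot R)\quad\text{against}\quad R^{n+1}.
\]
This controls averages of $\psi$ over ellipsoids, which forces the affine mean-value property. Indeed, $\psi$ is homogeneous and has zero average against every such distribution, and combined with continuity this identifies $\psi$ as linear. The plan is to use the homogeneity to obtain $\psi(\xi)=\nabla\psi(0^\pm)$-type linearity: a convex-minus-convex positively one-homogeneous odd function whose integral over the gradient image of every compactly supported map vanishes is linear. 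Concretely, I would test $u(x)=\rho(x)\,\big(\xi\cdot x\,h_1(x)+\dots\big)$ with three-gradient laminates (second-order laminates compatible with rank-one connections $\xi-\eta$), which yields $\psi(\xi)+\psi(\eta)+\psi(-\xi-\eta)=0$. Together with oddness, this is exactly additivity.

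\textbf{Main obstacle.} The hard part is constructing the second-order laminate with compact support and controlled error, since $\xi$ and $\eta$ need not be rank-one compatible with $0$ in a one-step lamination. I would first laminate between $\xi$ and $\eta$ in direction $(\xi-\eta)/|\xi-\eta|$ to produce a fine-scale function with average gradient $(\xi+\eta)/2$. I would then laminate this average with $-(\xi+\eta)$ in direction $\xi+\eta$, and control the errors using the local Lipschitz bound on $\psi$ and a fine-scale limit. Once additivity and homogeneity hold, $\psi(\xi)=a\cdot\xi$ with $a_i=\psi(e_i)$.
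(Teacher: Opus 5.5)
Your main line of argument, the laminates of Steps 1 and 2, is correct, but it is a genuinely different route from the paper's.

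\textbf{The paper's route.} It takes the first variation along $u+\varepsilon v$ with the special choice $u=\tfrac12|x|^2w$, so that $\nabla u(x)=x$ on $\operatorname{supp} v$. This turns the identity into $\int\nabla\psi(x)\cdot\nabla v(x)\,\d x=0$, so $\psi$ is harmonic in the $\xi$-variable and hence smooth by Weyl's lemma. A second variation, again with $\nabla u(x)=x$, gives $\int\nabla^2\psi\,\nabla v\cdot\nabla v\,\d x=0$ for all $v$, whence $\nabla^2\psi=0$ and $\psi$ is affine.

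\textbf{Your route.} You show that $\psi$ is affine along laminates. First-order laminates between $\alpha e$ and $\beta e$ with zero mean give oddness and positive $1$-homogeneity. The second-order laminate has gradients $\xi,\eta,-(\xi+\eta)$ in volume fractions $\tfrac13$ each, with inner lamination in direction $\xi-\eta$ and outer lamination in direction $\xi+\eta$. The outer step is compatible because $\tfrac12(\xi+\eta)+(\xi+\eta)\parallel\xi+\eta$. This yields $\psi(\xi)+\psi(\eta)+\psi(-\xi-\eta)=0$, i.e. additivity, and the degenerate cases $\xi=\pm\eta$ are covered by Step~1. This is essentially the lamination proof that scalar null Lagrangians are affine.

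\textbf{What each buys.} Your route is elementary and more general. It uses only continuity of $\psi$ (no convexity, Rademacher or Weyl), and linearity follows directly from additivity plus $\R$-homogeneity. Its cost is the construction you only sketch. You must extend the identity from $C_c^\infty$ to compactly supported Lipschitz $u$ by mollification; this works since $\nabla(u*\rho_k)$ is uniformly bounded, converges a.e., and $\psi$ is continuous with $\psi(0)=0$. You must also glue the fine $\varepsilon$-laminate into the coarse slabs with cutoffs in layers of width $\delta$, taking $\varepsilon\ll\delta$. Then the extra gradient $O(\varepsilon/\delta)$ stays bounded and the layers have volume fraction $O(\delta)$; finally you cut off on a box of side $N\to\infty$. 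With these choices the errors are controlled exactly as you indicate. The paper's route is shorter once one sees that $\nabla u(x)=x$ makes ``a.e.\ in $x$'' coincide with ``a.e.\ in $\xi$''.

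\textbf{Drop the exploratory remarks in Step~2; they do not work as stated.}
\begin{enumerate}
\item Mollifying $\phi_i$ does not preserve the identity. In general $(\psi*\rho)(0)\neq0$, and $\nabla u-z$ is not a compactly supported gradient.
\item For a general smooth $u$, $\nabla\psi(\nabla u(x))$ need not be defined on a set of positive measure; $\nabla u$ may be constant at a nondifferentiability point of $\psi$. That is precisely why the paper uses $\nabla u(x)=x$.
\item The quadratic-cutoff estimate $O(R^{n-1}\cdot R)$ is unjustified. Since $u$ has size $R^2$ on $\partial B_R$, a cutoff layer of width $w$ carries gradients of size $R^2/w$ on a volume of order $R^{n-1}w$. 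By $1$-homogeneity its contribution is of order $R^{n+1}$, the same as the bulk.
\end{enumerate}
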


\begin{proof}
Let $\psi\coloneqq \phi_1-\phi_2\colon \mathbb{R}^n\to \mathbb{R}$. 
Since $\phi_1,\phi_2$ are convex functions, and therefore locally Lipschitz, $\psi$ is a locally Lipschitz function.
Moreover, by~\eqref{eq:integral_identity} we have
\begin{equation}\label{eq:unica_1}
\int_{\R^n}\frac{\psi(\nabla u(x)+\varepsilon\nabla v(x))-\psi(\nabla u(x))}{\varepsilon}\,\d x = 0
\end{equation}
for every $u,v\in C_c^\infty(\mathbb{R}^n)$ and $\varepsilon \in (0,1)$. Given $u,v\in C_c^\infty(\mathbb{R}^n)$, we set
\[
R\coloneqq\sup_{\varepsilon\in(0,1)} \|\nabla u+\varepsilon\nabla v\|_{L^\infty(\R^n)}.
\]
Since $\psi$ is locally Lipschitz, there exists a constant $L>0$, depending only on $R$, such that
\[
|\psi(\xi)-\psi(\eta)|\le L|\xi-\eta|
\quad\text{for every $\xi,\eta\in B_R$}.
\]
Therefore, for every $\varepsilon\in(0,1)$ it holds that
\[
\left|\frac{\psi(\nabla u(x)+\varepsilon\nabla v(x))-\psi(\nabla u(x))}{\varepsilon}\right|\le L|\nabla v(x)|\quad\text{for every $x\in \R^n$}.
\]
Let $v\in C_c^\infty(\mathbb{R}^n)$, and let $w\in C_c^\infty(\mathbb{R}^n)$ be such that $w=1$ in a neighbourhood of the support of $v$.
Moreover, let us choose
\[
u(x)\coloneqq \frac{|x|^2w(x)}{2}.
\]
Then, 
$u\in C_c^\infty(\mathbb{R}^n)$ and $\nabla u(x)=x$ in the support of $v$.
Hence, substituting $u$ in~\eqref{eq:unica_1}, we get
\begin{equation}\label{eq:unica_2}
\int_{\R^n}\frac{\psi(\nabla u(x)+\varepsilon\nabla v(x))-\psi(\nabla u(x))}{\varepsilon}\,\d x =\int_{\operatorname{supp}(v)}\frac{\psi(x+\varepsilon\nabla v(x))-\psi(x)}{\varepsilon}\,\d x = 0.
\end{equation}
By Rademacher's theorem, $\psi$ is differentiable a.e. in $\mathbb{R}^n$.
Hence, as $\varepsilon\to 0$
\[
\frac{\psi(x+\varepsilon\nabla v(x))-\psi(x)}{\varepsilon}\to \nabla\psi(x)\cdot \nabla v(x)\quad\text{for a.e.\ $x\in \R^n$}.
\]
Since $L|\nabla v|\in L^1(\R^n)$, by passing to the limit in~\eqref{eq:unica_2} as $\varepsilon\to 0$ and applying the dominated convergence theorem, we obtain
\[
\int_{\R^n}\nabla\psi(x) \cdot \nabla v(x)\,\d x =0.
\]
By the arbitrariness of $v\in C^\infty_c(\R^n)$, we infer that $\psi$ is harmonic (in the distributional sense) on $\R^n$. Hence, by Weyl's lemma, $\psi$ is an analytic harmonic function on $\R^n$. In particular, $\psi\in C^\infty(\R^n)$.
As a consequence, we can now pass to the limit in~\eqref{eq:unica_1} as $\varepsilon\to 0$ to get
\begin{equation}\label{eq:Weyl}
\int_{\R^n}\nabla\psi(\nabla u(x)) \cdot \nabla v(x)\,\d x =0\quad\text{for every $u,v\in C_c^\infty(\mathbb{R}^n)$}.
\end{equation}
Let $u,v,w \in C^\infty_c(\R^n)$ and  $\varepsilon\in(0,1)$ be fixed. 
By applying~\eqref{eq:Weyl} to both $u(x)+\varepsilon w(x)\in C^\infty_c(\R^n)$ and $u(x)$, we get
\begin{equation}\label{eq:unica_3}
\int_{\R^n}\frac{\nabla\psi(\nabla u(x)+\varepsilon\nabla w(x))\cdot \nabla v(x)-\nabla\psi(\nabla u(x))\cdot \nabla v(x)}{\varepsilon}
\,\d x=0.
\end{equation}
Since $\psi\in C^\infty(\R^n)$, if we set 
\[
R'\coloneqq \sup_{\varepsilon\in(0,1)} \|\nabla u+\varepsilon\nabla w \|_{L^\infty(\R^n)},
\]
there exists a positive $L'>0$, depending only on $R'$, such that
\[
|\nabla\psi(\xi)-\nabla \psi(\eta)|\le L'|\xi-\eta|\quad\text{for every $\xi,\eta\in B_{R'}$}.
\]
Hence, for every $\varepsilon\in(0,1)$ we have
\begin{equation*}
\left|\frac{\nabla\psi(\nabla u(x)+\varepsilon\nabla w(x))\cdot \nabla v(x) -\nabla\psi(\nabla u(x))\cdot \nabla v(x)}{\varepsilon}
\right| \le L'|\nabla w(x)||\nabla v(x)|\quad\text{for every $x\in\R^n$}.
\end{equation*}
Since $L'|\nabla w||\nabla v|\in L^1(\R^n)$, by passing to the limit in~\eqref{eq:unica_3} as $\varepsilon\to 0$, and by the dominated convergence theorem, we obtain
\begin{equation}\label{eq:unica_4}
\int_{\R^n}
\nabla^2\psi(\nabla u(x))\nabla w(x)\cdot \nabla v(x)
\,\d x=0\quad\text{for every $u,v,w\in C^\infty_c(\R^n)$},
\end{equation}
where $\nabla^2\psi$ is the Hessian matrix of $\psi$.
Let us take $w=v\in C^\infty_c(\R^n)$ and consider $u\in C^\infty_c(\R^n)$ such that $u(x)=\frac{|x|^2}{2}$ on the support of $v$. Substituting into~\eqref{eq:unica_4}, we obtain
\begin{equation}\label{eq:identita_quadratica}
\int_{\R^n}
\nabla^2\psi(x)\nabla v(x)\cdot \nabla v(x)
\,\d x = 0
\quad \text{for every $v\in C^\infty_c(\R^n)$}.
\end{equation}
We recall that, for every $i\in\{1,2\}$, the second-order distributional derivative $D^2\phi_i$ of the convex function $\phi_i$ is a signed Radon measure (see~\cite[Theorem~6.8]{EG}). In particular, we have
\begin{equation*}
\nabla^2\psi
=\nabla^2\phi_1-\nabla^2\phi_2\quad\text{a.e.\ in $\R^n$},
\end{equation*}
where $\nabla^2\phi_i\in L^1_{\rm loc}(\R^n;\R^{n\times n}_{\rm sym})$ denotes the absolutely continuous part of the measure $D^2\phi_i$ for $i\in\{1,2\}$. We claim that
\begin{align}\label{eq:der-sec}
\nabla^2\phi_i(x)\xi\cdot\xi\ge 0
\quad\text{for a.e.\ $x\in\R^n$ and every $\xi\in\R^n$}.
\end{align}
To see this, fix $\xi\in\R^n$. By the construction made in~\cite[Theorem~6.8]{EG}, the measure $D^2\phi_i\xi\cdot\xi$ is nonnegative. Hence, its absolutely continuous part is a nonnegative function, which coincides a.e.\ with $\nabla^2\phi_i\xi\cdot\xi$.

By combining~\eqref{eq:identita_quadratica} and~\eqref{eq:der-sec}, and applying~\cite[Lemma~22.5]{DalMaso} with $A=\nabla^2\phi_1$ and $B=\nabla^2\phi_2$, we deduce that $\nabla^2\phi_1=\nabla^2\phi_2\quad\text{a.e. in }\mathbb{R}^n$. In particular, 
\[
\nabla^2\psi=0\quad\text{a.e. in $\R^n$}.
\]
Hence, the function $\psi$ is affine, i.e.,
\[
\psi(\xi)=a\cdot\xi+b\quad\text{for every $\xi\in\R^n$}.
\]
As $\psi(0)=0$, we have $b=0$, and the thesis follows.
\end{proof}

As a consequence of Lemma~\ref{lem:uniq_rappr} and a blow-up argument, we deduce the uniqueness of the integral representation of both the local and nonlocal functionals $F_h$ and $F^s_h$ in~\eqref{eq:Fh} and~\eqref{eq:Fhs}.

\begin{proposition}\label{prop:uniq_rapp}
Let $\Omega \subset \R^n$ be an open set and let $\varphi_1, \varphi_2 \colon \Omega \times \R^n \to \R$ be two Borel functions satisfying for every $i \in \{1,2\}$ and a.e.\ $x\in\Omega$:
\begin{itemize}
\item[\emph{(i)}] the function $\varphi_i(x,\,\cdot\,)$ is of class $C^1(\R^n)$ and convex, $\varphi_i(x,0) = 0$, and $\nabla_\xi \varphi_i(x,0) = 0$,
\item[\emph{(ii)}] there exist $q \in [1,\infty)$ and a positive constant $C$, independent of $x$ and $\xi$, such that 
\[
|\varphi_i(x,\xi)| \le C(|\xi| + |\xi|^q) \quad \text{for every $i\in\{1,2\}$, a.e.\ $x \in \Omega$, and every $\xi \in \R^n$}.
\]
\end{itemize}
Let $\sigma \in (0,1]$ be fixed and assume that
\begin{equation}\label{eq:integral_identity_2}
\int_\Omega \varphi_1(x, \nabla^\sigma u(x))\,\d x
=
\int_\Omega \varphi_2(x, \nabla^\sigma u(x))\,\d x
\quad \text{for every $u \in C^\infty_c(\Omega)$},
\end{equation}
with the convention $\nabla^1 = \nabla$. Then,
\begin{equation}\label{eq:varphi_indentity}
\varphi_1(x,\xi) = \varphi_2(x,\xi) \quad \text{for a.e.\ $x \in \Omega$ and for every $\xi \in \R^n$}.
\end{equation}
\end{proposition}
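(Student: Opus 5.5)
The plan is a blow-up at Lebesgue points. For a.e.\ $x_0\in\Omega$ I would show that the frozen integrands $\phi_i\coloneqq\varphi_i(x_0,\cdot\,)$ satisfy the hypothesis~\eqref{eq:integral_identity} of Lemma~\ref{lem:uniq_rappr}. That lemma gives $\phi_1-\phi_2=a\cdot\xi$. Since both $\phi_i$ are $C^1$ with $\nabla\phi_i(0)=0$ by (i), differentiating at $\xi=0$ forces $a=0$, which is~\eqref{eq:varphi_indentity}.

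\emph{Scaling and blow-up.} Fix $u\in C_c^\infty(\R^n)$, and for $x_0\in\Omega$ and small $r>0$ set $u_r(x)\coloneqq r^{\sigma}u((x-x_0)/r)$. Then $u_r\in C_c^\infty(\Omega)$, and homogeneity of the kernel gives $\nabla^\sigma u_r(x)=(\nabla^\sigma u)((x-x_0)/r)$; for $\sigma=1$ this is the usual chain rule. Substituting $u_r$ into~\eqref{eq:integral_identity_2}, changing variables $x=x_0+ry$ and dividing by $r^n$ yields
\[
\int_{(\Omega-x_0)/r}\varphi_1(x_0+ry,\nabla^\sigma u(y))\,\d y=\int_{(\Omega-x_0)/r}\varphi_2(x_0+ry,\nabla^\sigma u(y))\,\d y .
\]
I then let $r\to0$ and aim for $\int_{\R^n}\phi_1(\nabla^\sigma u)\,\d y=\int_{\R^n}\phi_2(\nabla^\sigma u)\,\d y$.

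\emph{Controlling the limit.} This is the step I expect to be the main obstacle, because for $\sigma<1$ the function $\nabla^\sigma u$ is not compactly supported. I would handle it in two parts.
\begin{itemize}
\item \emph{Tail.} Since $|\nabla^\sigma u(y)|\le C_u(1+|y|)^{-n-\sigma}$ and is bounded, hypothesis (ii) makes $|\varphi_i(x,\nabla^\sigma u(y))|\le C(|\nabla^\sigma u|+|\nabla^\sigma u|^q)$, which is integrable on $\R^n$. Hence the contribution of $\{|y|>R\}$ is uniformly small as $R\to\infty$, independently of $r$ and of $x_0$.
\item \emph{Bounded region.} On $B_R$, with $M\coloneqq\|\nabla^\sigma u\|_\infty$, I view $x\mapsto\varphi_i(x,\cdot\,)|_{\overline B_M}$ as a map into the separable Banach space $C(\overline B_M)$. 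It is strongly measurable because $\varphi_i$ is Carathéodory, and locally Bochner integrable by (ii). The Lebesgue differentiation theorem for Bochner functions then gives, for a.e.\ $x_0$,
\[
r^{-n}\int_{B_{rR}}\sup_{|\xi|\le M}|\varphi_i(x_0+z,\xi)-\varphi_i(x_0,\xi)|\,\d z\to0 .
\]
This disposes of the region $B_R$.
\end{itemize}
Taking $M\in\N$, the exceptional null set is independent of $u$. To make it independent of $u$ altogether, I would first prove the identity for a countable family of $u$ that is dense in $C_c^\infty(\R^n)$ for a topology in which $\nabla^\sigma u$ converges in $L^1\cap L^\infty$. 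The identity then extends to all $u$ by continuity of $u\mapsto\int\phi_i(\nabla^\sigma u)$, using the local Lipschitz bound on convex functions with growth (ii).

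\emph{From $\nabla^\sigma$ back to $\nabla$.} When $\sigma<1$, Lemma~\ref{lem:uniq_rappr} needs classical gradients. Given $v\in C_c^\infty(\R^n)$, let $w\coloneqq(-\Delta)^{\frac{1-\sigma}{2}}v\in\T(\R^n)$, so that $\nabla^\sigma w=\nabla v$ by Proposition~\ref{prop:equiv}(ii). I would approximate $w$ by $w\chi(\cdot/R)\in C_c^\infty(\R^n)$. The Leibniz rule (Proposition~\ref{prop:Leibniz}) and estimate~\eqref{eq:nabla_NL} should give $\nabla^\sigma(w\chi(\cdot/R))\to\nabla^\sigma w$ in $L^1\cap L^q$ as $R\to\infty$, using the decay of $w$; the $L^1$ convergence of the term $w\nabla^\sigma\chi(\cdot/R)$ is the delicate point to check. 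Passing to the limit via the growth bound gives~\eqref{eq:integral_identity} for $\phi_1,\phi_2$, and Lemma~\ref{lem:uniq_rappr} concludes as described above.
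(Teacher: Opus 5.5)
Your proposal is correct and follows the paper's route. The steps match: blow-up at a.e.\ $x_0$, passage to the limit with the integrable majorant $C(|\nabla^\sigma u|+|\nabla^\sigma u|^q)$ from (ii), reduction from $\nabla^\sigma$ to $\nabla$ via $(-\Delta)^{\frac{1-\sigma}{2}}$ and Proposition~\ref{prop:equiv}(ii), and Lemma~\ref{lem:uniq_rappr} together with $\nabla_\xi\varphi_i(x_0,0)=0$ to remove the linear term.

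The differences are only technical. First, your Lebesgue-point argument for $x\mapsto\varphi_i(x,\cdot\,)|_{\overline B_M}\in C(\overline B_M)$ proves the convergence $\varphi_i(x_0+ry,\xi)\to\varphi_i(x_0,\xi)$, in the averaged sense and uniformly for $|\xi|\le M$. The paper's dominated-convergence step needs exactly this but takes it for granted; Lebesgue points of the composed maps $x\mapsto\varphi_i(x,\nabla^\sigma u(x))$ do not by themselves give it. Your version also makes the extra countable-family step unnecessary. Second, your cutoff $w\chi(\cdot/R)$ replaces the paper's density argument in $H^{\sigma,1}(\R^n)\cap H^{\sigma,q}(\R^n)$. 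The term you flagged is harmless: $\|w\nabla^\sigma\chi(\cdot/R)\|_{L^1}\le R^{-\sigma}\|\nabla^\sigma\chi\|_{L^\infty}\|w\|_{L^1}$, and $w\in L^1$ because $|w(x)|\lesssim(1+|x|)^{-n-1+\sigma}$.
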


\begin{proof}
Let us introduce the Banach space
\begin{equation*}
    \mathcal X \coloneqq H^{\sigma,1}(\R^n) \cap H^{\sigma,q}(\R^n),
\end{equation*}
endowed with the norm
\begin{equation*}
    \|u\|_{\mathcal X} \coloneqq \|u\|_{H^{\sigma,1}(\R^n)} + \|u\|_{H^{\sigma,q}(\R^n)} \quad \text{for every } u \in \mathcal X,
\end{equation*}
with the convention that $H^{1,q}(\R^n) = W^{1,q}(\R^n)$ for $q\in [1,\infty)$. Let $\mathcal{D} \subset C^\infty_c(\R^n)$ be a countable dense subset of $\mathcal X$.
We use a blow-up argument. 
Let $x_0\in\Omega$ be such that assumptions (i)–(ii) hold, and such that $x_0$ is a Lebesgue point of the functions $x \mapsto \varphi_i(x,\nabla^\sigma u(x))$ for every $i \in \{1,2\}$ and every $u \in \mathcal{D}$. Since $\mathcal D$ is countable, almost every point of $\Omega$ enjoys this property.
To conclude, we show that~\eqref{eq:varphi_indentity} holds at $x_0$.
Let $u \in \mathcal{D}$ and let $r>0$ be fixed. 
We define
\begin{equation*}
    u_{x_0,r}(x) \coloneqq r^{-\sigma} u(r(x-x_0)) \quad \text{for every $x \in \R^n$}.
\end{equation*}
By performing a change of variables, for every $i \in \{1,2\}$ we get
\begin{align*}
r^n \int_\Omega \varphi_i(x, \nabla^\sigma u_{x_0,r}(x))\,\d x
=\int_{\Omega_{x_0,r}} \varphi_i\left(x_0 + \frac{y}{r}, \nabla^\sigma u(y)\right) \d y,
\end{align*}
where
\[
\Omega_{x_0,r} \coloneqq \left\{y \in \R^n : x_0 + \frac{y}{r} \in \Omega\right\}.
\]
Note that $u_{x_0,r} \in C^\infty_c(\Omega)$ for $r>0$ sufficiently large. Therefore, by~\eqref{eq:integral_identity_2}, for every $u \in \mathcal{D}$ and for every $r>0$ sufficiently large we have
\begin{equation}\label{eq:blow-up_0}
    \int_{\Omega_{x_0,r}} \varphi_1\left(x_0+\frac{y}{r}, \nabla^\sigma u(y)\right) \d y
    = \int_{\Omega_{x_0,r}} \varphi_2\left(x_0+\frac{y}{r}, \nabla^\sigma u(y)\right) \d y.
\end{equation}
In order to pass to the limit in~\eqref{eq:blow-up_0}, we observe that, for every $r>0$, by (ii),
\begin{equation*}
    \left|\varphi_i\left(x_0 + \frac{y}{r}, \nabla^\sigma u(y)\right)\right| \le C \left(|\nabla^\sigma u(y)| + |\nabla^\sigma u(y)|^q \right) \quad \text{for a.e.\ $y\in \R^n$}.
\end{equation*}
Since $C(|\nabla^\sigma u| + |\nabla^\sigma u|^q)\in L^1(\R^n)$ because $u\in\mathcal D\subset\mathcal X$, by letting $r \to \infty$ in~\eqref{eq:blow-up_0}, we obtain
\begin{equation*}
    \int_{\R^n} \varphi_1(x_0, \nabla^\sigma u(y))\,\d y
    = \int_{\R^n} \varphi_2(x_0, \nabla^\sigma u(y))\,\d y
    \quad \text{for every } u \in \mathcal{D}.
\end{equation*}
Owing to (i)–(ii), the mappings $u\mapsto\int_{\mathbb R^n}\varphi_i(x_0,\nabla^\sigma u)$ are continuous on $\mathcal X$. 
Hence, the identity extends from $\mathcal D$ to $\mathcal X$, 
and we conclude that
\begin{equation}\label{eq:blow-up_1}
    \int_{\R^n} \varphi_1(x_0, \nabla^\sigma u(y))\,\d y
    = \int_{\R^n} \varphi_2(x_0, \nabla^\sigma u(y))\,\d y
    \quad \text{for every } u \in \mathcal X.
\end{equation}

If $\sigma = 1$, by Lemma~\ref{lem:uniq_rappr}, applied to $\phi_i = \varphi_i(x_0,\,\cdot\,)$ for $i\in\{1,2\}$, there exists $a\in\mathbb{R}^n$ such that
\begin{equation*}
\varphi_1(x_0,\xi)-\varphi_2(x_0,\xi)=a\cdot\xi\quad\text{for every $\xi\in\R^n$}.
\end{equation*}
In particular, by (i)
\[
a=\nabla_\xi\varphi_1(x_0,0)-\nabla_\xi\varphi_2(x_0,0)=0,
\]
and the thesis follows.

Let $\sigma \in (0,1)$, and let $v\in C_c^\infty(\R^n)$. 
By Proposition~\ref{prop:equiv}(ii), the function $u \coloneqq (-\Delta)^\frac{1-\sigma}{2}v$ belongs to $\mathcal X$ and satisfies
\begin{equation*}
    \nabla^\sigma u = \nabla v\quad\text{in $\R^n$}.
\end{equation*}
Hence, by recalling~\eqref{eq:blow-up_1}, we obtain
\begin{equation*}
    \int_{\R^n} \varphi_1(x_0,\nabla v(y))\,\d y
    = \int_{\R^n} \varphi_2(x_0,\nabla v(y))\,\d y
    \quad \text{for every $v \in C^\infty_c(\R^n)$}.
\end{equation*}
Since $\nabla_\xi\varphi_i(x_0,0)=0$ for $i\in\{1,2\}$ by (i), we can apply again Lemma~\ref{lem:uniq_rappr} to conclude that 
\[
\varphi_1(x_0,\xi) = \varphi_2(x_0,\xi)\quad\text{for every $\xi \in \R^n$}.
\]
Hence,~\eqref{eq:varphi_indentity} holds at $x_0$.
Since $x_0$ was arbitrary outside a negligible set, the thesis follows.
\end{proof}

For the next result, we introduce a localised version of the functionals $F_h$ defined in~\eqref{eq:Fh}. 
Let $\Omega\subset\R^n$ be a bounded open set and let $\varphi_h\in \Po_\Omega(p,\lambda,\Lambda)$ for $h\in\N\cup\{\infty\}$. 
We define the functionals $G_h\colon L^p(\Omega)\times\mathscr A(\Omega)\to [0,\infty]$ by
\begin{equation}\label{eq:GhU}
G_h(u,U)\coloneqq 
\begin{cases}
\displaystyle \int_U\varphi_h(x,\nabla u(x))\,\d x & \quad \text{if $U\in \mathscr A(\Omega)$ and $u\in W^{1,p}(U)$},\\
\infty & \quad \text{otherwise}.
\end{cases}
\end{equation}

\begin{remark}
The local functionals $F_h$ defined in~\eqref{eq:Fh} incorporate the homogeneous boundary condition into their domain, whereas the functionals $G_h$ do not. Introducing energies without prescribed boundary conditions is technically advantageous when passing from the nonlocal to the local setting. Indeed, given $u\in H^{s,p}_0(\Omega)$, the function $v$ provided by Proposition~\ref{prop:equiv}, belongs to $W^{1,p}_{\rm loc}(\mathbb R^n)$ but it does not satisfy any prescribed boundary condition on $\Omega$. Furthermore, by considering the localized functionals $G_h$ on open subsets of $\Omega$, we can directly apply the local results established in~\cite{ADMZ}. For these reasons, it will be more convenient to work with the more flexible family of functionals $G_h$.
\end{remark}

We recall the following compactness result, which can be deduced by~\cite{ADMZ}.

\begin{proposition}\label{prop:compactness}
Let $\Omega\subset\R^n$ be a bounded open set. 
Let $\varphi_h\colon \Omega\times\R^n \to [0,\infty)$, for $h\in\N$, be a sequence of Borel functions satisfying \emph{(P1)}, \emph{(P2)}, \emph{(P3)}, \emph{(P4)} of Proposition~\ref{prop:density-loc} for some positive constants $a_0,a_1,a_2,a_3$ and $\alpha\in(0,1)$ independent of $h$. 
Then, there exist a subsequence $(\varphi_{h_k})_k$ and a Borel function $\varphi_\infty\colon \Omega\times\R^n \to [0,\infty)$ satisfying \emph{(P1)}, \emph{(P2)}, \emph{(P3)} of Proposition~\ref{prop:density-loc} for the same constants $a_0,a_1,a_2$, such that
\begin{equation}\label{eq:Ansini_Zeppieri}
    \text{$G_{h_k}(\,\cdot\,,U)$ $\Gamma$-converges to $G_\infty(\,\cdot\,,U)$ strongly in $L^p(\Omega)$ as $k\to\infty$ for every $U\in\mathscr{A}(\Omega)$},
\end{equation}
where $G_h\colon L^p(\Omega)\times\mathscr{A}(\Omega)\to [0,\infty]$ denotes the functional associated with $\varphi_h$ for $h\in\N\cup\{\infty\}$, defined as in~\eqref{eq:GhU}.
\end{proposition}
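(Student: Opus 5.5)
The plan is to derive Proposition~\ref{prop:compactness} from the general $\Gamma$-compactness and integral representation theory for local integral functionals developed in~\cite{ADMZ}, applied to the localized functionals $G_h$. The proof has three steps: verify the hypotheses of the abstract compactness theorem; extract a subsequence and obtain an integral representation of the limit; and check that the limit integrand inherits (P1)--(P3) with the same constants.

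First, I would check that the integrands $\varphi_h$ lie in a class to which the results of~\cite{ADMZ} apply.
\begin{itemize}
\item They are Borel.
\item By (P1) they are convex and of class $C^1$ in $\xi$.
\item By (P2) they satisfy the standard $p$-growth bounds $a_0|\xi|^p\le\varphi_h(x,\xi)\le a_1(1+|\xi|^p)$.
\item By (P3) they have the Lipschitz-type continuity in $\xi$ with constants independent of $h$.
\end{itemize}
By the abstract compactness theorem for increasing set functions (De Giorgi--Letta framework, cf.~\cite{DalMaso}), there is a subsequence $(h_k)_k$ such that, for every $U\in\mathscr A(\Omega)$, the $\Gamma$-limit $G(\,\cdot\,,U)$ of $G_{h_k}(\,\cdot\,,U)$ in the strong topology of $L^p(\Omega)$ exists. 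This uses a countable dense family of open sets, inner regularization, and the fundamental estimate, which holds uniformly thanks to the $p$-growth in (P2).

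Next, I would show that $G(u,\cdot)$ is the restriction to open sets of a Borel measure, that it is local, lower semicontinuous, and translation-invariant in $u$ (adding constants), and that it satisfies $a_0\int_U|\nabla u|^p\le G(u,U)\le a_1\int_U(1+|\nabla u|^p)$. These are the hypotheses of the integral representation theorem of Buttazzo--Dal Maso type, as used in~\cite{ADMZ}. It provides a Borel function $\varphi_\infty$ with $G(u,U)=\int_U\varphi_\infty(x,\nabla u)\,\d x$ for $u\in W^{1,p}(U)$, with $\varphi_\infty(x,\cdot)$ convex because $G(\cdot,U)$ is convex as a $\Gamma$-limit of convex functionals. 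For $u\notin W^{1,p}(U)$, equicoercivity and the lower bound give $G(u,U)=\infty$. Hence $G=G_\infty$.

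Finally, I would verify that $\varphi_\infty$ satisfies (P1)--(P3) with the same constants. The growth bounds (P2) pass to the limit directly: the lower bound follows from the liminf inequality together with the lower semicontinuity of $\int|\nabla u|^p$, and the upper bound from testing with constant recovery sequences. In particular $\varphi_\infty(x,0)=0$. For the Lipschitz estimate (P3), I would use the representation $\varphi_\infty(x,\xi)=\lim_{r\to0}|B_r|^{-1}\,m_\infty(\xi\cdot x,B_r(x))$, where $m_\infty$ is the minimum with affine boundary data, and compare the minimum problems for $\xi$ and $\eta$ using the uniform estimate (P3) of the $\varphi_h$. This is where~\cite{ADMZ} supplies the precise argument preserving the constant $a_2$.

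The $C^1$ regularity and the condition $\nabla_\xi\varphi_\infty(x,0)=0$ in (P1) are the main obstacle. I expect they come from the uniform Hölder estimate (P4) on the gradients: one shows that the convex limit density satisfies a quantitative $C^{1,\alpha'}$ bound, as established in~\cite{ADMZ}. Then $\nabla_\xi\varphi_\infty(x,0)=0$ follows from $\varphi_\infty\ge0=\varphi_\infty(x,0)$, since $0$ is a minimum point of a differentiable function.
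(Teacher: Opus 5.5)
Your route is essentially the paper's. The paper invokes \cite[Theorem~20.4]{DalMaso} to obtain the subsequence, the $\Gamma$-convergence for every $U\in\mathscr A(\Omega)$, a convex Borel integrand, and the bounds (P2)--(P3). Your first two steps (De Giorgi--Letta compactness, fundamental estimate, Buttazzo--Dal Maso representation) unpack exactly that theorem. The paper then gets $C^1$ regularity from (P4) via \cite[Theorem~2.8]{ADMZ}, as you anticipate.

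The one genuinely different step is $\nabla_\xi\varphi_\infty(x,0)=0$. The paper derives it from the convergence of momenta, \cite[(4.10a) in Theorem~4.5]{ADMZ}, applied with $u\equiv 0$. You instead observe that $\varphi_\infty(x,\cdot)\ge 0=\varphi_\infty(x,0)$ and that $\varphi_\infty(x,\cdot)$ is differentiable, so $0$ is a minimum point. This is correct and more elementary. It also shows that the last requirement in (P1) is redundant once the other conditions hold.

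Two points need tightening.

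First, $\varphi_\infty(x,0)=0$ is not a consequence of (P2), which at $\xi=0$ only gives $0\le\varphi_\infty(x,0)\le a_1$. You must use $\varphi_h(\cdot,0)=0$ from (P1) together with the constant sequence $u\equiv 0$. This gives $G_\infty(0,U)\le\liminf_k G_{h_k}(0,U)=0$, which is precisely the paper's argument. Your "constant recovery sequence" idea does this only if it is applied at $u\equiv0$ with this extra information.

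Second, your sketch for (P3) with the same constant $a_2$ does not work as written. Let $v$ minimise the problem with boundary datum $\xi\cdot x$ on $B$, and test the $\eta$-problem with $v+(\eta-\xi)\cdot x$. The estimate then carries the factor $\frac{1}{|B|}\int_B(1+|\nabla v|+|\nabla v+\eta-\xi|)^{p-1}\,\d x$.
\begin{itemize}
\item This factor can only be bounded through $a_1/a_0$, via the energy bound on $v$.
\item By Jensen's inequality it is never smaller than $(1+|\xi|+|\eta|)^{p-1}$. Indeed, the mean of $\nabla v$ on $B$ is $\xi$, and $\zeta\mapsto(1+|\zeta|+|\zeta+\eta-\xi|)^{p-1}$ is convex for $p\ge 2$.
\end{itemize}
So this comparison in general only yields a larger constant depending on $a_0,a_1,a_2,p$, and deferring to \cite{ADMZ} does not fix it.

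It is cleaner to take (P3) directly from the closedness of the class in \cite[Theorem~20.4]{DalMaso}, as the paper does. Alternatively, any convex $\varphi$ with $0\le\varphi\le a_1(1+|\xi|^p)$ satisfies (P3) with a constant depending only on $a_1$ and $p$. That suffices for the later applications, where membership of the limit in $\mathbb{P}_\Omega(p,\lambda,\Lambda)$ is established separately.
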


\begin{proof}
Since $\varphi_h$ satisfies (P2) and (P3) for every $h\in\N$,~\cite[Theorem~20.4]{DalMaso} yields the existence of a subsequence $(\varphi_{h_k})_k$ and a Borel function $\varphi_\infty\colon \Omega\times\R^n\to[0,\infty)$ satisfying (P2) and (P3), such that~\eqref{eq:Ansini_Zeppieri} holds.
In particular, the function $\varphi_\infty(x,\,\cdot\,)$ is convex for a.e.\ $x\in\Omega$. 

Moreover, since $\varphi_{h}$ satisfies (P1) and (P4) for every $h\in\N$, it follows from~\cite[Theorem~2.8]{ADMZ} that $\varphi_\infty(x,\,\cdot\,)\in C^1(\R^n)$ for a.e.\ $x\in\Omega$. 
Furthermore, since $\varphi_{h}$ satisfies (P1), we have $G_{h_k}(0,\Omega)=0$ for every $k\in\mathbb{N}$. 
Hence, by ($\Gamma$1) in Definition~\ref{def:Gamma},
\[
G_\infty(0,\Omega)\le0.
\]
On the other hand, since $\varphi_\infty$ satisfies (P2), we have $G_\infty(0,\Omega)\ge0$. Therefore,
\[
G_\infty(0,\Omega)=0.
\]
Hence, $\varphi_\infty(x,0)=0$ for a.e. $x\in\Omega$.
Finally, by (4.10a) in~\cite[Theorem~4.5]{ADMZ}, we have 
\[
0=\nabla_\xi \varphi_{h_k}(\,\cdot\,,0) \to \nabla_\xi \varphi_\infty(\,\cdot\,,0) \quad \text{weakly in $L^{p'}(\Omega;\R^n)$}.
\]
Hence, $\varphi_\infty$ satisfies (P1).
\end{proof}

As a consequence of the compactness result Proposition~\ref{prop:compactness}, by exploiting~\cite[Theorem~4.5]{ADMZ}, we can deduce that the class of localised functionals $G_h$, associated with $\Po_\Omega(p,\lambda,\Lambda)$, is compact with respect to the $\Gamma$-convergence.

\begin{corollary}\label{coro:compactness}
Let $\Omega\subset\R^n$ be a bounded open set and for every $h\in\N$ let 
$\varphi_h\in\Po_\Omega(p,\lambda,\Lambda)$.
Then, there exist a subsequence $(\varphi_{h_k})_k\subset\Po_\Omega(p,\lambda,\Lambda)$ and a Borel function 
$\varphi_\infty\in \Po_\Omega(p,\lambda,\Lambda)$ such that~\eqref{eq:Ansini_Zeppieri} holds.
\end{corollary}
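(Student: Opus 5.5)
We apply Proposition~\ref{prop:compactness} to the sequence $(\varphi_h)_h\subset\Po_\Omega(p,\lambda,\Lambda)$. By Proposition~\ref{prop:density-loc}, (P1)--(P4) hold with constants $a_0,a_1,a_2,a_3,\alpha$ independent of $h$. This yields a subsequence $(\varphi_{h_k})_k$ and a Borel function $\varphi_\infty$ satisfying (P1)--(P3) such that~\eqref{eq:Ansini_Zeppieri} holds. What remains, and this is the real content of the statement, is to show $\varphi_\infty\in\Po_\Omega(p,\lambda,\Lambda)$. Since $\varphi_\infty(x,\cdot)$ is $C^1$ and convex with $\varphi_\infty(x,0)=0$, we set
\[
A_\infty(x,\xi)\coloneqq\nabla_\xi\varphi_\infty(x,\xi).
\]
This map is Carathéodory, it is conservative, and its potential defined by~\eqref{eq:fA} is exactly $\varphi_\infty$. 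Indeed, $\int_0^1\nabla_\xi\varphi_\infty(x,t\xi)\cdot\xi\,\d t=\varphi_\infty(x,\xi)-\varphi_\infty(x,0)$. Hence it suffices to prove $A_\infty\in\M_\Omega(p,\lambda,\Lambda)$, that is, (M1)--(M3). (M1) is part of (P1).

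To get (M2) and (M3) we would like to identify $A_\infty$ with an $H$-limit. By Proposition~\ref{prop:Hcomploc}, after a further subsequence, $\mathcal A_{h_k}$ $H$-converges to some $\widehat{\mathcal A}_\infty$ with $\widehat A_\infty\in\M_\Omega(p,\lambda,\Lambda)$. We then need to show that $\widehat A_\infty=A_\infty$, and there are two routes.

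The first route is to use~\cite[Theorem~4.5]{ADMZ}, which is already invoked for (4.10a) in the proof of Proposition~\ref{prop:compactness}. Under (P1)--(P4), it asserts convergence of momenta: if $u_k$ are minimisers or solutions with fixed data, then $\nabla_\xi\varphi_{h_k}(\cdot,\nabla u_k)\to\nabla_\xi\varphi_\infty(\cdot,\nabla u_\infty)$ weakly in $L^{p'}$. Applied to the Dirichlet problems $-\div A_{h_k}(x,\nabla u)=f$, which are the Euler--Lagrange equations of $F_{h_k}(u)-\langle f,u\rangle$, this gives: minimisers converge to the minimiser of the limit energy (by $\Gamma$-convergence plus equicoercivity from (P2) and Rellich), and momenta converge to $A_\infty(\cdot,\nabla u_\infty)$. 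Hence $\mathcal A_{h_k}$ $H$-converges to the operator associated with $A_\infty$.

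Now the $H$-limit is unique in the local setting (\cite[Proposition~2.9]{DASC}). Since $\widehat A_\infty$ is also an $H$-limit, we obtain $A_\infty=\widehat A_\infty\in\M_\Omega(p,\lambda,\Lambda)$. Conservativity of $A_\infty$ is automatic, so $A_\infty\in\M^{\rm grad}_\Omega(p,\lambda,\Lambda)$ and $\varphi_\infty=\varphi_{A_\infty}\in\Po_\Omega(p,\lambda,\Lambda)$.

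The second route, which avoids relying on the precise form of the ADMZ momentum result, goes as follows. Take the potential $\varphi_{\widehat A_\infty}$ and show that $F_{h_k}$ $\Gamma$-converges to the functional with density $\varphi_{\widehat A_\infty}$. Then Proposition~\ref{prop:uniq_rapp} with $\sigma=1$, applied via~\eqref{eq:integral_identity_2} on $C_c^\infty(\Omega)$, identifies $\varphi_\infty=\varphi_{\widehat A_\infty}$. Its hypotheses (i)--(ii) are satisfied with $q=p$, using (P2) and (P1) for both functions. However, this route needs $H\Rightarrow\Gamma$ locally, which again comes from~\cite{ADMZ}.

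The main obstacle is precisely this identification step, namely linking the $\Gamma$-limit density to the $H$-limit map so that the structural conditions (M2) and (M3) pass to $\nabla_\xi\varphi_\infty$. The $\Gamma$-compactness alone only gives (P2)--(P3), which is too weak to yield (M3). I would first try the momentum convergence of~\cite[Theorem~4.5]{ADMZ} combined with local $H$-uniqueness. A subtle point is that uniqueness of $H$-limits must be applied to the specific subsequence; the final subsequence is the one extracted twice.
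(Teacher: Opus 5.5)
Your first route is essentially the paper's proof: apply Proposition~\ref{prop:compactness}, set $A_\infty\coloneqq\nabla_\xi\varphi_\infty$, pass from \eqref{eq:Ansini_Zeppieri} to $\Gamma$-convergence of the Dirichlet energies $F_h$ via \cite[Theorem~21.1]{DalMaso}, use convergence of minimisers of $F_h-\langle f,\cdot\rangle$ together with \cite[Theorem~4.5]{ADMZ} to get $H$-convergence to $A_\infty$, and conclude $A_\infty\in\M_\Omega(p,\lambda,\Lambda)$ from Proposition~\ref{prop:Hcomploc}; the paper leaves implicit the $H$-uniqueness step that you spell out. The one detail you gloss over is uniqueness of the limit minimiser, which the paper secures by passing the uniform convexity estimate \eqref{eq:Fh-convexity} to $F_\infty$; alternatively, your extra $H$-convergent subsequence already identifies the limit of the minimisers.
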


\begin{proof}
Let $\varphi_h\in\Po_\Omega(p,\lambda,\Lambda)$ for every $h\in\mathbb N$.
By Proposition~\ref{prop:density-loc} and 
Proposition~\ref{prop:compactness}, there exist a subsequence $(\varphi_{h_k})_k$ and a Borel function $\varphi_\infty$ satisfying (P1), (P2), (P3) of Proposition~\ref{prop:density-loc} with the same constants $a_0,a_1,a_2$, and such that~\eqref{eq:Ansini_Zeppieri} holds.
To conclude, it remains to prove that $\varphi_\infty\in\Po_\Omega(p,\lambda,\Lambda)$. 
To this end, define
\begin{equation}\label{eq:def_A_infty}
    A_\infty\coloneqq\nabla_\xi \varphi_\infty.
\end{equation}
It is therefore enough to show that
\[
A_\infty\in \M^{\rm grad}_\Omega(p,\lambda,\Lambda).
\]
Let $A_h\coloneqq \nabla_\xi \varphi_h\in \M_\Omega^{\rm grad}(p,\lambda,\Lambda)$ for every $h\in\N$.
Without loss of generality, we may assume that the $\Gamma$-convergence in~\eqref{eq:Ansini_Zeppieri} holds for the whole sequence $G_h$, namely
\begin{equation}\label{eq:Ansini_Zeppieri_Omega}
    \text{$G_h(\,\cdot\,,U)$ $\Gamma$-converges to $G_\infty(\,\cdot\,,U)$ strongly in $L^p(\Omega)$ as $h\to\infty$ for every $U\in\mathscr{A}(\Omega)$}.
\end{equation}
In particular,~\eqref{eq:Ansini_Zeppieri_Omega} holds for $U=\Omega$.
Then, by~\cite[Theorem~21.1]{DalMaso},
\begin{equation}\label{eq:Fh-gamma}
\text{$F_h$ $\Gamma$-converges to $F_\infty$ strongly in $L^p(\Omega)$ as $h\to\infty$},
\end{equation}
where $F_h$ is defined as in~\eqref{eq:Fh} for every $h\in \N\cup\{\infty\}$. 

We claim that $(F_h)_h$ are uniformly convex. Indeed, thanks to Proposition~\ref{prop:potential} and (M2), for a.e.\ $x\in\Omega$ and every $\xi,\eta\in\R^n$ we have
\begin{align*}
\varphi_h(x,\xi)- \varphi_h(x,\eta)-A_h(x,\eta)\cdot(\xi-\eta)&=\int_0^1 (A_h(x,\eta+t(\xi-\eta))-A_h(x,\eta))\cdot (\xi-\eta)\,{\rm d}t\\
&\ge \frac{\lambda}{p}|\xi-\eta|^p.
\end{align*}
Hence, for a.e.\ $x\in\Omega$, for every $\xi,\eta,\zeta\in\R^n$, and for every $t\in [0,1]$ we have
\begin{align*}
t\varphi_h(x,\xi)- t\varphi_h(x,\zeta)&\ge tA_h(x,\zeta)\cdot(\xi-\zeta)+\frac{\lambda}{p}t|\xi-\zeta|^p,\\
(1-t)\varphi_h(x,\eta)- (1-t)\varphi_h(x,\zeta)&\ge (1-t)A_h(x,\zeta)\cdot(\eta-\zeta)+\frac{\lambda}{p}(1-t)|\eta-\zeta|^p.
\end{align*}
By summing the two inequalities and choosing $\zeta=t\xi+(1-t)\eta$, we get
\[
t\varphi_h(x,\xi)+(1-t)\varphi_h(x,\eta)\ge \varphi_h(x,t\xi+(1-t)\eta) + \frac{\lambda}{p}t(1-t)[t^{p-1}+(1-t)^{p-1}]|\xi-\eta|^p.
\]
For every $u,v\in W^{1,p}(\Omega)$, by taking $\xi=\nabla u(x)$ and $\eta=\nabla v(x)$ and integrating over $x\in \Omega$, for every $t\in [0,1]$ we get
\[
F_h(tu+(1-t)v) + \frac{\lambda}{p}[t^p(1-t)+t(1-t)^p]\|\nabla u-\nabla v\|_{L^p(\Omega)}^p\le tF_h(u)+(1-t)F_h(v).
\]
Thus, by Poincaré's inequality, for every $u,v\in L^p(\Omega)$ and $t\in [0,1]$ we deduce
\begin{equation}\label{eq:Fh-convexity}
F_h(tu+(1-t)v) + C[t^p(1-t)+t(1-t)^p]\|u-v\|_{L^p(\Omega)}^p\le tF_h(u)+(1-t)F_h(v),
\end{equation}
where $C$ is a positive constant depending on $n$, $p$, $\Omega$, and $\lambda$. 

By exploiting~\eqref{eq:Fh-gamma} and passing to the limit as $h\to\infty$ in~\eqref{eq:Fh-convexity}, we conclude that for every $u,v\in L^p(\Omega)$ and $t\in [0,1]$
\[
F_\infty(tu+(1-t)v) + C[t^p(1-t)+t(1-t)^p]\|u-v\|_{L^p(\Omega)}^p\le tF_\infty(u)+(1-t)F_\infty(v),
\]
that is, $F_\infty$ is uniformly convex. 
Let $f\in W^{-1,p'}(\Omega)$ be fixed, and set
\begin{equation*}
F_h^f(u)\coloneqq 
\begin{cases}
F_h(u)-\langle f,u\rangle_{W^{-1,p'}(\Omega)\times W^{1,p}(\Omega)}&\text{if $u\in W^{1,p}_0(\Omega)$},\\
\infty &\text{otherwise}.
\end{cases}
\end{equation*}
By the lower bound in (P2) we obtain that
\[
\text{$F_h^f$ $\Gamma$-converges to $F_\infty^f$ strongly in $L^p(\Omega)$ as $h\to\infty$}.
\]
Since the functionals $(F_h^f)_h$ are equi-coercive on $L^p(\Omega)$, denoting by $u_h^f$ the unique minimiser of $F_h^f$ for every $h\in\mathbb{N}\cup\{\infty\}$, by~\cite[Corollary~7.24]{DalMaso} we have
\begin{equation}\label{eq:convergenza_minimi}
u^f_h\to u^f_\infty\quad\text{weakly in $W^{1,p}_0(\Omega)$ and strongly in $L^p(\Omega)$ as $h\to\infty$}.
\end{equation}
Moreover,
\begin{equation*}
F_h^f(u_h^f) \to F^f_\infty(u^f_\infty)\quad\text{as $h\to\infty$},
\end{equation*}
which implies that
\[
G_h(u_h^f,\Omega)\to G_\infty(u^f_\infty,\Omega)\quad\text{as $h\to\infty$}.
\]
Hence, by~\cite[Theorem~4.5]{ADMZ}, it holds that
\begin{equation}\label{eq:convergenza_momenti}
A_h(\,\cdot\,,\nabla u_h^f)\to A_\infty(\,\cdot\,,\nabla u_\infty^f)\quad\text{weakly in $L^{p'}(\Omega;\R^n)$ as $h\to\infty$}.
\end{equation}
Note that for every $h\in\N\cup\{\infty\}$ the function $u_h^f$ is the unique weak solution of
\begin{equation*}
\begin{cases}
-\div(A_h(\,\cdot\,,\nabla u_h^f))=f&\text{in $\Omega$},\\
u_h^f=0&\text{on }\partial\Omega,
\end{cases}
\end{equation*}
which is the Euler–Lagrange equation associated with the functional $F_h^f$ (the uniqueness follows from the uniform convexity of $F_h^f$ for every $h\in\N\cup\{\infty\}$). 
Therefore,~\eqref{eq:convergenza_minimi} and~\eqref{eq:convergenza_momenti} imply that
\[
\mathcal A_h\coloneqq-\operatorname{div}(A_h(\,\cdot\,,\nabla\,))\text{ $H$-converges to }\mathcal A_\infty\coloneqq-\operatorname{div}(A_\infty(\,\cdot\,,\nabla\,))\text{ as $h\to\infty$}.
\]
Hence, Proposition~\ref{prop:Hcomploc} yields $A_\infty\in\M_\Omega(p,\lambda,\Lambda)$. Since $A_\infty=\nabla_\xi\varphi_\infty$ by~\eqref{eq:def_A_infty}, we conclude that $A_\infty\in\M_\Omega^{\rm grad}(p,\lambda,\Lambda)$. Therefore, by definition of $\Po_\Omega(p,\lambda,\Lambda)$, $\varphi_\infty=\varphi_{A_\infty}\in\Po_\Omega(p,\lambda,\Lambda)$. 
\end{proof}

As a consequence of Proposition~\ref{prop:uniq_rapp} and Corollary~\ref{coro:compactness}, we obtain the following nonlinear counterpart of \cite[Theorem~22.4]{DalMaso}, originally proved for classes of linear operators and the corresponding quadratic functionals.

\begin{proposition}\label{prop:gamma-equivalence}
Let $\Omega\subset\R^n$ be a bounded open set and let $\varphi_h\in\Po_\Omega (p,\lambda,\Lambda)$ for $h\in \N\cup\{\infty\}$. The following are equivalent:
\begin{itemize}
\item[\emph{(a)}] $G_h(\,\cdot\,,U)$ $\Gamma$-converges to $G_\infty(\,\cdot\,,U)$ strongly in $L^p(\Omega)$ as $h\to\infty$ for every $U\in \mathscr A(\Omega)$,
\item[\emph{(b)}] $G_h(\,\cdot\,,\Omega)$ $\Gamma$-converges to $G_\infty(\,\cdot\,,\Omega)$ strongly in $L^p(\Omega)$ as $h\to\infty$,
\item[\emph{(c)}] $F_h$ $\Gamma$-converges to $F_\infty$ strongly in $L^p(\Omega)$ as $h\to\infty$.
\end{itemize}
\end{proposition}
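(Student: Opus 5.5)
The plan is to prove the chain (a)$\Rightarrow$(b)$\Rightarrow$(c)$\Rightarrow$(a). The first implication is trivial: take $U=\Omega$ in (a). For (b)$\Rightarrow$(c) I would invoke \cite[Theorem~21.1]{DalMaso}, exactly as in the proof of Corollary~\ref{coro:compactness}. That result transfers the $\Gamma$-convergence of the unconstrained functionals $G_h(\,\cdot\,,\Omega)$ to the functionals with homogeneous Dirichlet condition $F_h$. Its hypotheses are supplied by the uniform growth (P2)--(P3) from Proposition~\ref{prop:density-loc}.

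The substantial implication is (c)$\Rightarrow$(a), and I would prove it by a compactness-plus-uniqueness argument together with the Urysohn property of $\Gamma$-convergence. First take an arbitrary subsequence of $(\varphi_h)_h$. By Corollary~\ref{coro:compactness} there is a further subsequence $(\varphi_{h_k})_k$ and some $\widehat\varphi_\infty\in\Po_\Omega(p,\lambda,\Lambda)$ such that $G_{h_k}(\,\cdot\,,U)$ $\Gamma$-converges to $\widehat G_\infty(\,\cdot\,,U)$ for every $U\in\mathscr A(\Omega)$. Applying (a)$\Rightarrow$(b)$\Rightarrow$(c) to this subsequence shows that $F_{h_k}$ $\Gamma$-converges to $\widehat F_\infty$, the Dirichlet functional associated with $\widehat\varphi_\infty$. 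By hypothesis (c), $F_{h_k}$ also $\Gamma$-converges to $F_\infty$. Uniqueness of $\Gamma$-limits then gives $F_\infty=\widehat F_\infty$ on $L^p(\Omega)$. In particular,
\[
\int_\Omega\varphi_\infty(x,\nabla u(x))\,\d x=\int_\Omega\widehat\varphi_\infty(x,\nabla u(x))\,\d x\quad\text{for every $u\in C_c^\infty(\Omega)$}.
\]

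Next I would apply Proposition~\ref{prop:uniq_rapp} with $\sigma=1$ and $q=p$. Hypothesis (i) is (P1). Hypothesis (ii) follows from (P3) with $\eta=0$ together with $\varphi(x,0)=0$: this gives $|\varphi(x,\xi)|\le a_2(1+|\xi|)^{p-1}|\xi|\le C(|\xi|+|\xi|^p)$. Note that the upper bound in (P2) is not of this form, so it is important to use (P3) here. Borel measurability is part of the definition of $\Po_\Omega$. The proposition yields $\varphi_\infty=\widehat\varphi_\infty$ for a.e.\ $x$ and every $\xi$, hence $G_\infty(\,\cdot\,,U)=\widehat G_\infty(\,\cdot\,,U)$ for all $U$. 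So every subsequence has a further subsequence along which (a) holds with the limit $G_\infty$. Since $\Gamma$-convergence in $L^p(\Omega)$, a separable metric space, has the Urysohn property \cite[Proposition~8.3]{DalMaso}, (a) holds for the whole sequence, separately for each $U$.

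The step I expect to be the main obstacle is the identification of the two limits. The equality $F_\infty=\widehat F_\infty$ only gives information through Dirichlet data on $\Omega$, that is, only on $C_c^\infty(\Omega)$. The real work is therefore showing that this determines the integrand pointwise, and this is exactly what Proposition~\ref{prop:uniq_rapp} provides through its blow-up argument and Lemma~\ref{lem:uniq_rappr}. The other point to check is that (b)$\Rightarrow$(c) via \cite[Theorem~21.1]{DalMaso} needs no regularity of $\partial\Omega$ beyond boundedness. If it does, I would instead argue (b)$\Rightarrow$(c) by the same compactness and uniqueness scheme, using Corollary~\ref{coro:compactness} and the argument in its proof.
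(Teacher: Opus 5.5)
Your proof is correct and follows the paper's argument exactly: (a)$\Rightarrow$(b) is trivial, (b)$\Rightarrow$(c) comes from \cite[Theorem~21.1]{DalMaso}, and (c)$\Rightarrow$(a) uses Corollary~\ref{coro:compactness}, uniqueness of the $\Gamma$-limit, Proposition~\ref{prop:uniq_rapp} with $\sigma=1$, and the Urysohn property. Your check of hypothesis (ii) of Proposition~\ref{prop:uniq_rapp} via (P3) with $\eta=0$ fills in a detail the paper leaves implicit, and your worry about $\partial\Omega$ is unnecessary: the paper applies \cite[Theorem~21.1]{DalMaso} to an arbitrary bounded open set, and the Dirichlet condition is recovered by gluing through the fundamental estimate with a cutoff compactly supported in $\Omega$, so no boundary regularity is needed.
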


\begin{proof}
{\bf (a) $\Longrightarrow$ (b)}. This implication is trivial.

{\bf (b) $\Longrightarrow$ (c)}. This implication is a consequence of~\cite[Theorem~21.1]{DalMaso}. 

{\bf (c) $\Longrightarrow$ (a)}. 
By Corollary~\ref{coro:compactness}, there exist a subsequence $(\varphi_{h_k})_k\subset \Po_\Omega(p,\lambda,\Lambda)$ and a Borel function $\widehat{\varphi}_\infty\in \Po_\Omega(p,\lambda,\Lambda)$ such that 
\[
\text{$G_{h_k}(\,\cdot\,,U)$ $\Gamma$-converges to $\widehat{G}_\infty(\,\cdot\,,U)$ strongly in $L^p(\Omega)$ as $k\to\infty$ for every $U\in \mathscr A(\Omega)$},
\]
where $\widehat{G}_\infty\colon L^p(\Omega)\times\mathscr{A}(\Omega)\to [0,\infty]$ is the functional associated with $\widehat{\varphi}_\infty$, defined as in~\eqref{eq:GhU}. 
Again, by~\cite[Theorem~21.1]{DalMaso},
\[
\text{$F_{h_k}$ $\Gamma$-converges to $\widehat{F}_\infty$ strongly in $L^p(\Omega)$ as $k\to\infty$},
\]
where $\widehat{F}_\infty\colon L^p(\Omega)\to [0,\infty]$ is the functional associated with $\widehat{\varphi}_\infty$, defined as in~\eqref{eq:Fh}. 
By assumption (c) and the uniqueness of the $\Gamma$-limit, we have
\[
F_\infty(u)=\widehat{F}_\infty(u)\quad\text{for every $u\in L^p(\Omega)$}.
\]
Therefore, by Proposition~\ref{prop:uniq_rapp}, we conclude that 
\[
\varphi_\infty(x,\xi)=\widehat\varphi_\infty(x,\xi)\quad\text{for a.e.\ $x\in\Omega$ and for every $\xi\in\R^n$}.
\]
Thus
\[
\text{$G_{h_k}(\,\cdot\,,U)$ $\Gamma$-converges to $G_\infty(\,\cdot\,,U)$ strongly in $L^p(\Omega)$ as $k\to\infty$ for every $U\in \mathscr A(\Omega)$}.
\]
The conclusion now follows from the Urysohn property of $\Gamma$-convergence~\cite[Proposition 8.3]{DalMaso}.
\end{proof}

As a consequence of Corollary~\ref{coro:compactness} and Proposition~\ref{prop:gamma-equivalence}, we obtain the following $\Gamma$-compactness result for the class of local functionals $(F_h)_h$.

\begin{corollary}\label{cor:local_Gamma-compactness}
Let $\Omega\subset\R^n$ be a bounded open set and let $\varphi_h\in\Po_\Omega (p,\lambda,\Lambda)$ for $h\in \N$. Then, there exist a subsequence $(\varphi_{h_k})_k\subset\Po_\Omega (p,\lambda,\Lambda)$ and a Borel function $\varphi_\infty\in \Po_\Omega (p,\lambda,\Lambda)$ such that
\begin{equation}\label{eq:Gamma-loc-comp}
\text{$F_{h_k}$ $\Gamma$-converges to $F_\infty$ strongly in $L^p(\Omega)$ as $k\to\infty$},
\end{equation}
where, for every $h\in\N\cup\{\infty\}$, $F_h$ are the functionals defined in~\eqref{eq:Fh}.
\end{corollary}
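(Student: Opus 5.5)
The plan is to combine Corollary~\ref{coro:compactness} with the implication (a)$\Rightarrow$(c) of Proposition~\ref{prop:gamma-equivalence}; the statement is essentially a direct consequence of these two results.

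First, given $\varphi_h\in\Po_\Omega(p,\lambda,\Lambda)$ for $h\in\N$, Corollary~\ref{coro:compactness} provides a subsequence $(\varphi_{h_k})_k$ and a Borel function $\varphi_\infty\in\Po_\Omega(p,\lambda,\Lambda)$ such that~\eqref{eq:Ansini_Zeppieri} holds. That is, $G_{h_k}(\,\cdot\,,U)$ $\Gamma$-converges to $G_\infty(\,\cdot\,,U)$ strongly in $L^p(\Omega)$ for every $U\in\mathscr A(\Omega)$, where $G_\infty$ is built from $\varphi_\infty$ via~\eqref{eq:GhU}.

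Second, I apply Proposition~\ref{prop:gamma-equivalence} to the sequence $(\varphi_{h_k})_k$ together with the limit $\varphi_\infty$. All of these lie in $\Po_\Omega(p,\lambda,\Lambda)$, so the proposition's hypotheses are met. Condition (a) holds, hence condition (c) holds, which gives~\eqref{eq:Gamma-loc-comp}. Only the easy chain (a)$\Rightarrow$(b)$\Rightarrow$(c) is needed, and the step (b)$\Rightarrow$(c) rests on~\cite[Theorem~21.1]{DalMaso}.

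There is no genuine obstacle. The one point to check is that the limit density produced by the compactness result lies in the class $\Po_\Omega(p,\lambda,\Lambda)$ itself, not just in a larger class satisfying (P1)--(P3). Corollary~\ref{coro:compactness} already establishes this through the $H$-compactness argument, so nothing further is required.
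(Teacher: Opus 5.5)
Your proposal is correct and matches the paper's proof, which likewise obtains the statement directly from Corollary~\ref{coro:compactness} combined with the implication (a)$\Rightarrow$(c) of Proposition~\ref{prop:gamma-equivalence}. You are also right that only this direction is needed. It does not rely on any compactness or uniqueness argument, and membership of the limit density in $\Po_\Omega(p,\lambda,\Lambda)$ is already supplied by Corollary~\ref{coro:compactness}.
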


\begin{proof}
The conclusion follows immediately from Corollary~\ref{coro:compactness} and Proposition~\ref{prop:gamma-equivalence}.
\end{proof}

We can now prove the first implication in Theorem~\ref{thm:Gammas}. The proof is similar to that of~\cite[Proposition~4.3]{CCM25} in the linear case and relies on the strategies developed in~\cite{CuKrSc23,KrSc22}.

\begin{proposition}\label{prop:gamma1}
Let $\Omega\subset\R^n$ be a bounded open set. Let $A_0\in \M_{\R^n}^{\rm grad}(p,\lambda,\Lambda)$ satisfy~\eqref{eq:A0} and $\varphi_h\in\Po_\Omega (p,\lambda,\Lambda,A_0)$ for $h\in \N\cup\{\infty\}$. Let $F_h$ and $F_h^s$ for $h\in\N\cup\{\infty\}$ be the functionals defined in~\eqref{eq:Fh} and~\eqref{eq:Fhs}, respectively. Assume that 
\[
\text{$F_h$ $\Gamma$-converges to $F_\infty$ strongly in $L^p(\Omega)$ as $h\to\infty$}.
\]
Then
\[
\text{$F_h^s$ $\Gamma$-converges to $F_\infty^s$ strongly in $L^p(\R^n)$ as $h\to\infty$}.
\]
\end{proposition}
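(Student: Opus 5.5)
We prove the liminf and limsup inequalities for $F^s_h$ separately, transferring local information to the nonlocal setting through Proposition~\ref{prop:equiv} and Corollary~\ref{coro:equiv}. A preliminary reduction comes first. By Proposition~\ref{prop:gamma-equivalence}, the assumption that $F_h$ $\Gamma$-converges to $F_\infty$ upgrades to the $\Gamma$-convergence of the localised functionals $G_h(\,\cdot\,,U)$ to $G_\infty(\,\cdot\,,U)$ for every $U\in\mathscr A(\Omega)$. These functionals carry no boundary condition, which is exactly what we need: the functions $w_h$ produced by Corollary~\ref{coro:equiv} satisfy $\nabla w_h=\nabla^s u_h$ in $\Omega$, but they have no trace constraint on $\partial\Omega$. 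Outside $\Omega$ every density equals $\varphi_{A_0}$, so no homogenisation takes place there.

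\emph{Liminf inequality.} Let $u_h\to u$ in $L^p(\R^n)$ and assume, without loss of generality, that $\liminf F^s_h(u_h)<\infty$. By (NP2) and Proposition~\ref{prop:poincare}, the sequence $(u_h)$ is bounded in $H^{s,p}_0(\Omega)$. Passing to a subsequence, $u_h\to u$ weakly in $H^{s,p}_0(\Omega)$, and hence $u\in H^{s,p}_0(\Omega)$. We then split the energy as
\[
F^s_h(u_h)=\int_\Omega\varphi_h(x,\nabla^s u_h)\,\d x+\int_{\R^n\setminus\Omega}\varphi_{A_0}(x,\nabla^s u_h)\,\d x .
\]
The exterior term is handled by convexity: the functional $v\mapsto\int\varphi_{A_0}(x,\nabla^s v)$ is weakly lower semicontinuous, since $\nabla^s u_h\rightharpoonup\nabla^s u$ weakly in $L^p$; in fact Proposition~\ref{prop:strong-gradient} even gives strong convergence locally outside $\overline\Omega$. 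For the interior term, Corollary~\ref{coro:equiv} provides $w_h\to w$ strongly in $L^p(\Omega)$ with $\nabla w_h=\nabla^s u_h$ and $\nabla w=\nabla^s u$ in $\Omega$. The $\Gamma$-liminf for $G_h(\,\cdot\,,\Omega)$ then yields $\liminf G_h(w_h,\Omega)\ge G_\infty(w,\Omega)$. Adding the two pieces, and using $\liminf(a_h+b_h)\ge\liminf a_h+\liminf b_h$, we obtain the liminf inequality.

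\emph{Limsup inequality.} By density of $C_c^\infty(\Omega)$ in $H^{s,p}_0(\Omega)$, continuity of $F^s_\infty$ there (from (NP3)), and the lower semicontinuity of the $\Gamma$-limsup, it suffices to treat $u\in C_c^\infty(\Omega)$. Following \cite{CuKrSc23,KrSc22}, we take $v\in W^{1,p}_{\rm loc}$ with $\nabla v=\nabla^s u$. By the $\Gamma$-convergence of $G_h(\,\cdot\,,U)$ on open sets, combined with the fundamental estimate or the $\Gamma$-convergence with Dirichlet data (\cite[Theorem~21.1]{DalMaso}), we obtain a recovery sequence $v_h\to v$ in $L^p(U)$ with $v_h-v\in W^{1,p}_0(U)$ for some $U\subset\subset\Omega$ containing the support of a cutoff, and with $G_h(v_h,U)\to G_\infty(v,U)$. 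We then set
\[
u_h\coloneqq u+(-\Delta)^{\frac{1-s}{2}}(v_h-v),
\]
extending $v_h-v$ by zero. By Proposition~\ref{prop:equiv}(ii) this gives $\nabla^s u_h=\nabla^s u+\nabla(v_h-v)$, which equals $\nabla v_h$ in $U$ and $\nabla^s u$ outside $U$. Estimate~\eqref{eq:frac-lap-est} gives $u_h\to u$ in $L^p$, because $v_h-v\to0$ in $L^p$ while remaining bounded in $W^{1,p}$.

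The main obstacle is that $u_h$ need not vanish outside $\Omega$, since $(-\Delta)^{\frac{1-s}{2}}$ destroys compact support. So $u_h\notin H^{s,p}_0(\Omega)$ in general. We plan to fix this with a cutoff $\chi\in C_c^\infty(\Omega)$ equal to $1$ on $U$ and on the support of $u$, and to work with $\chi u_h$ instead. By Proposition~\ref{prop:Leibniz},
\[
\nabla^s(\chi u_h)=\chi\nabla^s u_h+u_h\nabla^s\chi+\nabla^s_{\rm NL}(\chi,u_h).
\]
Since $u_h-u\to0$ strongly in $L^p(\R^n)$ and $\chi u=u$, the error terms $(u_h-u)\nabla^s\chi+\nabla^s_{\rm NL}(\chi,u_h-u)$ tend to $0$ strongly in $L^p$ by~\eqref{eq:nabla_NL}. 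On the other hand, $(1-\chi)\nabla^s u_h=(1-\chi)\nabla^s u$ holds exactly outside $U$. Hence $\nabla^s(\chi u_h)=\nabla^s u_h+o(1)$ in $L^p$. Combining this with the local Lipschitz bound (NP3) and the equi-integrability of $|\nabla v_h|^p$ (which we may need to arrange, e.g.\ by a De Giorgi-type modification of the recovery sequence), we conclude
\[
\limsup_h F^s_h(\chi u_h)\le G_\infty(v,U)+\int_{\R^n\setminus U}\varphi_\infty(x,\nabla^s u)\,\d x=F^s_\infty(u).
\]
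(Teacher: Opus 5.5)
Your liminf step coincides with the paper's. The architecture of your limsup construction is also the paper's: a recovery sequence for $v$ with $\nabla v=\nabla^s u$, the corrector $(-\Delta)^{\frac{1-s}{2}}(v_h-v)$, and a cutoff whose Leibniz remainders tend to $0$ in $L^p$.

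The gap is the last displayed inequality. On $\Omega\setminus U$ you correctly have $\nabla^s(\chi u_h)=\nabla^s u+o(1)$ in $L^p$. However, the integrand there is still $\varphi_h$, so this part of the energy equals $\int_{\Omega\setminus U}\varphi_h(x,\nabla^s u)\,\d x+o(1)$. Nothing makes it converge to $\int_{\Omega\setminus U}\varphi_\infty(x,\nabla^s u)\,\d x$. In homogenisation the $\Gamma$-limit density generally lies strictly below the weak limits of $\varphi_h(\,\cdot\,,\xi)$. For laminates with $p=2$, the weak limit produces the arithmetic mean of the coefficients, whereas the homogenised matrix involves the harmonic mean in the lamination direction. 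Your construction forces $U\subset\subset\Omega$, because the cutoff must lie in $C_c^\infty(\Omega)$ and equal $1$ on the support of $v_h-v$. Hence $\Omega\setminus U$ has positive measure, and for fixed $U$ the bound $\limsup_h F^s_h(\chi u_h)\le F^s_\infty(u)$ fails in general.

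What is missing is the $\varepsilon$-device that the paper implements through the compact set $K_\varepsilon$:
\begin{itemize}
\item[(1)] choose $U=U_\varepsilon$ with $\int_{\Omega\setminus U_\varepsilon}(1+|\nabla^s u|^p)\,\d x<\varepsilon$;
\item[(2)] use the upper bound in (NP2), which is uniform in $h$, to get $\limsup_h\int_{\Omega\setminus U_\varepsilon}\varphi_h(x,\nabla^s(\chi_\varepsilon u_h^\varepsilon))\,\d x\le 2^{p-1}a_1\varepsilon$;
\item[(3)] bound $G_\infty(v,U_\varepsilon)\le\int_\Omega\varphi_\infty(x,\nabla^s u)\,\d x$ by nonnegativity;
\item[(4)] conclude $\limsup_h F^s_h(\chi_\varepsilon u^\varepsilon_h)\le F^s_\infty(u)+C\varepsilon$, and finish with a diagonal argument in $\varepsilon$.
\end{itemize}

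Two minor points. First, equi-integrability of $|\nabla v_h|^p$ is unnecessary. By (NP3) and Hölder's inequality,
\[
\int_{\R^n}|\varphi_h(x,a_h)-\varphi_h(x,b_h)|\,\d x\le a_2\|\mathbf{1}_\Omega+|a_h|+|b_h|\|_{L^p(\R^n)}^{p-1}\|a_h-b_h\|_{L^p(\R^n)},
\]
which vanishes as soon as $a_h,b_h$ are bounded in $L^p$ and $a_h-b_h\to0$ in $L^p$. Second, the reduction to $u\in C_c^\infty(\Omega)$ can be skipped by cutting off only the corrector, i.e.\ using $u+\chi(u_h-u)$, as the paper does with $u_\infty+\chi^\varepsilon w_h^\varepsilon$.
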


\begin{proof}
For the sake of exposition, we divide our proof in two parts.

{\bf $\Gamma$-liminf inequality}. Let $u_h\in L^p(\R^n)$ for $h\in\N\cup\{\infty\}$, be such that $u_h\to u_\infty$ strongly in $L^p(\R^n)$ as $h\to\infty$. We claim that
\begin{equation}\label{eq:Fh-claim}
F_\infty^s (u_\infty)\le\liminf_{h\to\infty}F_h^s (u_h).
\end{equation}
Without loss of generality, we may assume that
\[
\liminf_{h\to\infty}F_h^s (u_h)=\lim_{h\to\infty}F_h^s (u_h)<\infty.
\]
Therefore, property (NP2) in Proposition~\ref{prop:density-nonlocal} implies that 
\[
\sup_{h\in\mathbb{N}} \|\nabla^s u_h\|_{L^p(\mathbb{R}^n)} < \infty.
\]
Together with Proposition~\ref{prop:poincare}, this yields that $(u_h)_h$ is uniformly bounded in $H^{s,p}_0(\Omega)$. 
Hence, $u_\infty\in H^{s,p}_0(\Omega)$ and
\begin{align}\label{eq:weak-u}
u_h\to u_\infty\quad\text{weakly in $H^{s,p}_0(\Omega)$ as $h\to\infty$}.
\end{align}
By Corollary~\ref{coro:equiv}, up to a not relabeled subsequence, there exist $w_h\in W^{1,p}(\Omega)$ for $h\in\N\cup\{\infty\}$ such that
\begin{equation}\label{eq:local_nonlocal_gradients}
\nabla w_h=\nabla^s u_h\quad\text{in $\Omega$ for every $h\in\N\cup\{\infty\}$},
\end{equation}
and
\[
w_h\to w_\infty\quad\text{weakly in $W^{1,p}(\Omega)$ and strongly in $L^p(\Omega)$ as $h\to\infty$}.
\]
On the one hand, by \eqref{eq:local_nonlocal_gradients} and Proposition~\ref{prop:gamma-equivalence} we obtain
\begin{align*}
\int_\Omega \varphi_{A_\infty}(x,\nabla^s u_\infty(x))\, \d x=G_\infty(w_\infty,\Omega) \le \liminf_{h\to\infty} G_h (w_h,\Omega)=\liminf_{h\to\infty}\int_\Omega \varphi_{A_h}(x,\nabla^s u_h(x))\, \d x,
\end{align*}
where $G_h\colon L^p(\Omega)\times\mathscr A(\Omega)\to [0,\infty]$ is the functional defined in~\eqref{eq:GhU} for $h\in\N\cup\{\infty\}$. On the other hand, since $\varphi_h\in \Po_\Omega(p,\lambda,\Lambda,A_0)$, by~\eqref{eq:weak-u} and (NP1) of Proposition~\ref{prop:density-nonlocal}, we get
\begin{align*}
\int_{\R^n\setminus\Omega} \varphi_{A_0}(x,\nabla^s u_\infty(x))\, \d x&\le \liminf_{h\to\infty}\int_{\R^n\setminus \Omega} \varphi_{A_0}(x,\nabla^s u_h(x))\, \d x\\
&=\liminf_{h\to\infty}\int_{\R^n\setminus \Omega} \varphi_h(x,\nabla^s u_h(x))\, \d x.
\end{align*}
Combining the above inequalities, we conclude that
\begin{align*}
F_\infty^s (u_\infty)& =\int_\Omega \varphi_{A_\infty}(x,\nabla^s u_\infty(x))\, \d x + \int_{\R^n\setminus\Omega} \varphi_{A_0}(x,\nabla^s u_\infty(x))\, \d x \\
& \le \liminf_{h\to\infty}\int_\Omega \varphi_{A_h}(x,\nabla^s u_h(x))\, \d x+\liminf_{h\to\infty}\int_{\R^n\setminus \Omega} \varphi_{A_h}(x,\nabla^s u_h(x))\, \d x\\
& \le \liminf_{h\to\infty} F_h^s (u_h),
\end{align*}
which proves~\eqref{eq:Fh-claim}.

{\bf $\Gamma$-limsup inequality}.  
Let $u_\infty\in L^p(\R^n)$ be fixed.
We want to show the existence of a sequence $(u_h)_h\subset L^p(\R^n)$ such that $u_h\to u_\infty$ strongly in $L^p(\R^n)$ as $h\to\infty$, and
\begin{equation}\label{eq:Fh-Gammalimsup}
F_\infty^s (u_\infty) \ge \limsup_{h\to\infty} F_h^s (u_h).
\end{equation}
It suffices to consider the case $u_\infty \in H^{s,p}_0(\Omega)$,~\eqref{eq:Fh-Gammalimsup} being otherwise trivial in virtue of~\eqref{eq:Fhs}.
By Proposition~\ref{prop:equiv}(i), there exists $v_\infty\in W^{1,p}_{\rm loc}(\R^n)$ such that
\begin{equation}\label{eq:nablav}
\nabla v_\infty=\nabla^s u_\infty\quad\text{in $\R^n$}.
\end{equation}
By Proposition~\ref{prop:gamma-equivalence} and, in particular, by the $\Gamma$-limsup inequality for $(G_h(\cdot,\Omega))_h$, there exists a recovery sequence $(v_h)_h\subset W^{1,p}(\Omega)$ strongly converging to $v_\infty$ in $L^p(\Omega)$ as $h\to\infty$ such that
\begin{equation}\label{eq:new-recovery}
\lim_{h\to\infty} G_h (v_h,\Omega) = G_\infty (v_\infty,\Omega).
\end{equation}
Thus, by the definition of $G_h$ in \eqref{eq:GhU} and the lower bound in (P2) of Proposition~\ref{prop:density-loc}, the sequence $(v_h)_h$ is bounded in $W^{1,p}(\Omega)$. Hence
\[
v_h\to v_\infty\quad\text{weakly in $W^{1,p}(\Omega)$ as $h\to\infty$}.
\]
Let $\varepsilon\in (0,1)$ be fixed and let $K_\varepsilon\subset \Omega$ be a compact set such that 
\begin{equation}\label{eq:Ke}
\int_{\Omega\setminus K_\varepsilon}(1+|\nabla v_\infty(x)|^p)\,\d x<\varepsilon.
\end{equation}
Let $U_\varepsilon',U_\varepsilon''$ be open sets with $K_\varepsilon\subset\subset U_\varepsilon'\subset\subset U_\varepsilon''\subset\subset \Omega$ and let $V_\varepsilon\coloneqq \Omega\setminus K_\varepsilon$. 
By the fundamental estimates (see~\cite[Definition 18.2 and Theorem~19.6]{DalMaso}), for every $h\in\N$ there exist a function $\varphi_h^\varepsilon\in C_c^1(U_\varepsilon'')$ with $0\le \varphi_h^\varepsilon\le 1$ in $U_\varepsilon''$ and $\varphi_h^\varepsilon=1$ on $\overline{U_\varepsilon'}$, and a constant $M_\varepsilon>0$ such that
\begin{equation*}
\begin{aligned}
G_h (\varphi_h^\varepsilon v_h+(1-\varphi_h^\varepsilon)v_\infty, U_\varepsilon'\cup V_\varepsilon) & \le (1+\varepsilon) G_h(v_h,U_\varepsilon'') + G_h(v_\infty,V_\varepsilon)	\\
&\quad+\varepsilon(\|v_h\|_{L^p((U_\varepsilon''\setminus U_\varepsilon')\cap V_\varepsilon)}^p+\|v_\infty\|_{L^p((U_\varepsilon''\setminus U_\varepsilon')\cap V_\varepsilon)}^p+1)\\
&\quad+M_\varepsilon\|v_h-v_\infty\|_{L^p((U_\varepsilon''\setminus U_\varepsilon')\cap V_\varepsilon)}.
\end{aligned}
\end{equation*}
For every $h\in\N$, define
\begin{equation}\label{eq:vhe}
v_h^\varepsilon\coloneqq \varphi_h^\varepsilon v_h+(1-\varphi_h^\varepsilon)v_\infty\in W^{1,p}(\Omega).
\end{equation}
Since $U_\varepsilon'\cup V_\varepsilon=\Omega$, we have
\begin{align*}
G_h(v_h^\varepsilon,\Omega)&\le (1+\varepsilon)G_h(v_h,\Omega)+G_h(v_\infty,\Omega\setminus K_\varepsilon)\\
&\quad+\varepsilon(\|v_h\|_{L^p(\Omega)}^p+\|v_\infty\|_{L^p(\Omega)}^p+1)+M_\varepsilon\|v_h-v_\infty\|_{L^p(\Omega)}.
\end{align*}
Thus, by~\eqref{eq:new-recovery},~\eqref{eq:Ke}, and (P2) of Proposition~\ref{prop:density-loc} we derive
\begin{equation}\label{eq:fund-est}
\limsup_{h\to\infty} G_h(v_h^\varepsilon,\Omega)\le (1+\varepsilon) G_\infty (v_\infty,\Omega)+\varepsilon (a_1+2\|v_\infty\|_{L^p(\Omega)}^p+1).
\end{equation}
In particular, by applying again (P2), from~\eqref{eq:fund-est} we deduce
\begin{equation}\label{eq:vheps-est}
\sup_{h\in\N}\|\nabla v_h^\varepsilon\|_{L^p(\Omega)}<\infty.
\end{equation}
Moreover, by \eqref{eq:vhe}
\[
\|v_h^\varepsilon -v_\infty\|_{L^p(\Omega)}=\|\varphi_h^\varepsilon(v_h-v_\infty)\|_{L^p(\Omega)}\le \|v_h-v_\infty\|_{L^p(\Omega)}\to 0\quad\text{as $h\to\infty$}.
\]
Thus 
\begin{equation}\label{eq:vheps-v}
v_h^\varepsilon\to v_\infty\quad\text{strongly in $L^p(\Omega)$ as $h\to\infty$}.
\end{equation}
Let us trivially extend $v_h^\varepsilon-v_\infty\in W^{1,p}_0(\Omega)$ to a function in $W^{1,p}(\R^n)$ and define
\[
w_h^\varepsilon\coloneqq(-\Delta)^{\frac{1-s}{2}}(v_h^\varepsilon-v_\infty)\quad\text{for every $h\in\N$}.
\]
By Proposition~\ref{prop:equiv}(ii), for every $h\in\N$ we have
\begin{equation}\label{eq:nablaswh}
w_h^\varepsilon \in H^{s,p}(\R^n),\qquad\nabla^s w_h^\varepsilon=\nabla (v_h^\varepsilon-v_\infty)\quad\text{in $\R^n$}. 
\end{equation}
Moreover, by~\eqref{eq:frac-lap-est}, there exists a constant $C>0$, depending only on $n$ and $s$, such that
\[
\|w_h^\varepsilon\|_{L^p(\R^n)}\le C\|v_h^\varepsilon-v_\infty\|_{L^p(\Omega)}^s\|\nabla v_h^\varepsilon-\nabla v_\infty\|_{L^p(\Omega)}^{1-s}.
\]
Together with~\eqref{eq:vheps-est} and~\eqref{eq:vheps-v}, the above inequality implies that 
\begin{equation}\label{eq:wh-strong}
w_h^\varepsilon\to 0\quad\text{strongly in $L^p(\R^n)$ as $h\to\infty$}.
\end{equation}
Moreover, \eqref{eq:vheps-est} and \eqref{eq:nablaswh} imply
\begin{equation}\label{eq:wh-weak}
w_h^\varepsilon\to 0\quad\text{weakly in $H^{s,p}(\R^n)$ as $h\to\infty$}.
\end{equation}
Let $\chi^\varepsilon\in C_c^\infty(\Omega)$ satisfy $0\le \chi^\varepsilon\le 1$ on $\Omega$ and $\chi^\varepsilon=1$ on $\overline{U_\varepsilon''}$.
We define
\[
u_h^\varepsilon\coloneqq u_\infty+\chi^\varepsilon w_h^\varepsilon\in H^{s,p}_0(\Omega)\quad\text{for every $h\in\N$}.
\]
By~\eqref{eq:wh-strong},~\eqref{eq:wh-weak}, and Proposition~\ref{prop:Leibniz} we derive
\begin{equation}\label{eq:uhepsilon1}
u_h^\varepsilon\to u_\infty\quad\text{strongly in $L^p(\R^n)$ and weakly in $H^{s,p}_0(\Omega)$ as $h\to\infty$}.
\end{equation}
We also set
\[
R_h^\varepsilon\coloneqq\nabla^s(\chi^\varepsilon w_h^\varepsilon)-\chi^\varepsilon\nabla^sw_h^\varepsilon\quad\text{for $h\in\N$}.
\]
By~\eqref{eq:nabla_NL} and~\cite[Lemma~2.3]{ComiStefani23}, there exists a positive constant $C$ such that
\begin{align*}
\left\|R_h^\varepsilon\right\|_{L^p(\R^n)}\le C\left\|\chi^\varepsilon\right\|_{W^{1,\infty}(\R^n)}\left\|w_h^\varepsilon\right\|_{L^p(\R^n)}\quad\text{for every $h\in\N$}.
\end{align*}
Then, by~\eqref{eq:wh-strong},
\begin{align}\label{eq:Rh-conv}
R_h^\varepsilon\to 0\quad\text{strongly in $L^p(\R^n;\R^n)$ as $h\to\infty$}.
\end{align}
Thanks to~\eqref{eq:nablaswh}, we can write
\begin{align}\label{eq:nablauh-for}
\nabla^s u_h^\varepsilon&=\nabla^s u_\infty+\chi^\varepsilon\nabla^sw_h^\varepsilon+R_h^\varepsilon=\nabla^s u_\infty+\chi^\varepsilon\nabla(v_h^\varepsilon-v_\infty)+R_h^\varepsilon\quad\text{in $\R^n$}.
\end{align}
We consider the following decomposition
\begin{align}\label{eq:Fhuh}
F_h^s (u_h^\varepsilon)&=\int_{U_\varepsilon''}\varphi_{A_h}(x,\nabla^s u_h^\varepsilon(x))\, \d x+\int_{\Omega\setminus U_\varepsilon''}\varphi_{A_h}(x,\nabla^s u_h^\varepsilon(x))\, \d x\\
&\quad+\int_{\R^n\setminus\Omega}\varphi_{A_0}(x,\nabla^s u_h^\varepsilon(x))\, \d x\nonumber.
\end{align}
By~\eqref{eq:Rh-conv},~\eqref{eq:nablauh-for}, and (NP1)--(NP2) in Proposition~\ref{prop:density-nonlocal}, the last integral in~\eqref{eq:Fhuh} satisfies
\begin{align}\label{eq:Fh-1}
\lim_{h\to\infty}\int_{\mathbb{R}^n\setminus\Omega}\varphi_{A_0}(x,\nabla^s u_h^\varepsilon(x))\, \d x&=\lim_{h\to\infty}\int_{\mathbb{R}^n\setminus\Omega}\varphi_{A_0}(x,\nabla^s u_\infty(x)+R_h^\varepsilon(x))\, \d x\nonumber\\
&=\int_{\mathbb{R}^n\setminus\Omega}\varphi_{A_0}(x,\nabla^s u_\infty(x))\, \d x.
\end{align}
As for the second integral in~\eqref{eq:Fhuh}, we observe that, by~\eqref{eq:vhe} and~\eqref{eq:nablauh-for},
\[
\nabla^s u_h^\varepsilon = \nabla^s u_\infty + R_h^\varepsilon \quad \text{in } \Omega\setminus U_\varepsilon''.
\]
Therefore, by~\eqref{eq:nablav},~\eqref{eq:Ke},~\eqref{eq:Rh-conv}, and (NP2) in Proposition~\ref{prop:density-nonlocal}, we obtain
\begin{align}\label{eq:Fh-2}
&\limsup_{h\to \infty}\int_{\Omega\setminus U_\varepsilon''}\varphi_{A_h}(x,\nabla^s u_h^\varepsilon(x))\, \d x \nonumber\\
&\le a_1 \lim_{h\to \infty}\int_{\Omega\setminus U_\varepsilon''}\left(1 + 2^{p-1}|\nabla^s u_\infty(x)|^p + 2^{p-1}|R_h^\varepsilon(x)|^p\right)\,\d x \nonumber\\
&= a_1 \int_{\Omega\setminus U_\varepsilon''}\left(1 + 2^{p-1}|\nabla^s u_\infty(x)|^p\right)\,\d x \le a_1 2^{p-1}\varepsilon.
\end{align}
Finally, for the first integral in~\eqref{eq:Fhuh}, we observe that, since $\chi^\varepsilon = 1$ in $U_\varepsilon''$, by~\eqref{eq:nablav} and~\eqref{eq:nablauh-for} we have
\[
\nabla^s u_h^\varepsilon = \nabla v_\infty + \nabla(v_h^\varepsilon - v_\infty) + R_h^\varepsilon = \nabla v_h^\varepsilon + R_h^\varepsilon \quad \text{in } U_\varepsilon''.
\]
Hence, by (NP3) in Proposition~\ref{prop:density-nonlocal} and Hölder's inequality, we obtain
\begin{align*}
&\int_{U_\varepsilon''}|\varphi_{A_h}(x,\nabla^s u_h^\varepsilon(x))-\varphi_{A_h}(x,\nabla v_h^\varepsilon(x))|\,\d x\\
&\le a_2 3^{p-2}(|\Omega|+\|\nabla^s u_h^\varepsilon\|_{L^p(\Omega)}^{p-1}+\|\nabla v_h^\varepsilon\|_{L^p(\Omega)}^{p-1})\|R^\varepsilon_h\|_{L^p(\R^n)}.
\end{align*}
Since \eqref{eq:vheps-est} and \eqref{eq:uhepsilon1} imply that $(v_h^\varepsilon)_h$ and $(u_h^\varepsilon)_h$ are respectively uniformly bounded in $W^{1,p}(\Omega)$ and $H^{s,p}_0(\Omega)$, in view of~\eqref{eq:Rh-conv} we deduce
\begin{align*}
\lim_{h\to \infty}\int_{U_\varepsilon''}|\varphi_{A_h}(x,\nabla^s u_h^\varepsilon(x))-\varphi_{A_h}(x,\nabla v_h^\varepsilon(x))|\,\d x=0.
\end{align*}
Therefore, by~\eqref{eq:fund-est} we get
\begin{align}\label{eq:Fh-3}
\limsup_{h\to\infty}\int_{U_\varepsilon''}\varphi_{A_h}(x,\nabla^s u_h^\varepsilon(x))\, \d x &=\limsup_{h\to\infty}\int_{U_\varepsilon''}\varphi_{A_h}(x,\nabla v_h^\varepsilon(x))\, \d x\le \limsup_{h\to\infty} G_h(v_h^\varepsilon,\Omega)\nonumber\\
&\le (1+\varepsilon) G_\infty (v_\infty,\Omega)+\varepsilon(a_1+2\|v_\infty\|_{L^p(\Omega)}^p+1).
\end{align}
By combining~\eqref{eq:Fhuh},~\eqref{eq:Fh-1},~\eqref{eq:Fh-2} and~\eqref{eq:Fh-3}, for every $\varepsilon\in (0,1)$ we obtain
\begin{equation}\label{eq:uhepsilon2}
\limsup_{h\to\infty}F_h^s (u_h^\varepsilon)\le (1+\varepsilon) F_\infty^s (u_\infty)+\varepsilon(a_1+2\|v_\infty\|_{L^p(\Omega)}^p+1)+a_12^{p-1}\varepsilon.
\end{equation}
To deduce~\eqref{eq:Fh-Gammalimsup} from~\eqref{eq:uhepsilon2}, it suffices to apply a diagonal argument, similarly to that used in~\cite[Proposition~4.3]{CCM25}.
\end{proof}

Let us prove the converse implication.

\begin{proposition}\label{prop:gamma2}
Let $\Omega\subset\R^n$ be a bounded open set. Let $A_0\in \M_{\R^n}^{\rm grad}(p,\lambda,\Lambda)$ satisfy~\eqref{eq:A0} and $\varphi_h\in\Po_\Omega (p,\lambda,\Lambda,A_0)$ for $h\in \N\cup\{\infty\}$. Let $F_h$ and $F_h^s$ for $h\in\N\cup\{\infty\}$ be the functionals defined in~\eqref{eq:Fh} and~\eqref{eq:Fhs}, respectively. Assume that 
\[
\text{$F_h^s$ $\Gamma$-converges to $F_\infty^s$ strongly in $L^p(\R^n)$ as $h\to\infty$}.
\]
Then
\[
\text{$F_h$ $\Gamma$-converges to $F_\infty$ strongly in $L^p(\Omega)$ as $h\to\infty$}.
\]
\end{proposition}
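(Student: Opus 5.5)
We argue by compactness and uniqueness of the integral representation, mirroring Proposition~\ref{prop:nonloc-loc}. Since $\Gamma$-convergence in $L^p(\Omega)$ (a separable metric space) has the Urysohn property~\cite[Proposition~8.3]{DalMaso}, it suffices to show that every subsequence of $(F_h)_h$ has a further subsequence $\Gamma$-converging to $F_\infty$.

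Fix a subsequence, not relabeled. By Corollary~\ref{cor:local_Gamma-compactness}, applied to the restrictions $\varphi_h|_{\Omega\times\R^n}\in\Po_\Omega(p,\lambda,\Lambda)$, there exist a further subsequence $(\varphi_{h_k})_k$ and $\widehat\varphi_\infty\in\Po_\Omega(p,\lambda,\Lambda)$ such that $F_{h_k}$ $\Gamma$-converges to $\widehat F_\infty$ strongly in $L^p(\Omega)$. Here $\widehat F_\infty$ denotes the functional~\eqref{eq:Fh} associated with $\widehat\varphi_\infty$. We extend $\widehat\varphi_\infty$ to $\R^n\times\R^n$ by setting $\widehat\varphi_\infty=\varphi_{A_0}$ outside $\Omega$, so that $\widehat\varphi_\infty\in\Po_\Omega(p,\lambda,\Lambda,A_0)$. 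Proposition~\ref{prop:gamma1} then yields that $F^s_{h_k}$ $\Gamma$-converges to $\widehat F^s_\infty$ strongly in $L^p(\R^n)$. By hypothesis, the same subsequence $F^s_{h_k}$ also $\Gamma$-converges to $F^s_\infty$. Since $\Gamma$-limits are unique, $F^s_\infty=\widehat F^s_\infty$ on $L^p(\R^n)$.

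Next we identify the densities. For every $u\in C_c^\infty(\Omega)\subset H^{s,p}_0(\Omega)$ we have
\[
\int_{\R^n}\varphi_\infty(x,\nabla^s u)\,\d x=\int_{\R^n}\widehat\varphi_\infty(x,\nabla^s u)\,\d x .
\]
On $\R^n\setminus\Omega$ the two integrands coincide with $\varphi_{A_0}(x,\nabla^s u)$, and these integrals are finite by (NP2) because $\nabla^s u\in L^p$. Subtracting them gives
\[
\int_\Omega\varphi_\infty(x,\nabla^s u)\,\d x=\int_\Omega\widehat\varphi_\infty(x,\nabla^s u)\,\d x\quad\text{for every $u\in C_c^\infty(\Omega)$}.
\]
This is exactly hypothesis~\eqref{eq:integral_identity_2} of Proposition~\ref{prop:uniq_rapp} with $\sigma=s$. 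Its assumptions (i)--(ii) hold with $q=p$: (i) is (P1), and (ii) follows from (P3) with $\eta=0$, which gives $|\varphi(x,\xi)|\le a_2(1+|\xi|)^{p-1}|\xi|\le C(|\xi|+|\xi|^p)$. Proposition~\ref{prop:uniq_rapp} therefore gives $\varphi_\infty=\widehat\varphi_\infty$ a.e.\ in $\Omega\times\R^n$, hence $F_\infty=\widehat F_\infty$. So $F_{h_k}$ $\Gamma$-converges to $F_\infty$, and the Urysohn property concludes the proof.

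The main obstacle is the passage from equality of the nonlocal $\Gamma$-limits to equality of the local densities. The nonlocal integrand sees $\nabla^s u$ on all of $\R^n$, so $\nabla^s u$ is not compactly supported in $\Omega$. The point is that the exterior contributions are identical, since both densities equal $\varphi_{A_0}$ there, and finite, so they cancel; the blow-up argument behind Proposition~\ref{prop:uniq_rapp} then handles $\sigma=s<1$ via Proposition~\ref{prop:equiv}(ii).
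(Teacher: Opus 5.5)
Your proposal is correct and follows essentially the same route as the paper. Both arguments use local $\Gamma$-compactness via Corollary~\ref{cor:local_Gamma-compactness}, transfer to the nonlocal setting through Proposition~\ref{prop:gamma1}, uniqueness of $\Gamma$-limits, cancellation of the common finite exterior contribution of $\varphi_{A_0}$, identification of the densities via Proposition~\ref{prop:uniq_rapp} with $\sigma=s$, and the Urysohn property. Your explicit checks of hypotheses (i)--(ii) of Proposition~\ref{prop:uniq_rapp}, with $q=p$ from (P1) and (P3), and of the finiteness of the exterior integrals are details the paper leaves implicit.
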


\begin{proof}
By Corollary~\ref{cor:local_Gamma-compactness}, there exist a subsequence $(\varphi_{h_k})_k\subset \Po_\Omega(p,\lambda,\Lambda)$ and a Borel function $\widehat{\varphi}_\infty\in \Po_\Omega(p,\lambda,\Lambda)$ such that 
\begin{equation}\label{eq:Gamma_tilde_F}
    \text{$F_{h_k}$ $\Gamma$-converges to $\widehat{F}_\infty$ strongly in $L^p(\Omega)$ as $k\to\infty$},
\end{equation}
where $\widehat{F}_\infty\colon L^p(\Omega)\to [0,\infty]$ is the functional associated with $\widehat{\varphi}_\infty$, defined as in~\eqref{eq:Fh}. 
If we consider $\widehat{\varphi}_\infty\in \Po_\Omega(p,\lambda,\Lambda,A_0)$, by applying Proposition~\ref{prop:gamma1} to the subsequence $(F_{h_k}^s)_k$ we get that 
\[
\text{$F_{h_k}^s$ $\Gamma$-converges to $\widehat{F}_\infty^s$ strongly in $L^p(\R^n)$ as $k\to\infty$},
\]
where $\widehat{F}_\infty^s\colon L^p(\R^n)\to [0,\infty]$ is the functional associated with $\widehat{\varphi}_\infty$, defined as in~\eqref{eq:Fhs}. 
By the uniqueness of the $\Gamma$-limit, we obtain that 
\[
\widehat{F}_\infty^s(u) = F_\infty^s(u)\quad\text{for every $u\in L^p(\R^n)$}.
\]
Since both $\varphi_\infty$ and $\widehat{\varphi}_\infty$ coincide with $\varphi_{A_0}$ on $(\mathbb R^n\setminus\Omega)\times\mathbb R^n$, we obtain
\begin{equation*}
\int_\Omega \varphi_\infty(x, \nabla^s u(x))\,\d x
=
\int_\Omega \widehat{\varphi}_\infty(x, \nabla^s u(x))\,\d x
\quad \text{for every } u \in C^\infty_c(\Omega).
\end{equation*}
Hence, by Proposition~\ref{prop:uniq_rapp},
\[
\varphi_\infty(x,\xi) = \widehat{\varphi}_\infty(x,\xi) \quad \text{for a.e.\ $x \in \R^n$ and every $\xi \in \R^n$},
\] 
and by \eqref{eq:Gamma_tilde_F}, we get
\begin{equation*}
    \text{$F_{h_k}$ $\Gamma$-converges to $F_\infty$ strongly in $L^p(\Omega)$ as $k\to\infty$}.
\end{equation*}
The conclusion then follows from the Urysohn property of $\Gamma$-convergence and the arbitrariness of the subsequence.
\end{proof}

We can finally prove Theorem~\ref{thm:Gammas}.

\begin{proof}[Proof of Theorem~\ref{thm:Gammas}]
It is enough to combine Proposition~\ref{prop:gamma1} and Proposition~\ref{prop:gamma2}.
\end{proof}

We conclude this section with the following corollary, which is an immediate consequence of Theorem~\ref{thm:Gammas} and Corollary~\ref{cor:local_Gamma-compactness}.

\begin{corollary}\label{cor:Gamma-compactness}
Let $\Omega\subset\R^n$ be a bounded open set and let $A_0\in \M_{\R^n}^{\rm grad}(p,\lambda,\Lambda)$ satisfy~\eqref{eq:A0}. Let $\varphi_h\in\Po_\Omega (p,\lambda,\Lambda,A_0)$ for $h\in \N$. Then, there exist a subsequence $(\varphi_{h_k})_k\subset\Po_\Omega (p,\lambda,\Lambda,A_0)$ and a Borel function $\varphi_\infty\in \Po_\Omega (p,\lambda,\Lambda,A_0)$ such that
\begin{equation}\label{eq:Gamma-nonloc-comp}
\text{$F_{h_k}^s$ $\Gamma$-converges to $F_\infty^s$ strongly in $L^p(\R^n)$ as $k\to\infty$},
\end{equation}
where, for every $h\in\N\cup\{\infty\}$, $F_h^s$ are the functionals defined in~\eqref{eq:Fhs}.
\end{corollary}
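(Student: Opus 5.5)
The plan mirrors the proof of Corollary~\ref{cor:nonlocal_H-compactness}: transfer the local compactness result to the nonlocal setting via the equivalence of Theorem~\ref{thm:Gammas}. Given $\varphi_h\in\Po_\Omega(p,\lambda,\Lambda,A_0)$ for $h\in\N$, I consider their restrictions to $\Omega\times\R^n$, which belong to $\Po_\Omega(p,\lambda,\Lambda)$ by the convention fixed at the beginning of Section~\ref{sec:Gamm-equiv}. Indeed, if $\varphi_h=\varphi_{A_h}$ with $A_h\in\M^{\rm grad}_\Omega(p,\lambda,\Lambda,A_0)$, then $A_h|_{\Omega\times\R^n}\in\M^{\rm grad}_\Omega(p,\lambda,\Lambda)$. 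The potential $\varphi_{A_h}$ is defined pointwise in $x$ through~\eqref{eq:fA}, so its restriction coincides with the potential of the restricted map.

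Corollary~\ref{cor:local_Gamma-compactness} then gives a subsequence $(\varphi_{h_k})_k$ and a Borel $\varphi_\infty\in\Po_\Omega(p,\lambda,\Lambda)$ such that the local functionals $F_{h_k}$ $\Gamma$-converge to $F_\infty$ strongly in $L^p(\Omega)$. I would extend $\varphi_\infty$ to $\R^n\times\R^n$ by setting $\varphi_\infty=\varphi_{A_0}$ on $(\R^n\setminus\Omega)\times\R^n$.

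The only point requiring a check is that this extension lies in $\Po_\Omega(p,\lambda,\Lambda,A_0)$. Write $\varphi_\infty=\varphi_{A_\infty}$ with $A_\infty\in\M^{\rm grad}_\Omega(p,\lambda,\Lambda)$, and extend $A_\infty$ by $A_0$ outside $\Omega$. The glued map satisfies (M1)--(M3) a.e.\ in $\R^n$, since $A_0\in\M^{\rm grad}_{\R^n}(p,\lambda,\Lambda)$. It is conservative for a.e.\ $x$ and Carathéodory, as a gluing along the measurable set $\Omega$. Its potential is the glued potential, which is Borel because both pieces are Borel and $\Omega$ is open. Hence the glued map belongs to $\M^{\rm grad}_\Omega(p,\lambda,\Lambda,A_0)$, and the extended $\varphi_\infty$ belongs to $\Po_\Omega(p,\lambda,\Lambda,A_0)$.

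Finally, apply Theorem~\ref{thm:Gammas} (or directly Proposition~\ref{prop:gamma1}) to the sequence $(\varphi_{h_k})_{k\in\N\cup\{\infty\}}\subset\Po_\Omega(p,\lambda,\Lambda,A_0)$. Since $F_{h_k}$ $\Gamma$-converges to $F_\infty$, it follows that $F^s_{h_k}$ $\Gamma$-converges to $F^s_\infty$ strongly in $L^p(\R^n)$, which is~\eqref{eq:Gamma-nonloc-comp}. No real obstacle is expected, since the substantive work sits in Theorem~\ref{thm:Gammas} and Corollary~\ref{cor:local_Gamma-compactness}. The only delicate step is the membership of the glued limit in the prescribed class, which reduces to measurability bookkeeping.
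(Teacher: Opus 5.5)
Your proposal is correct and follows the paper's proof. Like the paper, you extract a locally $\Gamma$-convergent subsequence via Corollary~\ref{cor:local_Gamma-compactness}, extend the limit potential by $\varphi_{A_0}$ outside $\Omega$, and apply Theorem~\ref{thm:Gammas} (only its local-to-nonlocal direction, Proposition~\ref{prop:gamma1}, is needed); your check that the glued limit lies in $\Po_\Omega(p,\lambda,\Lambda,A_0)$ simply makes explicit what the paper leaves implicit in ``extending $\varphi_\infty$ as an element of $\Po_\Omega(p,\lambda,\Lambda,A_0)$''.
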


\begin{proof}
By Corollary~\ref{cor:local_Gamma-compactness}, there exist a subsequence $(\varphi_{h_k})_k\subset\Po_\Omega (p,\lambda,\Lambda)$ and a function $\varphi_\infty\in \Po_\Omega (p,\lambda,\Lambda)$ such that
\[
\text{$F_{h_k}$ $\Gamma$-converges to $F_\infty$ strongly in $L^p(\Omega)$ as $k\to\infty$},
\]
where $F_h$ for $h\in\N\cup\{\infty\}$ are the functionals defined in~\eqref{eq:Fh}.
Hence, by extending $\varphi_\infty$ as an element of $\Po_\Omega(p,\lambda,\Lambda,A_0)$, Theorem~\ref{thm:Gammas} yields~\eqref{eq:Gamma-nonloc-comp}.
\end{proof}

\section{Equivalence between \texorpdfstring{$H$}{H}-convergence and \texorpdfstring{$\Gamma$}{Gamma}-convergence}\label{sect:equivGammaH}

In this final section, we prove the equivalence between $H$-convergence and $\Gamma$-convergence. More precisely, we have the following result.

\begin{theorem}\label{thm:Hs-Gammas}
Let $\Omega\subset\R^n$ be a bounded open set satisfying $|\partial\Omega|=0$ and let $A_0\in \M_{\R^n}^{\rm grad}(p,\lambda,\Lambda)$ satisfy~\eqref{eq:A0}. Let $A_h\in \M^{\rm grad}_\Omega (p,\lambda,\Lambda,A_0)$ for $h\in\N\cup\{\infty\}$ and let $\varphi_h\coloneqq \varphi_{A_h}\in\Po_\Omega(p,\lambda,\Lambda,A_0)$ for $h\in \N\cup\{\infty\}$ be the corresponding potential defined as in~\eqref{eq:varphiA}. Then 
\[
\text{$\mathcal A_h^s$ $H$-converges to $\mathcal A_\infty^s$ as $h\to\infty$},
\]
if and only if
\[
\text{$F_h^s$ $\Gamma$-converges to $F_\infty^s$ strongly in $L^p(\mathbb{R}^n)$ as $h\to\infty$}.
\]
\end{theorem}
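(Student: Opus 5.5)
The strategy is to go around the commutative diagram of the introduction. I will reduce both nonlocal statements to their local counterparts and then use a local $H$–$\Gamma$ equivalence for conservative maps, which the introduction refers to as Proposition~\ref{prop:H-Gamma} and attributes to~\cite{ADMZ}. The chain is
\[
\mathcal A_h^s \xrightarrow{H} \mathcal A_\infty^s
\iff \mathcal A_h \xrightarrow{H} \mathcal A_\infty
\iff F_h \xrightarrow{\Gamma} F_\infty
\iff F_h^s \xrightarrow{\Gamma} F_\infty^s.
\]
The first equivalence is Theorem~\ref{thm:Hs}. It applies because $\M^{\rm grad}_\Omega(p,\lambda,\Lambda,A_0)\subset\M_\Omega(p,\lambda,\Lambda,A_0)$ and $|\partial\Omega|=0$. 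The last equivalence is Theorem~\ref{thm:Gammas}, applied to $\varphi_h=\varphi_{A_h}\in\Po_\Omega(p,\lambda,\Lambda,A_0)$. So everything reduces to the middle, purely local step for the restrictions $A_h\in\M^{\rm grad}_\Omega(p,\lambda,\Lambda)$.

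For the local step I will argue directly.

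\emph{From $\Gamma$ to $H$.} Fix $f\in W^{-1,p'}(\Omega)$. Since the linear term is a continuous perturbation, $F_h^f\coloneqq F_h-\langle f,\cdot\rangle$ $\Gamma$-converges to $F_\infty^f$. These functionals are equi-coercive by (P2) and Poincaré's inequality, and strictly convex. Hence the minimisers converge, $u_h^f=\mathcal B_h(f)\to u_\infty^f=\mathcal B_\infty(f)$ weakly in $W^{1,p}_0(\Omega)$, and the energies converge as well. This is exactly the argument already carried out in the proof of Corollary~\ref{coro:compactness}. To upgrade this to convergence of the momenta I want convergence of $G_h(u_h^f,\Omega)$, hence $\Gamma$-convergence of the localized functionals $G_h(\cdot,U)$. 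Proposition~\ref{prop:gamma-equivalence} provides this. Then~\cite[Theorem~4.5]{ADMZ} yields $A_h(\cdot,\nabla u_h^f)\rightharpoonup A_\infty(\cdot,\nabla u_\infty^f)$ weakly in $L^{p'}$, which gives (H1)–(H2).

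\emph{From $H$ to $\Gamma$.} I use compactness and uniqueness. Given any subsequence, Corollary~\ref{cor:local_Gamma-compactness} extracts a further subsequence along which $F_{h_k}\xrightarrow{\Gamma}\widehat F_\infty$ for some $\widehat\varphi_\infty=\varphi_{\widehat A_\infty}\in\Po_\Omega(p,\lambda,\Lambda)$. By the previous implication, $\mathcal A_{h_k}$ then $H$-converges to $\widehat{\mathcal A}_\infty$. Uniqueness of local $H$-limits (\cite[Proposition~2.9]{DASC}, cf. Remark~\ref{rem:uniqueness}) gives $\widehat A_\infty=A_\infty$, and therefore $\widehat\varphi_\infty=\varphi_{A_\infty}=\varphi_\infty$ by~\eqref{eq:fA}. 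The Urysohn property of $\Gamma$-convergence then gives convergence of the whole sequence.

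The main obstacle I expect is the upgrade from convergence of minimisers and energies to weak convergence of the momenta $A_h(\cdot,\nabla u_h^f)$ in the $\Gamma\Rightarrow H$ step. This is where the nonlinear, merely $C^{1,\alpha}$-type structure (P4) matters. I would rely entirely on~\cite[Theorem~4.5]{ADMZ}, checking that its hypotheses (P1)–(P4), with uniform constants, hold by Proposition~\ref{prop:density-loc}. A smaller subtlety is that $\varphi_\infty$ must be identified pointwise, not just up to functionals. This is where uniqueness of $H$-limits is used instead of Proposition~\ref{prop:uniq_rapp}, although the latter could serve equally well on the $\Gamma$ side.
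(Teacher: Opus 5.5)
Your proposal is correct and takes the same approach as the paper: the theorem is obtained by chaining Theorem~\ref{thm:Hs}, Proposition~\ref{prop:H-Gamma} and Theorem~\ref{thm:Gammas}, and your sketch of the local step reproduces the paper's proof of Proposition~\ref{prop:H-Gamma} (convergence of minimisers and energies plus \cite[Theorem~4.5]{ADMZ} for the momenta, then compactness, uniqueness of $H$-limits and the Urysohn property for the converse). Your explicit use of Proposition~\ref{prop:gamma-equivalence} to obtain $\Gamma$-convergence of the localized functionals $G_h(\cdot,U)$ before invoking \cite{ADMZ} is a detail the paper leaves implicit, and it is consistent with the paper's argument.
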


In light of our previous equivalence results obtained in Sections~\ref{sec:H-equiv} and~\ref{sec:Gamm-equiv}, it suffices to prove that such convergences are equivalent in the local framework in order to complete the picture. As a consequence of~\cite[Theorem~4.5]{ADMZ} we can prove the following result.

\begin{proposition}\label{prop:H-Gamma}
Let $\Omega\subset\R^n$ be a bounded open set. Let $A_h\in \M^{\rm grad}_\Omega (p,\lambda,\Lambda)$ for $h\in\N\cup\{\infty\}$ and let $\varphi_h\coloneqq \varphi_{A_h}\in\Po_\Omega(p,\lambda,\Lambda)$ for $h\in \N\cup\{\infty\}$ be the corresponding potential defined as in~\eqref{eq:varphiA}. Then 
\begin{equation}\label{eq:H-Gamma}
\text{$\mathcal A_h$ $H$-converges to $\mathcal A_\infty$ as $h\to\infty$},
\end{equation}
if and only if
\begin{equation}\label{eq:Gamma-H}
\text{$F_h$ $\Gamma$-converges to $F_\infty$ strongly in $L^p(\Omega)$ as $h\to\infty$}.
\end{equation}
\end{proposition}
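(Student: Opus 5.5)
We prove the two implications separately. Both rely on the argument already carried out in the proof of Corollary~\ref{coro:compactness}, together with compactness and uniqueness of limits.

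\textbf{$\Gamma$-convergence implies $H$-convergence.} Assume~\eqref{eq:Gamma-H} and fix $f\in W^{-1,p'}(\Omega)$.

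First, the functionals $F_h$ are equi-uniformly convex by~\eqref{eq:Fh-convexity}, and they are equi-coercive by (P2). Since the linear perturbation $u\mapsto -\langle f,u\rangle$ is continuous along weakly converging sequences in $W^{1,p}_0(\Omega)$, we also get that $F_h^f$ $\Gamma$-converges to $F_\infty^f$. Then~\cite[Corollary~7.24]{DalMaso} gives convergence of minimisers and of minimum values. The minimiser $u_h^f$ of $F_h^f$ is the unique weak solution of the Euler--Lagrange equation, so $u_h^f=\mathcal B_h(f)$. This yields (H1) together with $F_h(\mathcal B_h(f))\to F_\infty(\mathcal B_\infty(f))$.

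Next, we need local $\Gamma$-convergence to invoke~\cite[Theorem~4.5]{ADMZ}. Proposition~\ref{prop:gamma-equivalence} upgrades~\eqref{eq:Gamma-H} to $\Gamma$-convergence of $G_h(\cdot,U)$ for every $U\in\mathscr A(\Omega)$. We then have weak convergence of $\mathcal B_h(f)$ and convergence of the energies $G_h(\mathcal B_h(f),\Omega)\to G_\infty(\mathcal B_\infty(f),\Omega)$. By~\cite[Theorem~4.5]{ADMZ}, exactly as in~\eqref{eq:convergenza_momenti}, this gives $A_h(\cdot,\nabla\mathcal B_h(f))\rightharpoonup A_\infty(\cdot,\nabla\mathcal B_\infty(f))$ weakly in $L^{p'}$, which is (H2).

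\textbf{$H$-convergence implies $\Gamma$-convergence.} Assume~\eqref{eq:H-Gamma}. The plan is a compactness and uniqueness argument.

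Take any subsequence. By Corollary~\ref{cor:local_Gamma-compactness} and Corollary~\ref{coro:compactness}, there is a further subsequence and some $\widehat\varphi_\infty\in\Po_\Omega(p,\lambda,\Lambda)$ with $F_{h_k}\to\widehat F_\infty$ in the sense of $\Gamma$-convergence. Set $\widehat A_\infty=\nabla_\xi\widehat\varphi_\infty$. By the first implication, $\mathcal A_{h_k}$ $H$-converges to $\widehat{\mathcal A}_\infty$.

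Uniqueness of local $H$-limits (\cite[Proposition~2.9]{DASC}, the local analogue of Lemma~\ref{lem:uniqueness}) gives $\widehat A_\infty=A_\infty$ a.e. Both $\widehat\varphi_\infty$ and $\varphi_\infty$ vanish at $\xi=0$ and are potentials of the same map. By Proposition~\ref{prop:potential}, they are given by the same formula~\eqref{eq:fA}, so $\widehat\varphi_\infty=\varphi_\infty$. Hence $F_{h_k}$ $\Gamma$-converges to $F_\infty$. The Urysohn property~\cite[Proposition~8.3]{DalMaso} then gives~\eqref{eq:Gamma-H} for the whole sequence.

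\textbf{Main obstacle.} The delicate step is identifying the weak limit of the momenta in the first implication. Weak convergence of $\mathcal B_h(f)$ alone is not enough; we also need convergence of the energies, and this is precisely what~\cite[Theorem~4.5]{ADMZ} requires. Obtaining the energy convergence from the $\Gamma$-convergence of $F_h$ uses the fact that the minimum values converge and that the linear terms $\langle f,\mathcal B_h(f)\rangle$ converge by (H1). The remainder of the argument is bookkeeping already carried out in Corollary~\ref{coro:compactness}.
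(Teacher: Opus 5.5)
Your proposal is correct and follows the paper's proof essentially step by step. For $\Gamma\Rightarrow H$ you use convergence of minimisers and energies (\cite[Corollary~7.24]{DalMaso}) and then \cite[Theorem~4.5]{ADMZ} for the momenta; for $H\Rightarrow\Gamma$ you use local $\Gamma$-compactness, the first implication, uniqueness of local $H$-limits, identification of the potentials through \eqref{eq:fA}, and the Urysohn property. You also appeal explicitly to Proposition~\ref{prop:gamma-equivalence} to get $\Gamma$-convergence of $G_h(\,\cdot\,,U)$ for every $U\in\mathscr A(\Omega)$ before invoking \cite[Theorem~4.5]{ADMZ}; the paper leaves this implicit, and yours is the right way to justify that step.
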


\begin{proof}

{\bf Step 1.} We prove that~\eqref{eq:Gamma-H} implies~\eqref{eq:H-Gamma}. To this aim, we proceed as in the proof of Corollary~\ref{coro:compactness}. Let us fix $f\in W^{-1,p}(\Omega)$ and set for every $h\in\N\cup\{\infty\}$
\begin{equation*}
F_h^f(u)\coloneqq\begin{cases}
     F_h(u)-\langle f,u\rangle_{W^{-1,p'}(\Omega)\times W^{1,p}_0(\Omega)}&\quad\text{if $u\in W^{1,p}_0(\Omega)$},\\
    \infty&\quad\text{otherwise}.
\end{cases}
\end{equation*}
By~\eqref{eq:Gamma-H} and the lower bound in (P2), we obtain that
\[
\text{$F_h^f$ $\Gamma$-converges to $F_\infty^f$ strongly in $L^p(\Omega)$ as $h\to\infty$}.
\]
Moreover, the functionals $(F_h^f)_h$ are equi-coercive and uniformly convex on $L^p(\Omega)$. Hence, letting $u_h^f$ be the unique minimisers of $F_h^f$ for $h\in\N\cup\{\infty\}$, by~\cite[Corollary~7.24]{DalMaso} we have
\begin{equation}\label{eq:conv_min}
u^f_h\to u^f_\infty\quad\text{weakly in $W^{1,p}_0(\Omega)$ and strongly in $L^p(\Omega)$ as $h\to\infty$}.
\end{equation}
Moreover,
\begin{equation}\label{eq:conv_en}
F_h^f(u_h^f) \to F^f_\infty(u^f_\infty)\quad\text{as $h\to\infty$}.
\end{equation}
Recalling that $\nabla_\xi\varphi_{A_h}=A_h$ for every $h\in\N\cup\{\infty\}$, we pass to the Euler-Lagrange equations of $F_h^f$ to obtain that $u_h^f$ satisfies for every $h\in\N\cup\{\infty\}$
\begin{equation*}
\begin{cases}
-\div(A_h(\,\cdot\,,\nabla u_h^f))=f &\text{in }\Omega, \\
u_h^f=0&\text{on }\partial\Omega.
\end{cases}
\end{equation*}
According to the notation introduced in Section~\ref{sec:H-def}, the convergence~\eqref{eq:conv_min} reads as
\begin{equation*}
    \mathcal B_h(f)\to\mathcal{B}_\infty(f)\quad\text{weakly in $W^{1,p}_0(\Omega)$ as $h\to\infty$},
\end{equation*}
which is (H1) in Definition~\ref{def:H-conv}. 
Therefore, to prove~\eqref{eq:H-Gamma}, it remains to prove the convergence of the momenta, that is (H2) in Definition~\ref{def:H-conv}. 
By~\eqref{eq:conv_min} and~\eqref{eq:conv_en}, we have
\[
G_h(u_h^f,\Omega)\to G_\infty(u^f_\infty,\Omega)\quad\text{as $h\to\infty$},
\]
where, for every $h\in\mathbb{N}\cup\{\infty\}$, $G_h$ are the functionals defined in \eqref{eq:GhU}.
Hence, by~\cite[Theorem~4.5]{ADMZ},
\[
A_h(\,\cdot\,,\nabla u_h^f)\to A_\infty(\,\cdot\,,\nabla u_\infty^f)\quad\text{weakly in $L^{p'}(\Omega;\R^n)$ as $h\to\infty$},
\]
that is, according to the notation introduced in Section~\ref{sec:H-def},
\begin{equation*}
    \mathcal{M}_h(f)\to \mathcal{M}_\infty(f)\quad\text{weakly in $L^{p'}(\Omega;\R^n)$ as $h\to\infty$}.
\end{equation*}
This gives (H2) in Definition~\ref{def:H-conv} and concludes the proof of~\eqref{eq:H-Gamma}.

{\bf Step 2.} Assume~\eqref{eq:H-Gamma}. 
By Corollary~\ref{cor:local_Gamma-compactness}, there exists a subsequence $(\varphi_{h_k})_k\subset \Po_\Omega(p,\lambda,\Lambda)$ and a Borel function $\widehat{\varphi}_\infty\in \Po_\Omega(p,\lambda,\Lambda)$ such that 
\[
\text{$F_{h_k}$ $\Gamma$-converges to $\widehat{F}_\infty$ strongly in $L^p(\Omega)$ as $k\to\infty$},
\]
where $\widehat{F}_\infty\colon L^p(\Omega)\to [0,\infty]$ is the functional associated with $\widehat{\varphi}_\infty$ defined as in~\eqref{eq:Fh}. 
By Step 1, we conclude that 
\[
\text{$\mathcal A_{h_k}$ $H$-converges to $\widehat{\mathcal A}_\infty$ as $k\to\infty$},
\]
where $\widehat{\mathcal A}_\infty$ is the differential operator associated with $\widehat{\varphi}_\infty$ defined as in~\eqref{eq:Ah}. Since the $H$-limit is unique, by~\eqref{eq:H-Gamma} we deduce that 
\[
A_\infty(x,\xi)=\widehat A_\infty(x,\xi)\quad\text{for a.e.\ $x\in\Omega$ and for every $\xi\in\R^n$}.
\]
Since, by Proposition~\ref{prop:density-loc},
\[
\varphi_\infty(x,0)=\widehat\varphi_\infty(x,0)=0\quad\text{for a.e.\ $x\in\Omega$},
\]
the above identity implies that 
\[
\varphi_\infty(x,\xi)=\widehat \varphi_\infty(x,\xi)\quad\text{for a.e.\ $x\in\Omega$ and for every $\xi\in\R^n$},
\]
and the conclusion~\eqref{eq:Gamma-H} follows by the Urysohn property of $\Gamma$-convergence.
\end{proof}

We can finally prove Theorem~\ref{thm:Hs-Gammas}.
\begin{proof}[Proof of Theorem~\ref{thm:Hs-Gammas}]
It suffices to combine Theorem~\ref{thm:Hs}, Theorem~\ref{thm:Gammas}, and Proposition~\ref{prop:H-Gamma}.
\end{proof}


\section*{Declarations}


\smallskip

\noindent{\it \textbf{Conflict of interest.}} On behalf of all authors, the corresponding author states that there is no conflict of interest.

\smallskip

\noindent{\it \textbf{Funding information.}} The authors G.~C.\ Brusca, M.\ Caponi, A.\ Carbotti and F.\ Paronetto are members of GNAMPA - Istituto Nazionale di Alta Matematica (INdAM) - and acknowledge the support of the INdAM - GNAMPA 2026 Project ``Analisi variazionale per operatori locali e nonlocali possibilmente singolari o degeneri'' (CUP: E53C25002010001).

M.\ Caponi has been founded by the European Union - NextGenerationEU under the Italian Ministry of University and Research (MUR) National Centre for HPC, Big Data and Quantum Computing (CN\_00000013 – CUP: E13C22001000006).






This work was initiated while G.~C. Brusca was visiting the Department of Mathematics of the Universitat Politècnica de Catalunya.
This research was partially supported by the Centre de Recerca Matemàtica of Barcelona (CRM), under the International Programme for Research in Groups (IP4RG), entitled: ``A Mathematical Approach to the Homogenization of Nonlocal Composite Materials: $H$-convergence, $G$-convergence, and $\Gamma$-convergence''.

\smallskip

\noindent{\it \textbf{Data availability statement.}} Data sharing not applicable to this article as no datasets were generated or analysed during the current study.



\begin{thebibliography}{99}

\bibitem{ABSS}
{\sc R.~Alicandro, A.~Braides, M.~Solci, and G.~Stefani},
{\em Topological singularities arising from fractional-gradient energies},
Math.\ Ann.\ {\bf 393} (2025), 71--111.

\bibitem{ACFS}
{\sc S.~Almi, M.~Caponi, M.~Friedrich, and F.~Solombrino}, 
{\em Riesz fractional gradient functionals defined on partitions: nonlocal-to-local variational limits}, Preprint (2025), arXiv: \href{https://arxiv.org/abs/2510.04881}{2510.04881}

\bibitem{arroyo}
{\sc A.~Arroyo-Rabasa},
{\em Functional and variational aspects of nonlocal operators associated with linear PDEs},
Nonlinear Anal. {\bf 251} (2025), Paper No.\ 113683


\bibitem{ADMZ}
{\sc N.~Ansini, G.~Dal Maso, and C.~I.~Zeppieri},
{\em New results on $\Gamma$-limits of integral functionals},
Ann.\ Inst.\ H.\ Poincaré C Anal.\ Non Linéaire {\bf 31} (2014), 185--202


\bibitem{BCM20}
{\sc J.\ C.\ Bellido, J.\ Cueto, and C.\ Mora-Corral},
{\em Fractional Piola identity and polyconvexity in fractional spaces},
Ann.\ Inst.\ H.\ Poincaré C Anal.\ Non Linéaire {\bf 37} (2020), 955--981

\bibitem{BCM21}
{\sc J.\ C.\ Bellido, J.\ Cueto, and C.\ Mora-Corral},
{\em $\Gamma$-convergence of polyconvex functionals involving $s$-fractional gradients to their local counterparts},
Calc.\ Var.\ Partial Differential Equations {\bf 60} (2021), Paper No.\ 7

\bibitem{Bra}
{\sc A.~Braides}, 
$\Gamma$-convergence for beginners,
Oxford Lecture Ser.\ Math.\ Appl., 22
Oxford University Press, Oxford, 2002



\bibitem{BDF}
{\sc A.~Braides and A.~Defranceschi},
Homogenization of multiple integrals,
Oxford Lecture Ser.\ Math.\ Appl., 12
The Clarendon Press, Oxford University Press, New York, 1998


\bibitem{BCCS22}
{\sc E.~Bruè, M.~Calzi, G.~E.~Comi, and G.~Stefani},
{\em A distributional approach to fractional Sobolev spaces and fractional variation: asymptotics II},
C.\ R.\ Math.\ Acad.\ Sci.\ Paris {\bf 360} (2022), 589--626


\bibitem{bbcejw}
{\sc A.~Buchinger, K.~Burazin, I.~Crnjac, M.~Erceg, M.~Jolić, and M.~Waurick},
{\em Characterisation of homogenisation for nonlocal diffusion by local topologies}, 
Preprint 2026, arXiv: \href{https://arxiv.org/abs/2601.17579}{2601.17579}

\bibitem{bpw}
{\sc A.~Buchinger, M.~Porfido, and M.~Waurick},
{\em The Free Lunch Theorem~of Homogenisation}, 
Preprint 2026, arXiv: \href{https://arxiv.org/abs/2606.04498}{2606.04498}

\bibitem{Caponi26}
{\sc M.~Caponi},
{\em A Survey on the Div-Curl Lemma~and Some Extensions to Fractional Sobolev Spaces}, 
In: J. Duran, A. Maione (eds) {\em New Frontiers in Homogenization and Fractional Calculus},
Trends in Mathematics series, Springer-Birkhäuser (2026)

\bibitem{CCM25}
{\sc M.~Caponi, A.~Carbotti, and A.~Maione},
{\em $H$-compactness for nonlocal linear operators in fractional divergence form},
Calc.\ Var.\ Partial Differential Equations {\bf 64} (2025), Paper No.\ 290



\bibitem{CPDMDF}
{\sc V.~Chiadò Piat, G.~Dal Maso, and A.~Defranceschi},
{\em $G$-convergence of monotone operators},
Ann.\ Inst.\ H.\ Poincaré C Anal.\ Non Linéaire {\bf 7} (1990), 123--160

\bibitem{ComiStefani19}
{\sc G.~E.~Comi and G.~Stefani},
{\em A distributional approach to fractional Sobolev spaces and fractional variation: existence of blow-up}, 
J.\ Funct.\ Anal.\ {\bf 277} (2019), 3373--3435

\bibitem{ComiStefani22}
{\sc G.~E.~Comi and G.~Stefani},
{\em Leibniz rules and Gauss-Green formulas in distributional fractional spaces},
J.\ Math.\ Anal.\ Appl.\ {\bf 514} (2022), Paper No.\ 126312

\bibitem{ComiStefani23}
{\sc G.~E.~Comi and G.~Stefani},
{\em A distributional approach to fractional Sobolev spaces and fractional variation: asymptotics I}, 
Rev.\ Mat.\ Complut.\ {\bf 36} (2023), 491--569

\bibitem{CuKrSc23}
{\sc J.~Cueto, C.~Kreisbeck, and H.~Schönberger},
{\em A variational theory for integral functionals involving finite-horizon fractional gradients},
Fract.\ Calc.\ Appl.\ Anal.\ {\bf 26} (2023), 2001--2056


\bibitem{DalMaso}
{\sc G.~Dal Maso},
An introduction to $\Gamma$-convergence, 
Progr.\ Nonlinear Differential Equations Appl., 8
Birkhäuser Boston, Inc., Boston, MA, 1993

\bibitem{DASC}
{\sc R.~De Arcangelis and F.~Serra Cassano},
{\em On the convergence of solutions of degenerate elliptic equations in divergence form},
Ann.\ Mat.\ Pura Appl. {\bf 167} (1994), 1--23

\bibitem{degiorgi2}
{\sc E.~De Giorgi},
{\em $\Gamma$-convergenza e $G$-convergenza},
Boll.\ Un.\ Mat.\ Ital.\ A {\bf 14} (1977), 213--220

\bibitem{degiorgi-fran} 
{\sc E.~De Giorgi and T.~Franzoni},
{\em Su un tipo di convergenza variazionale}, 
Atti Accad.\ Naz.\ Lincei Rend.\ Cl.\ Sci.\ Fis.\ Mat.\ Nat.\ {\bf 58} (1975), 842--850

\bibitem{DGS}
{\sc E.~De Giorgi and S.~Spagnolo},
{\em Sulla convergenza degli integrali dell’energia per operatori ellittici del secondo ordine},
Boll.\ Un.\ Mat.\ Ital.\ {\bf 8} (1973), 391--411

\bibitem{EG}
{\sc L.~C.~Evans and R.~F.~Gariepy},
Measure theory and fine properties of functions,
Revised edition.
Textb.\ Math.\
CRC Press, Boca Raton, FL, 2015

\bibitem{FBRS17}
{\sc J.~Fernández Bonder, A.~Ritorto, and A.~M.~Salort}, 
{\em $H$-convergence result for nonlocal elliptic-type problems via Tartar’s method},
SIAM J.\ Math.\ Anal.\ {\bf 49} (2017), 2387--2408

\bibitem{FM86}
{\sc N.~Fusco and G.~Moscariello},
{\em Further results on the homogenization of quasilinear operators}
Ricerche Mat.\ {\bf 35} (1986), 231--246

\bibitem{FM87}
{\sc N.~Fusco and G.~Moscariello},
{\em On the homogenization of quasilinear divergence structure operators},
Ann.\ Mat.\ Pura Appl.\ {\bf 146} (1987), 1--13


\bibitem{KrSc22}
{\sc C.~Kreisbeck and H.~Schönberger},
{\em Quasiconvexity in the fractional calculus of variations: characterization of lower semicontinuity and relaxation},
Nonlinear Anal.\ {\bf 215} (2022), Paper No.\ 112625



\bibitem{Maione}
{\sc A.~Maione},
{\em $H$-convergence for equations depending on monotone operators in Carnot groups},
Electron.\ J.\ Differential Equations {\bf 2021} (2021), Paper No.\ 13

\bibitem{MPV}
{\sc A.~Maione, F.~Paronetto, and E.~Vecchi},
{\em $G$-convergence of elliptic and parabolic operators depending on vector fields},
ESAIM Control Optim.\ Calc.\ Var.\ {\bf 29} (2023), Paper No.\ 8

\bibitem{MuratI}
{\sc F.~Murat},
{\em Compacité par compensation},
Ann.\ Scuola Norm.\ Sup.\ Pisa Cl.\ Sci.\ {\bf 5} (1978), 489--507



\bibitem{ShiehSpector15}
{\sc T.-T.~Shieh and D.~E.~Spector},
{\em On a new class of fractional partial differential equations},
Adv.\ Calc.\ Var.\ {\bf 8} (2015), 321--336

\bibitem{ShiehSpector18}
{\sc T.-T.~Shieh and D.~E.~Spector},
{\em On a new class of fractional partial differential equations II},
Adv.\ Calc.\ Var.\ {\bf 11} (2018), 289--307

\bibitem{simon}
{\sc J.~Simon},
{\em Régularité de la solution d'une équation non linéaire dans $\R^N$},
In: P. Bénilan, J. Robert (eds) {\em Journées d’Analyse Non Linéaire}. Lecture Notes in Mathematics, vol 665,
Springer, Berlin, Heidelberg


\bibitem{Spagnolo}
{\sc S.~Spagnolo},
{\em Sul limite delle soluzioni di problemi di Cauchy relativi all’equazione del calore},
Ann.\ Scuola Norm.\ Sup.\ Pisa Cl.\ Sci.\ {\bf 21} (1967), 657--699

\bibitem{Spagnolo67}
{\sc S.~Spagnolo},
{\em Una caratterizzazione degli operatori differenziali autoaggiunti del $2\sp{\circ }$ ordine a coefficienti misurabili e limitati},
Rend.\ Sem.\ Mat.\ Univ.\ Padova {\bf 39} (1967), 56--64

\bibitem{Spagnolo68}
{\sc S.~Spagnolo},
{\em Sulla convergenza di soluzioni di equazioni paraboliche ed ellittiche},
Ann.\ Scuola Norm.\ Sup.\ Pisa {\bf 22} (1968), 571--597; errata: {\bf 22} (1968), 673

\bibitem{Stefani}
{\sc G.~Stefani},
{\em On blow-ups of sets with finite fractional variation}, 
J.\ Geom.\ Anal.\ {\bf 36} (2026), Paper No.\ 223



\bibitem{Tartar0}
{\sc L.~Tartar},
{\em Convergence d'operateurs defferentiels},
Proceedings of the Meeting ``Analisi convessa e Applicazioni'', Roma (1974).

\bibitem{tartar-murat}
{\sc L.~Tartar},
{\em Cours Peccot}, {Coll\`ege de France}, (1977).
Partially written in: {\sc F.~Murat},
{\em $H$-convergence - S\'eminaire d'Analyse Fonctionnelle et Num\'erique},
Universit\'e\ d'Alger, 1977-78. English translation:
{\sc F.~Murat and L.~Tartar}, {\it H-Convergence}, in {\it Topics in the Mathematical Modelling of Composite Materials} (A. Cherkaev, R. Kohn, editors), Birkh\"auser, Boston, 1997, 21--43.

\bibitem{Tartar}
{\sc L.~Tartar},
The general theory of homogenization. A personalised introduction. 
Lecture Notes of the Unione Matematica Italiana 7. 
Springer, Berlin, 2009.

\bibitem{Waurick}
{\sc M.~Waurick},
{\em Nonlocal $H$-convergence},
Calc.\ Var.\ Partial Differential Equations {\bf 57} (2018), Paper No.\ 159

 \bibitem{Waurick2}
{\sc M.~Waurick},
{\em Nonlocal $H$-convergence for topologically nontrivial domains}, 
J.\ Funct.\ Anal.\ {\bf 288} (2025), Paper No.\ 110710

\bibitem{Zeidler3}
{\sc E.~Zeidler},
Nonlinear functional analysis and its applications. III,
Springer-Verlag, New York, 1985


\bibitem{Silhavy20}
{\sc M.~Šilhavý},
{\em Fractional  vector analysis  based on invariance requirements (critique of coordinate approaches)},
Contin.\ Mech.\ Thermodyn.\ {\bf 32} (2020), 207--228



\end{thebibliography}
\end{document}